\title%
  [Odd symmetry of nodal solutions of Choquard]%
  {Odd symmetry of least energy nodal solutions for the Choquard equation}
\author{David Ruiz}
  \address{%
    Universidad de Granada\\
    Departamento de An\'alisis Matem\'atico\\
    Campus Fuentenueva\\
    18071 Granada\\
    Spain}
  \email{daruiz@ugr.es}
\author{Jean Van Schaftingen}
  \address{%
    Universit\'e Catholique de Louvain\\
    Institut de Recherche en Math\'ematique et Phy\-sique\\
    Chemin du Cyclotron 2 bte L7.01.01\\
    1348 Louvain-la-Neuve \\
    Belgium}
  \email{Jean.VanSchaftingen@UCLouvain.be}
\thanks{D.\thinspace R. was supported by the FEDER-MINECO Grant MTM2015-68210-P and by J.
Andalucia (FQM-116).
J.\thinspace V.\thinspace S. was supported by the Projet de Recherche (Fonds de la Recherche Scientifique--FNRS) T.1110.14 ``Existence and asymptotic behavior of solutions to systems of semilinear elliptic partial differential equations''. }
\newcommand{\Rset}{\mathbb{R}}
\newcommand{\R}{\mathbb{R}}
\newcommand{\Nset}{\mathbb{N}}
\newcommand{\RN}{\mathbb{R}^N}
\newcommand{\abs}[1]{\lvert #1 \rvert}
\newcommand{\bigabs}[1]{\bigl\lvert #1 \bigr\rvert}
\newcommand{\Bigabs}[1]{\Bigl\lvert #1 \Bigr\rvert}
\newcommand{\norm}[1]{\lVert #1 \rVert}
\newcommand{\dualprod}[2]{\langle #1, #2 \rangle}
\newcommand{\scalprod}[2]{({#1} \vert {#2})}
\newcommand{\st}{\;\vert\;}
\newcommand{\dif}{\,\mathrm{d}}
\newcommand{\gst}{\mathrm{gst}}
\newcommand{\nod}{\mathrm{nod}}
\theoremstyle{plain}
\newtheorem{proposition}{Proposition}[section]
\newtheorem{lemma}[proposition]{Lemma}
\newtheorem{theorem}{Theorem}
\theoremstyle{definition}
\newtheorem{remark}{Remark}[section]
\numberwithin{equation}{section}
\theoremstyle{plain}
\newtheorem{claim}{Claim}
\newcommand{\resetclaim}{\setcounter{claim}{0}}
\newenvironment{proofclaim}
  [1]
  [Proof of the claim]
  {\begin{proof}[#1]}
  {\end{proof}}
\theoremstyle{plain}
\newtheorem{step}{Step}
\newcommand{\resetstep}{\setcounter{step}{0}}
\begin{document}
\begin{abstract}
We consider the Choquard equation (also known as
stationary Hartree equation or Schr\"odinger--Newton equation)
\[
 -\Delta u + u = (I_\alpha \ast \abs{u}^p) \abs{u}^{p - 2}u.
\]
Here $I_\alpha$ stands for the Riesz potential of order $\alpha \in (0,N)$, and \(\frac{N - 2}{N + \alpha} < \frac{1}{p} \le \frac{1}{2}\). We prove that least energy nodal solutions have an odd symmetry with respect to a hyperplane when \(\alpha \) is either close to \(0\) or close
to \(N\).
\end{abstract}

\maketitle

\section{Introduction}

In this paper we are interested in the Choquard equation (also known as
stationary Hartree equation or Schr\"odinger--Newton equation)
\begin{equation}
\label{eqChoquard}
 -\Delta u + u = (I_\alpha \ast \abs{u}^p) \abs{u}^{p - 2}u,
\end{equation}
where \(I_\alpha : \Rset^N \to \Rset\) denotes the Riesz potential,
which is defined for each \(x \in \Rset^N \setminus \{0\}\) by
\begin{align}
\label{eqRiesz}
  I_\alpha (x) = \frac{A_\alpha}{\abs{x}^{N - \alpha}}, \
  \text{ where } \
  A_\alpha = \frac{\Gamma(\tfrac{N-\alpha}{2})}
		    {\Gamma(\tfrac{\alpha}{2})\pi^{N/2}2^{\alpha} }.
\end{align}
Problem \eqref{eqChoquard} is the Euler--Lagrange equation of the \emph{Choquard action functional} \(J_\alpha : H^1(\Rset^N) \to \Rset\) which is defined for each \(u\) in the Sobolev space \(H^1 (\Rset^N)\) by
\begin{equation}%
  \label{thefunctional}%
  J_{\alpha}(u)
  = \frac{1}{2} \int_{\Rset^N}  \big ( \abs{\nabla u}^2 + \abs{u}^2 \big )
  - \frac{1}{2p} \int_{\Rset^N} \bigl(I_{\alpha} \ast \abs{u}^p \bigr) \abs{u}^{p}.
\end{equation}

When \(N = 3\), \(\alpha = 2\) and \(p = 2\), the equation \eqref{eqChoquard} arises in Pekar's model of the polaron \citelist{\cite{Pekar1954}\cite{Lieb1977}}. It has also appeared by introducing classical Newtonian gravitation in quantum physics \citelist{\cite{Diosi1984}\cite{Jones1995}\cite{Penrose1996}}.
The Choquard equation has been the object of many mathematical works (see \cite{MVSReview}).

The existence of groundstate solutions (or least energy solutions) is quite well-known, see \citelist{\cite{Lieb1977}\cite{Lions1980}\cite{Menzala1980}\cite{Tod-Moroz-1999}\cite{Moroz-Penrose-Tod-1998}\cite{MVSGNLSNE}}. Those solutions are positive and radially symmetric. The uniqueness is known in some cases (see for instance \cite{Lieb1977}).
It is also well-known that problem \eqref{eqChoquard} admits sign-changing solutions with various symmetries
\citelist{\cite{CingolaniClappSecchi2012}\cite{CingolaniSecchi}\cite{ClappSalazar}\cite{Weth2001}*{Theorem 9.5}}.

Recently, other type of sign-changing solutions have been found for the Choquard equation. If \(\frac{N - 2}{N + \alpha} < \frac{1}{p} < \frac{N}{N + \alpha}\) there exist solutions which are odd with respect to a hyperplane of \(\Rset^N\), see \cite{GhimentiVanSchaftingen}. Those solutions have minimal energy among all solutions with that symmetry. Furthermore, there are also nodal solutions which minimize the energy in the so-called Nehari nodal set in the case \(\frac{N - 2}{N + \alpha} < \frac{1}{p} \le \frac{1}{2}\) (see \citelist{\cite{GhimentiVanSchaftingen}\cite{GhimentiMorozVanSchaftingen}}), which will be called \emph{least energy nodal solutions}. We point out that, in both cases, those solutions do not have a counterpart in the framework of the usual stationary nonlinear Schr\"{o}dinger equation.

At this point, it is quite reasonable to ask whether those solutions coincide; in other words, \emph{whether the least energy nodal solutions are odd-symmetric with respect to a hyperplane}. The aim of this work is to give an affirmative answer to that question, if the order \(\alpha\) of the Riesz potential is either close to \(0\) or close to \(N\).

We first state the result for \(\alpha\) close to \(0\).

\begin{theorem}
\label{theoremMain0}
If \( 1 - \frac{2}{N} < \frac{1}{p} \leq \frac{1}{2}\),
then there exists \(\alpha_\star \in (0, N)\) such that for any $\alpha \in (0,\alpha_\star)$,
any least energy nodal solution \(u_{\alpha} \in H^1 (\Rset^N)\) of the Choquard equation \eqref{eqChoquard} is odd with respect to a hyperplane of \(\Rset^N\).
\end{theorem}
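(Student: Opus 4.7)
The plan is to argue by contradiction, exploiting that as $\alpha \to 0^+$ the Riesz kernel $I_\alpha$ collapses to a Dirac mass (in the sense that $I_\alpha \ast f \to f$ in suitable Lebesgue spaces), so that the Choquard equation \eqref{eqChoquard} degenerates into the local nonlinear Schr\"odinger equation
\[
  -\Delta u + u = \abs{u}^{2p-2} u,
\]
which is subcritical under the assumption $\tfrac{1}{p} > 1 - \tfrac{2}{N}$. Denote by $U_0$ the positive radial groundstate of this limit equation and by $c_0^{\gst}$ its energy. Suppose, for contradiction, that there exist $\alpha_n \to 0^+$ and least energy nodal solutions $u_n$ of \eqref{eqChoquard} at $\alpha = \alpha_n$ that are not odd with respect to any hyperplane.

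The first step is the sharp asymptotic $c_{\alpha_n}^{\nod} \to 2 c_0^{\gst}$. The upper bound is obtained by testing the nodal Nehari problem with two copies of $U_0$ of opposite sign, placed at large mutual distance and then projected onto the Nehari nodal set of $J_{\alpha_n}$. The lower bound decomposes $u_n = u_n^+ - u_n^-$, writes the nodal Nehari identity on each sign part, and uses the convergence $c^{\gst}_\alpha \to c_0^{\gst}$ of the positive groundstate levels of \eqref{eqChoquard}. The Nehari identity for $u_n$ itself provides a uniform $H^1$-bound; then a concentration-compactness analysis adapted to the degenerating nonlocality yields, after translations and up to a subsequence, a two-bubble decomposition
\[
  u_n = U_0(\cdot - x_n^+) - U_0(\cdot - x_n^-) + r_n,
\]
with $\norm{r_n}_{H^1} \to 0$ and $\abs{x_n^+ - x_n^-} \to \infty$. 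Single- and multi-bubble alternatives are excluded by this sharp energy asymptotic together with the sign change of $u_n$.

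The correction to $J_{\alpha_n}(u_n)$ beyond $2 c_0^{\gst}$ is driven, to leading order, by the attractive cross interaction
\[
  -\frac{1}{p} \int_{\RN}\int_{\RN} I_{\alpha_n}(x-z)\, U_0(x - x_n^+)^p\, U_0(z - x_n^-)^p \dif x \dif z,
\]
which depends on the separation $x_n^+ - x_n^-$ only through its modulus. Let $H_n$ be the perpendicular bisector of the segment $[x_n^-, x_n^+]$ and $\sigma_n$ the reflection across $H_n$. Since $U_0$ is radial and $x_n^-$ is the reflection of $x_n^+$ across $H_n$, the ansatz $U_0(\cdot - x_n^+) - U_0(\cdot - x_n^-)$ is odd with respect to $H_n$; hence $u_n$ is \emph{asymptotically} odd, and any non-oddness is concentrated in $r_n$. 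To conclude I would either antisymmetrize, showing that the function $v_n = \tfrac{1}{2}(u_n - u_n \circ \sigma_n)$, once projected onto the Nehari nodal set, satisfies $J_{\alpha_n}(v_n) \leq J_{\alpha_n}(u_n)$ with strict inequality unless $u_n$ is already odd with respect to $H_n$, or carry out a Lyapunov--Schmidt reduction that exploits the $O(N-1)$ symmetry of the odd-antipodal configuration, reducing the problem to a finite-dimensional parameter space where the minimum is attained at the symmetric configuration.

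I expect the main obstacle to be this final step, since the nonlocal Choquard interaction admits no pointwise antisymmetric rearrangement inequality: the comparison $J_{\alpha_n}(v_n) \leq J_{\alpha_n}(u_n)$ must be obtained from a quantitative second-order expansion around the asymptotically odd configuration, using the nondegeneracy of the linearized NLS operator at $U_0$. The two-bubble profile decomposition in the second step is also delicate, as the Brezis--Lieb-type tools for the Choquard term in \cite{GhimentiMorozVanSchaftingen} must be adapted to the collapsing kernel $I_{\alpha_n}$ as $\alpha_n \to 0^+$.
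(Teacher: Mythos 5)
Your high-level skeleton --- contradiction argument as $\alpha \to 0$, energy asymptotics $c_\alpha^\nod \to 2\gamma_{2p}$, two-bubble decomposition $u_\alpha \approx U(\cdot - x_\alpha^+) - U(\cdot - x_\alpha^-)$ with $\abs{x_\alpha^+ - x_\alpha^-}\to\infty$, reflection across the perpendicular bisector, and invoking nondegeneracy of the NLS groundstate --- matches the paper. The paper's Proposition~\ref{puf} establishes the decomposition via a different but equivalent route (uniform approximation of the Riesz energy from Lemma~\ref{convolution0} shows that $(u_\alpha)$ is a Palais--Smale sequence for the limit functional $\Phi_{2p}$, then Lemma~\ref{PS} gives the two-bubble profile; the strict inequality $c_\alpha^\nod < 2c_\alpha^\gst$ from Proposition~\ref{propositionControlGroundNodal} plus Claims~\ref{claim0Gst}--\ref{claimTheorem0SignChanges} rule out the other alternatives). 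That part of your plan, while less explicit, is workable.

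The genuine gap is the final step, and you have correctly diagnosed it yourself: the antisymmetrization comparison $J_{\alpha}\bigl(\tfrac12(u-u\circ\sigma)\bigr) \le J_\alpha(u)$ does not follow for the nonlocal Choquard energy, and your second alternative (a Lyapunov--Schmidt reduction with a ``quantitative second-order expansion'') is sketched but not executed. The observation that makes the paper's argument work is simpler and sharper than either: by translation and reflection invariance of \eqref{eqChoquard}, \emph{both} $u_\alpha$ and $-u_\alpha \circ R_\alpha$ are exact solutions, so the even part $v_\alpha = u_\alpha + u_\alpha \circ R_\alpha$ satisfies an \emph{exact} linear equation $\mathcal{L}_\alpha v_\alpha = 0$, with $\mathcal{L}_\alpha$ a nonlocal Schr\"odinger-type operator built from the difference of the two equations. (Note the paper's $v_\alpha$ is the \emph{even} part measuring non-oddness, not your odd part $\tfrac12(u - u\circ\sigma)$.) Moreover, the optimal choice of $\xi_\alpha^\pm$ minimizing $\norm{u_\alpha - (U(\cdot - \xi^+) - U(\cdot - \xi^-))}_{H^1}$ gives, by the first-order conditions, the orthogonality $\scalprod{v_\alpha}{\partial_\zeta U}_{H^1} = 0$ for every direction $\zeta$, which is precisely what kills the kernel of the linearized NLS operator. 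Normalizing $w_\alpha = v_\alpha/\norm{v_\alpha}_{H^1}$, passing to a weak limit $w$, showing $w$ solves $-\Delta w + w = (2p-1)U^{2p-2}w$ (via the uniform convolution estimates), and using nondegeneracy plus orthogonality to conclude $w = 0$, one then tests the equation $\mathcal{L}_{\alpha_n}v_{\alpha_n} = 0$ against $v_{\alpha_n}$ itself: the right-hand side tends to $2(2p-1)\int U^{2p-2}w_n^2 \to 0$ by Rellich and the decay of $U$, while the left-hand side is identically $1$, a contradiction. This is a direct blow-up/nondegeneracy argument and sidesteps entirely the energy comparison and the finite-dimensional reduction you were contemplating; it is the missing ingredient needed to close your proof.
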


By odd, we mean that there exists a reflection \(R : \Rset^N \to \Rset^N\) of the Euclidean space \(\Rset^N\) with respect to an affine hyperplane of \(\Rset^N\) such that \(u \circ R = - u\) in \(\Rset^N\).

\medskip

For the case where $\alpha$ is close to $N$ our result is the following.

\begin{theorem}%
\label{theoremMainN}
If \(\frac{1}{2} - \frac{1}{N} < \frac{1}{p} < \frac{1}{2}\),
then there exists  $\alpha^\star \in (0, \ N)$
such that for any \(\alpha \in (\alpha^\star, N)\),
any least energy nodal solution $u_{\alpha} \in H^1 (\Rset^N)$ of the Choquard equation \eqref{eqChoquard} is odd with respect to a hyperplane.
\end{theorem}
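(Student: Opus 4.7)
My plan is to argue by contradiction: I would suppose there exists a sequence $\alpha_n \to N^-$ and corresponding least energy nodal solutions $u_n$ that are not odd with respect to any hyperplane, and derive a contradiction from a refined expansion of the Choquard functional in powers of $N-\alpha_n$ combined with a symmetrization argument.

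The first step is to expand the Riesz interaction. Since $A_\alpha \sim c_N/(N-\alpha)$ as $\alpha \to N$ and $|x-y|^{-(N-\alpha)} = 1 - (N-\alpha)\log|x-y| + O((N-\alpha)^2)$ locally uniformly, the nonlocal term formally admits the expansion
\[
  \int_{\R^N}(I_\alpha \ast |u|^p)|u|^p = \frac{C_N}{N-\alpha}\Big(\int|u|^p\Big)^{\!2} - C_N\iint|u(x)|^p|u(y)|^p\log|x-y|\,dx\,dy + O(N-\alpha).
\]
After the natural amplitude rescaling $v(x) = (N-\alpha)^{-1/(2p-2)} u(x)$, which makes the singular $O(1/(N-\alpha))$ contribution balance the Sobolev norm, the rescaled Choquard functional converges to the local limit functional $\tilde J(v) = \tfrac12\|v\|_{H^1}^2 - \tfrac{c_N'}{2p}(\int|v|^p)^2$. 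The positive Nehari minimizers of $\tilde J$ are rescalings of the positive NLS groundstate $W$ solving $-\Delta W + W = W^{p-1}$, and on the Nehari nodal set of $\tilde J$ (using that $p > 2$ in the regime of Theorem \ref{theoremMainN}), a direct computation using the disjoint-support structure and the two Nehari constraints shows that minimizers take the form $\lambda(W(\cdot - y_1) - W(\cdot - y_2))$ for \emph{any} distinct $y_1, y_2 \in \R^N$: the leading limit functional does not detect the relative geometry of the two lobes.

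Since the geometry is not fixed at leading order, the second step is a concentration-compactness analysis of $u_n^\pm$, in the spirit of the existence proof in \cite{GhimentiMorozVanSchaftingen}, to show that after the above rescaling and translation by suitable centers $y_n^+, y_n^-$, both rescaled profiles converge strongly in $H^1(\R^N)$ to the common limit $W$ (up to sign), and the energy of $u_n$ saturates the leading limit level. The positions of the lobes are then selected by the next-to-leading $O(1)$ contribution governed by the cross logarithmic interaction
\[
  \mathcal{L}(f,g) = \iint f(x) g(y) \log|x-y|\,dx\,dy
\]
applied to $f = (u_n^+)^p$ and $g = (u_n^-)^p$; with the sign convention dictated by the expansion, smaller $\mathcal L$ corresponds to smaller $J_{\alpha_n}$, and minimizing energy tends to press the positive and negative lobes against each other along a common interface rather than to separate them.

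The final step is the symmetrization. Let $R_n$ denote the reflection through the hyperplane perpendicularly bisecting $[y_n^-, y_n^+]$, and let $w_n$ be the Nehari-nodal projection of the odd function $u_n^+ - u_n^+ \circ R_n$. A quantitative rearrangement-type inequality should show that, among pairs $(f,g)$ of nonnegative profiles with disjoint supports, prescribed $L^p$-masses and barycenters at $y_n^\pm$, the mirror-symmetric choice $g = f \circ R_n$ strictly minimizes $\mathcal L(f^p, g^p)$ unless $g$ already coincides with $f \circ R_n$. Combined with the expansion above, this would yield $J_{\alpha_n}(w_n) < J_{\alpha_n}(u_n)$ for $n$ large, contradicting the minimality of $u_n$. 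I expect the hardest step to be the uniform control of all these estimates in $n$: the $O(N-\alpha_n)$ remainder in the Riesz expansion must be tamed even for profiles possibly supported at distances up to $e^{K/(N-\alpha_n)}$, and the Nehari projection introduces perturbations of the $L^p$-masses that must be absorbed into the strict inequality coming from the rearrangement. This uniform control is presumably where the threshold $\alpha^\star$ effectively appears.
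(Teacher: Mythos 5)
Your proposal is genuinely different from the paper's argument, but as outlined it has several gaps that I don't see how to close.

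First, on the shape of the solutions. The paper (Proposition~\ref{pufii}) proves that after the rescaling $\tilde u_\alpha = A_\alpha^{1/(2p-2)}u_\alpha$, the positive and negative parts converge to translates of the ground state $V$ of \eqref{limit1} with centres $\xi_\alpha^\pm$ satisfying $|\xi_\alpha^+ - \xi_\alpha^-| \to \infty$ \emph{and} $|\xi_\alpha^+-\xi_\alpha^-|^{N-\alpha}\to 1$. Thus the two lobes do \emph{not} settle at bounded distance along a common interface; they drift apart, just slowly enough that $(N-\alpha)\log|\xi_\alpha^+ - \xi_\alpha^-| \to 0$. This matters for your second and third steps: when the lobes are at distance $\sim e^{o(1/(N-\alpha))}$ the cross term $\mathcal L(f,g)\approx (\int f)(\int g)\log|y_1-y_2|$ is completely dominated by the translational part, and its dependence on the profile shapes is suppressed by extra inverse powers of the distance; the ``quantitative rearrangement inequality'' you invoke would have to produce a strict improvement of that degenerating order, which I do not believe is available.

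Second, the expansion itself is not uniform at the relevant scales. Your ``$O(N-\alpha)$'' remainder tacitly assumes $\log|x-y|$ is under control, but for the configurations that actually occur this term is $o\bigl(1/(N-\alpha)\bigr)$, i.e.\ potentially unbounded and only one order below the singular term. You acknowledge this at the end, but it is not a peripheral difficulty: it is precisely the obstacle that makes the naive two-term Taylor expansion of the Riesz energy unreliable here. The paper instead works with the \emph{one-sided} bound of Lemma~\ref{lemmaConvEstimate} and the translation-aware limit of Lemma~\ref{lemmaConv} (which encodes the quantity $|\xi|^{N-\alpha}\to\varrho$ rather than a log expansion), exactly because a full two-sided uniform asymptotic is not available.

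Third, and most importantly, the contradiction in your final step needs a \emph{strict} inequality $J_{\alpha_n}(w_n)<J_{\alpha_n}(u_n)$ between two competitors that differ by a quantity tending to zero in $H^1$ (since $u_n^+\circ R_n \approx -u_n^-$ once Proposition~\ref{pufii} is in hand). That is a linearized statement, and the natural tool is the nondegeneracy of the limiting ground state $V$. This is what the paper does: it sets $v_\alpha = \tilde u_\alpha + \tilde u_\alpha\circ R_\alpha$, observes that $v_\alpha$ solves a linear equation $\mathcal L_\alpha v_\alpha=0$ (Step~2), and deduces from the nondegeneracy of \eqref{limit0}/\eqref{limit1} plus the orthogonality conditions coming from the optimal choice of $\xi_\alpha^\pm$ (Step~1) that the normalized $w_n = v_{\alpha_n}/\|v_{\alpha_n}\|$ must converge weakly to $0$, and then derives a contradiction by testing against $v_{\alpha_n}$ itself. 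Nondegeneracy supplies exactly the coercivity your rearrangement inequality was meant to supply, and it does so uniformly in the lobe separation. Without some substitute for that ingredient, I don't see how your expansion-plus-symmetrization scheme can be closed.
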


In general, the hypothesis $\frac{N - 2}{N + \alpha} < \frac{1}{p} < \frac{N}{N + \alpha}$ in necessary by a Pohozhaev-type inequality for the existence of sufficiently regular finite energy solutions to \eqref{eqChoquard} \cite{MVSGNLSNE}*{Theorem~2}. Since Theorems~\ref{theoremMain0} and \ref{theoremMainN} are concerned with the case $\alpha \to 0$ and $\alpha \to N$ respectively, the restrictions on $p$ imposed are the natural limit of these conditions.

The proofs use an argument by contradiction. We study the behavior of least energy nodal solutions $u_\alpha$ of the Choquard equation \eqref{eqChoquard} when either \(\alpha \to 0\) or \(\alpha \to N\). This process leads us naturally to certain limit problems. If $\alpha \to 0$ the limit problem is just a usual stationary nonlinear Schr\"{o}dinger equation, but in the case $\alpha \to N$ the equation includes an additional coefficient depending on the nonlocal quantity
$\Vert u\Vert_{L^p}$.

A crucial ingredient of the proofs is the asymptotics of the Riesz potential energy $ \int_{\R^N} (I_{\alpha} \ast \abs{u}^p) \abs{u}^p$. In the r\'egime \(\alpha \to 0\), the approximation is uniform on bounded sets of the Sobolev space \(H^1 (\Rset^N)\) (see \S~\ref{sectionConcentratingRiesz} below), which suits perfectly in our proofs. When \(\alpha \to N\), the analysis is more delicate, because there is only a unilateral uniform approximation property on bounded sets (see \S~\ref{sectionDelocalizingRiesz}).

We point out that the family $u_\alpha$ \emph{does not converge} to a solution to the limit problem, even up to translations in \(\Rset^N\) and up to the extraction of a subsequence, an issue that makes our proof more involved.
In the proof of Theorem~\ref{theoremMain0} we show that the sequence of solutions actually forms a \emph{Palais--Smale sequence} for the limit equation. As a consequence, our solutions behave asymptotically like differences of two positive solutions of the local problem moving away one from the other.  With this in hand, we use the nondegeneracy of solutions to the local problem to conclude that the solution has an odd symmetry.

For Theorem~\ref{theoremMainN} we need to describe more accurately the solutions, and we prove that the positive and negative parts of $u_\alpha$ converge to a groundstate of the corresponding limit problem.
Moreover we also need to estimate the distance between the two bumps: it is going to infinity but slowly enough to preserve the interaction between the bumps as much as possible.
In contrast with Theorem~\ref{theoremMain0} which still holds for \emph{low-energy nodal solutions} (see Proposition~\ref{puf} below), the proof of Theorem~\ref{theoremMainN} uses essentially the minimizing character of the nodal solutions.


The rest of the paper is organized as follows. In Section~\ref{sectionPreliminaries} we state some known results about groundstate solutions and nodal solutions of the Choquard equation.
We also review properties of the limit problems that we encounter in the proofs.
Section~\ref{sectionAsymptoticRiesz} is devoted to the study of the asymptotic behavior of the Riesz potential when $\alpha$ tends to $0$ or $N$.
Theorems~\ref{theoremMain0} and~\ref{theoremMainN} are proved in Sections~\ref{sectionProofTheoremMain0} and~\ref{sectionProofTheoremMainN}, respectively.

\section{Preliminaries}
\label{sectionPreliminaries}

\subsection{Groundstates and least energy nodal solutions of the Choquard equations}


\medskip

For any \(\alpha \in (0, N)\) and \(p \in (1, \infty)\) such that
\(\frac{N - 2}{N + \alpha}< \frac{1}{p} <\frac{N}{N + \alpha}\), the solutions in \(H^1 (\Rset^N)\) to the Choquard equation \eqref{eqChoquard} correspond to critical points of the energy functional \(J_\alpha\) defined on \(H^1 (\Rset^N)\) by \eqref{thefunctional}.
The Choquard equation \eqref{eqChoquard} has a positive, radially symmetric groundstate solution \(U_\alpha \in H^1 (\Rset^N)\) \citelist{\cite{Lieb1977}\cite{Lions1980}\cite{Menzala1980}\cite{Tod-Moroz-1999}\cite{Moroz-Penrose-Tod-1998}\cite{MVSGNLSNE}} whose energy level will be denoted by $c_\alpha^\gst = J_\alpha(U_\alpha)$.

The groundstate level \(c_\alpha^\gst\) has many different characterizations \cite{MVSGNLSNE}*{\S 2.1};
it can be obtained as a minimum
\[
 c_\alpha^\gst = \inf\,\bigl\{ J_{\alpha}( u) \st u \in \mathcal{N}_\alpha \bigr\},
\]
on the \emph{Nehari manifold} which is defined by
\[
  \mathcal{N}_\alpha
  = \bigl\{ u \in H^1(\R^N) \setminus \{ 0\}
    \st \dualprod{J_{\alpha}'(u)}{u}=0  \bigr\}.
\]
The level \(c_\alpha^\gst\) can be equivalently characterized variationally as a minimax level:
\begin{equation}
\label{eqChoquardGroundstateMinimax}
  c_\alpha^\gst = \inf\,\Bigl\{\max_{t \ge 0} J_{\alpha}(t u) \st u \in H^1(\Rset^N) \setminus \{0\}\Bigr\}.
\end{equation}


\medskip

We now turn our attention to \emph{nodal solutions for the Choquard equation}. As for local problems in bounded domains (see
\citelist{%
  \cite{CeramiSoliminiStruwe1986}%
  \cite{CastroCossioNeuberger1997}%
  \cite{CastorCossioNeuberger1998}%
}) least energy  nodal solutions can be constructed when \(p \ge 2\)  by minimizing the action functional on the Nehari nodal set %
\citelist{\cite{GhimentiMorozVanSchaftingen}%
\cite{GhimentiVanSchaftingen}}:
\[
  c_{\alpha}^{\nod} = \inf\,\bigl\{ J_{\alpha}(u) \st u \in \mathcal{N}_\alpha^{\nod}\bigr\},
\]
where  the \emph{Nehari nodal set} \(\mathcal{N}_\alpha^{nod}\) is defined by
\[
  \mathcal{N}_\alpha^{\nod}
  = \bigl\{ u \in H^1(\R^N)
    \st \dualprod{J_{\alpha}'(u)}{u^+}=0, \ \dualprod{J_{\alpha}'(u)}{u^-}=0,\ u^+\neq 0\text{ and } u^- \neq 0\bigr\}.
\]
This level can also be characterized by
\begin{equation}
  c_{\alpha}^{\nod}
  = \inf\,\Bigl\{\max_{t, s \ge 0} J_{\alpha}(tu^+ + s u^-) \st u\in H^1(\Rset^N), \ u^+ \neq 0 \text{ and } u^- \neq 0\Bigr\}.
\end{equation}
This can be seen as follows (see \cite{GhimentiVanSchaftingen}): if \(u \in H^1 (\Rset^N)\), and if \(u^+ \ne 0\) and \(u^- \ne 0\),
then for every \(\sigma, \tau \in [0, \infty)\), we have
\begin{multline*}
\label{expansion}
  J_{\alpha}(\tau^{1/p} u^+ + \sigma^{1/p} u^-) \\
  = \frac{\tau^{2/p}}{2} \int_{\Rset^N} \abs{\nabla u^+}^2 + \abs{u^+}^2
    + \frac{\sigma^{2/p}}{2} \int_{\Rset^N} \abs{\nabla u^-}^2 + \abs{u^-}^2 \\
  - \frac{1}{2p} \int_{\Rset^N} \bigl(I_\alpha \ast (\tau \abs{u^+}^p + \sigma \abs{u^-}^p) \bigr) \bigl(\tau \abs{u^+}^p + \sigma \abs{u^-}^p\bigr);
\end{multline*}
the right-hand side is a strictly concave function in the variables $\sigma$, $\tau$ (see \cite{LiebLoss2001}*{Theorem 9.8}) and achieves its maximum at a unique point \((\sigma, \tau) \in (0, \infty)^2\). If \(u \in \mathcal{N}_\alpha^\nod\), then \((\tau, \sigma) = (1, 1)\) is a critical point and the conclusion thus follows.

The level \(c_{\alpha}^\nod\) can be estimated by the groundstate level \(c_\alpha^\gst\)
\cite{GhimentiVanSchaftingen}.


\begin{proposition}
\label{propositionControlGroundNodal}
If \(p \ge 2\) and \(((N - 2)p - N)_+ < \alpha < N\), then
\[
  c_{\alpha}^{\nod} < 2 c_\alpha^\gst.
\]
\end{proposition}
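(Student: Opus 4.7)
The plan is to feed the variational characterization of \(c_\alpha^\nod\) an odd-symmetric candidate built from two translates of the groundstate \(U_\alpha\) placed far apart, and to extract the strict inequality from the sign of the leading Riesz interaction. Fix a unit vector \(e \in \Rset^N\) and, for \(r > 0\), set \(U_1 := U_\alpha(\cdot - re)\), \(U_2 := U_\alpha(\cdot + re)\) and \(u_r := U_1 - U_2\). Since \(U_\alpha\) is positive, radial and strictly radially decreasing, \(u_r\) is positive on \(\{x \cdot e > 0\}\) and negative on its complement, so \(u_r^{\pm} \not\equiv 0\); reflection through the hyperplane \(\{x \cdot e = 0\}\) exchanges \(u_r^+\) with \(u_r^-\), and combined with the strict concavity of \((\tau,\sigma)\mapsto J_\alpha(\tau^{1/p}u_r^+ + \sigma^{1/p}u_r^-)\) recalled in the paragraph preceding the proposition, this symmetry forces the maximizer to satisfy \(t = s\). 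The variational characterization therefore yields
\[
c_\alpha^\nod \;\le\; \max_{t \ge 0} J_\alpha(t u_r).
\]

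I would then compute the asymptotics of the relevant quantities as \(r \to \infty\). Writing \(A_0 := \norm{U_\alpha}_{H^1}^2\), the Nehari identity for \(U_\alpha\) gives \(A_0 = \int_\RN (I_\alpha \ast \abs{U_\alpha}^p)\abs{U_\alpha}^p\). The Sobolev norm expands as \(\norm{u_r}_{H^1}^2 = 2 A_0 - 2(U_1, U_2)_{H^1}\), and the cross term is exponentially small in \(r\) because \(U_\alpha\) decays exponentially (standard subsolution bound applied to the Choquard equation, whose right-hand side is bounded). Since \(u_r\) has constant sign on each half-space, the hypothesis \(p \ge 2\) yields an expansion \(\abs{u_r}^p = U_1^p + U_2^p - R_r\) with \(0 \le R_r \le C_p(U_1^{p-1}U_2 + U_1 U_2^{p-1})\), and \(\int (I_\alpha \ast \phi) R_r\) is exponentially small for every bounded \(\phi\). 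Consequently
\[
B_r := \int_\RN (I_\alpha \ast \abs{u_r}^p) \abs{u_r}^p \;=\; 2 A_0 + 2 \int_\RN (I_\alpha \ast U_1^p) U_2^p + O(e^{-\sigma r})
\]
for some \(\sigma > 0\). By the pointwise asymptotics \(I_\alpha(x - y) \sim A_\alpha \abs{y}^{-(N-\alpha)}\) as \(\abs{y} \to \infty\), the interaction integral is strictly positive and of exact order \(A_\alpha (2r)^{-(N-\alpha)} \norm{U_\alpha}_{L^p}^{2p}\); in particular it decays only polynomially.

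Combining \eqref{eqChoquardGroundstateMinimax} with the Nehari identity gives \(c_\alpha^\gst = \frac{p-1}{2p} A_0\), while a direct one-variable optimisation yields
\[
\max_{t \ge 0} J_\alpha(t u_r) \;=\; \frac{p-1}{2p}\cdot\frac{\norm{u_r}_{H^1}^{2p/(p-1)}}{B_r^{1/(p-1)}}.
\]
The desired estimate \(\max_t J_\alpha(t u_r) < 2 c_\alpha^\gst\) is therefore equivalent to \(\norm{u_r}_{H^1}^{2p} < 2^{p-1} A_0^{p-1} B_r\), and plugging in the two expansions above makes both sides equal to \(2^p A_0^p\) modulo corrections: the right-hand side acquires a strictly positive contribution of order \(r^{-(N-\alpha)}\) from the Riesz interaction, while every other discrepancy is \(O(e^{-\sigma r})\). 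Since polynomial decay beats exponential decay, the inequality holds for all sufficiently large \(r\), yielding \(c_\alpha^\nod < 2 c_\alpha^\gst\).

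The main technical obstacle I anticipate is the bookkeeping of the remainder \(R_r\) inside the double Riesz convolution and the verification of the precise leading constant in \(\int(I_\alpha \ast U_1^p) U_2^p\); both ultimately reduce to the exponential decay of \(U_\alpha\) and to a routine dominated-convergence argument in the spirit of classical one-bump interaction estimates, so no fundamentally new idea is required beyond what is already encoded in the minimax framework.
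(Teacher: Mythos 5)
The paper does not prove Proposition~\ref{propositionControlGroundNodal}; it cites it from Ghimenti and Van Schaftingen and moves on, so there is no internal argument to compare yours against. Judged on its own, your proof is correct and is the classical two-bump interaction argument (almost certainly the one used in the cited reference). You set $u_r = U_\alpha(\cdot - re) - U_\alpha(\cdot + re)$, use its antisymmetry together with the strict concavity of $(\tau,\sigma)\mapsto J_\alpha(\tau^{1/p}u_r^+ + \sigma^{1/p}u_r^-)$ to force the unique maximizer onto the diagonal, so that $c_\alpha^{\nod} \le \max_{t\ge 0}J_\alpha(t u_r) = \tfrac{p-1}{2p}\,\norm{u_r}_{H^1}^{2p/(p-1)}/B_r^{1/(p-1)}$, and then observe that in the equivalent inequality $\norm{u_r}_{H^1}^{2p} < (2A_0)^{p-1}B_r$ every discrepancy except the positive Riesz interaction $\int(I_\alpha\ast U_1^p)U_2^p \sim A_\alpha(2r)^{-(N-\alpha)}\norm{U_\alpha}_{L^p(\Rset^N)}^{2p}$ is exponentially small, so the polynomial term wins for $r$ large. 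The algebra (the value of the one-parameter maximum, the Nehari identity $c_\alpha^\gst = \tfrac{p-1}{2p}A_0$, the expansion of $B_r$) all checks out.

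Two points where you are slightly cavalier but still correct. First, the exponential decay of $U_\alpha$ (and of $\nabla U_\alpha$, which you also need so that the $H^1$ cross term $\scalprod{U_1}{U_2}_{H^1(\Rset^N)}$ is exponentially small) does not follow merely from ``the right-hand side is bounded''; what one needs is that the effective linear potential $(I_\alpha\ast U_\alpha^p)\,U_\alpha^{p-2}$ tends to zero at infinity --- for $p>2$ because $U_\alpha\to 0$, and for $p=2$ because $I_\alpha\ast U_\alpha^2$ itself vanishes at infinity --- and then a comparison argument plus elliptic estimates give the claim; this is the content of the decay asymptotics in Moroz--Van Schaftingen. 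Second, note that $p\ge 2$ is used in two places: in the minimax characterization of $c_\alpha^{\nod}$, which the paper only asserts for $p\ge 2$, and in the convexity bound $0 \le R_r \le C_p(U_1^{p-1}U_2 + U_1U_2^{p-1})$ that controls $\bigabs{\,\abs{u_r}^p - U_1^p - U_2^p\,}$; both uses are legitimate under the stated hypothesis, and the condition $\alpha > ((N-2)p-N)_+$ is what makes the functional and the groundstate $U_\alpha$ well-defined in the first place. With those caveats spelled out, the bookkeeping you anticipate at the end is genuinely routine and the proof goes through.
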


\subsection{Limiting problems}

When \(\alpha \to 0\), the Choquard equation \eqref{eqChoquard} reduces at least formally to
the nonlinear Schr\"odinger equation with exponent \(q = 2 p\)
\begin{equation} %
  \label{limit0}%
  - \Delta u + u = \abs{u}^{q-2}u.
\end{equation}
The latter equation \eqref{limit0} is the Euler--Lagrange equation of the energy functional $\Phi_q: H^1(\R^N) \to \R$ defined for each \(u \in H^1 (\Rset^N)\) by
\begin{equation}
  \label{functional0}%
  \Phi_q (u)
  = \frac{1}{2} \int_{\Rset^N} \bigl( \abs{\nabla u}^2 + \abs{u}^2 \bigr) - \frac{1}{q} \int_{\Rset^N} \abs{u}^{q}.
\end{equation}
Problem \eqref{limit0} has a positive groundstate \(U\) which is \emph{radially symmetric}, \emph{unique up to translations} and \emph{nondegenerate}, that is, any solution $v \in H^1(\R^N)$ to the linearized problem
\[
  - \Delta v + v = (q-1)U^{q-2}v
\]
is a directional derivative of the function \(U\): it can be written \(v = h \cdot \nabla u\), for some constant vector \(h \in \Rset^N\) \citelist{\cite{Kwong1989}\cite{Weinstein1985}\cite{Oh1990}}.

The \emph{groundstate level} $\gamma_{q} = \Phi_q (U)$ has many different variational characterizations.
We will be using the fact that the groundstate solution minimizes
\begin{equation}
  \label{eqSchrodingerGroundstateMinimax}
    \gamma_{q} = \inf\, \Bigl\{ \max_{t \ge 0} \Phi_q (t u) \st u \in H^1(\Rset^N)\setminus \{0\}\Bigr\}.
\end{equation}
Indeed, it can be proved that the above infimum is attained by the groundstate $U$ and $\max_{t \ge 0} \Phi_q (t U)= \Phi_q(U)$.

It is also well known that any other solution $u$ of \eqref{limit0} must change sign and satisfies
\begin{equation}
\label{strictOtherSolutions0}
 \Phi_q (u) > 2 \gamma_q.
\end{equation}

Finally, the
behavior of the Palais--Smale sequences of \eqref{limit0} has been fully described \citelist{\cite{bahri-lions}*{Proposition II.1}\cite{Willem1996}*{Theorem~8.4}\cite{BenciCerami1987}}:

\begin{lemma} \label{PS} If the sequence \((u_n)_{n \in \Nset}\) in \(H^1 (\Rset^N)\) is a Palais--Smale sequence for the functional \(\Phi_q\), that is, if the sequence \((\Phi_q(u_n))_{n \in \Nset}\) is bounded in \(\Rset\) and if the sequence \((\Phi_q'(u_n))_{n \in \Nset}\) converges to \(0\) in \( H^{-1} (\Rset^N)\), then, there exists an integer $m \geq 0$, sequences \((a_n^i)_{n \in \Nset}\) in \(\Rset^N\) for $i= 1, \dotsc, m$), and nonzero solutions $U_i$ of \eqref{limit0} such that, as \(n \to \infty\),
\begin{enumerate}[i)]
\item $ u_n -  \sum_{i =1}^m U_i(\cdot- a_n^i) \to 0$ strongly in \(H^1 (\Rset^N)\),
\item $\Phi_q(u_n) \to \sum_{i =1}^m \Phi_q (U_i)$,
\item $\abs{a_n^i - a_n^j} \to +\infty$ if $i \neq j$.
\end{enumerate}
\end{lemma}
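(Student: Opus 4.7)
The plan is to apply the standard concentration-compactness / profile-decomposition scheme, extracting bumps one at a time until only a vanishing remainder is left, and then showing the process must terminate because each extracted bump contributes a definite amount of energy.

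First I would establish boundedness of $(u_n)$ in $H^1(\R^N)$. Combining $\Phi_q(u_n) = O(1)$ with $\langle \Phi_q'(u_n), u_n \rangle = o(\|u_n\|_{H^1})$ and the identity
\[
  \Phi_q(u_n) - \tfrac{1}{2} \dualprod{\Phi_q'(u_n)}{u_n}
  = \bigl(\tfrac{1}{2} - \tfrac{1}{q}\bigr) \int_{\R^N} \abs{u_n}^q,
\]
one controls $\|u_n\|_{L^q}$ and then the full $H^1$ norm. Passing to a subsequence I may assume $u_n \rightharpoonup U_0$ weakly in $H^1$ and $u_n \to U_0$ a.e.; the convergence $\Phi_q'(u_n) \to 0$ in $H^{-1}$ together with weak continuity of the Nemytskii operator $v \mapsto \abs{v}^{q-2}v$ from $H^1$ into $H^{-1}$ (via Rellich on balls and a diagonal extraction) implies that $U_0$ solves \eqref{limit0}.

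Next I would set $w_n^{(1)} = u_n - U_0$ and apply a Brezis--Lieb-type splitting to check that $(w_n^{(1)})$ is again a Palais--Smale sequence and that $\Phi_q(w_n^{(1)}) = \Phi_q(u_n) - \Phi_q(U_0) + o(1)$. Here one should dichotomize using Lions' lemma: either $\sup_{y \in \R^N} \int_{B(y,1)} \abs{w_n^{(1)}}^2 \to 0$, in which case $w_n^{(1)} \to 0$ in $L^q$, and testing $\Phi_q'(w_n^{(1)})$ against $w_n^{(1)}$ forces $w_n^{(1)} \to 0$ in $H^1$ and the process terminates; or else there exist $a_n^{1} \in \R^N$ such that $w_n^{(1)}(\,\cdot\, + a_n^{1})$ has a nontrivial weak limit $U_1$, which by translation invariance is again a solution of \eqref{limit0}. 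Iterating on $w_n^{(2)} = w_n^{(1)} - U_1(\,\cdot\, - a_n^{1})$, etc., produces the sequence of bumps.

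The termination of the iteration is the main bookkeeping step: because every nonzero solution of \eqref{limit0} satisfies $\Phi_q(U_i) \ge \gamma_q > 0$ (indeed, since the identity $\dualprod{\Phi_q'(U_i)}{U_i}=0$ makes $U_i$ lie on the Nehari manifold, we have $\Phi_q(U_i) = (\tfrac{1}{2}-\tfrac{1}{q})\|U_i\|_{L^q}^q$ and the $L^q$-norm is bounded below by the Gagliardo--Nirenberg constant), the energy identity $\Phi_q(u_n) = \sum_{i=0}^m \Phi_q(U_i) + \Phi_q(w_n^{(m+1)}) + o(1)$ forces $m$ to be finite. It remains to verify $\abs{a_n^i - a_n^j} \to \infty$ for $i \ne j$: if some subsequence of $a_n^i - a_n^j$ stayed bounded, then $U_j$ would coincide with a translate of the residual after subtracting $U_i$, which contradicts the way $U_j$ was obtained as a weak limit after removing $U_i(\,\cdot\, - a_n^i)$ (along the corresponding subsequence the relevant weak limit would be $0$). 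The strong $H^1$ convergence $u_n - \sum_{i=1}^m U_i(\,\cdot\, - a_n^i) \to 0$ (absorbing $U_0$ as the $i=1$ term with $a_n^1 = 0$ when $U_0 \ne 0$) and the energy decomposition follow.

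The main obstacle, and the only delicate point, is making the Brezis--Lieb-type splitting rigorous for the \emph{nonlocal quantity} $\dualprod{\Phi_q'(w_n^{(k)})}{\,\cdot\,}$ when the bumps drift to infinity; here one relies on the pointwise a.e.\ convergence after translation together with the uniform $L^q$ bounds, handling the cross terms by a cut-off argument and dominated convergence.
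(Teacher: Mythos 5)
The paper does not prove Lemma~\ref{PS}; it invokes it as a classical structure theorem for Palais--Smale sequences of autonomous subcritical NLS functionals and cites Bahri--Lions, Willem (Theorem~8.4) and Benci--Cerami. Your proposal reproduces the standard concentration-compactness / profile-decomposition argument found in exactly those references, and the outline is correct: boundedness from $\Phi_q(u_n)-\tfrac12\dualprod{\Phi_q'(u_n)}{u_n}$, iterated extraction of translated weak-limit profiles, Brezis--Lieb splitting of the energy and of the PS property, Lions' vanishing dichotomy to terminate the iteration, and energy quantization $\Phi_q(U_i)\ge\gamma_q>0$ because every nonzero solution lies on the Nehari manifold and Sobolev's inequality bounds it away from $0$.

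Two formulations merit tightening, although neither creates a gap. First, the Nemytskii map $v\mapsto\abs{v}^{q-2}v$ is \emph{not} weakly continuous from $H^1$ into $H^{-1}$; what you actually need (and what your parenthetical correctly points to) is only that for $u_n\rightharpoonup u$ in $H^1$ and $u_n\to u$ a.e.\ one has $\int\abs{u_n}^{q-2}u_n\varphi\to\int\abs{u}^{q-2}u\varphi$ for each $\varphi\in C_c^\infty(\R^N)$, which follows from Rellich on balls together with the uniform $L^q$ bound. Second, the cleanest way to see $\abs{a_n^i-a_n^j}\to\infty$ is to record that, by construction, $w_n^{(i+1)}(\cdot+a_n^i)\rightharpoonup 0$; since weak limits are stable under convergent translations, a bounded subsequence of $a_n^{j}-a_n^{i}$ (for $j>i$) would force $U_{j}=\mathop{\mathrm{w\text{-}lim}}w_n^{(j)}(\cdot+a_n^{j})=0$, a contradiction. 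This is what your last paragraph asserts, just phrased a bit loosely. With these clarifications the proposal is a correct (if necessarily compressed) rendition of the standard proof.
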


\medskip

In the study of the Choquard equation \eqref{eqChoquard} for
\(\alpha\) close to \(N\), we will encounter the following variant of the nonlinear Schr\"odinger equation:
\begin{equation}
  \label{limit1}
  -\Delta u + u
  = \mu \Bigl(\int_{\Rset^N} \abs{u}^p \Bigr) \abs{u}^{p-2}u,
\end{equation}
for some parameter \(\mu > 0\).
This equation \eqref{limit1} is the Euler--Lagrange equation of the energy functional $\Psi_{p,\mu}: H^1(\R^N) \to \R$ defined for each \(u \in H^1 (\Rset^N)\) by
\begin{equation}
  \label{functional1}%
  \Psi_{p, \mu}(u)
  = \frac{1}{2} \int_{\Rset^N} \bigl(\abs{\nabla u}^2 + \abs{u}^2\bigr)
    - \frac{\mu}{2p} \Bigl(\int_{\Rset^N}\abs{u}^{p}\Bigr)^2.
\end{equation}

The solutions of the problems \eqref{limit0} and \eqref{limit1} are related to each other.
Indeed, if \(u \in H^1 (\Rset^N)\) is a solution of the equation \eqref{limit0} with $q=p$, we define
\[
 v = \frac{u}{\Bigl(\displaystyle \mu \int_{\Rset^N} \abs{u}^p \Bigr)^\frac{1}{2p - 2}}.
\]
We observe that \(v\) is solution to problem \(\eqref{limit1}\) and that
\begin{equation}%
 \label{energies}%
 \Psi_{p, \mu} (v)
 = \frac{\frac{1}{2} - \frac{1}{2p}}
        {\mu^{\frac{1}{p-1}}}
        \biggl(\frac{\Phi_p (u)}{\frac{1}{2} - \frac{1}{p}}\biggr)^{\frac{p-2}{p-1}}.
\end{equation}
Conversely, if \(v \in H^1 (\Rset^N)\) is a solution of problem \eqref{limit1}, then the function
\[
 u = \biggl(\mu \int_{\Rset^N} \abs{v}^p \biggr)^\frac{1}{p - 2} v
\]
is a solution of equation \eqref{limit0} with $q=p$.

The groundstate  \(V\) of problem \eqref{limit1} inherits the sign, uniqueness and symmetry properties
of the groundstate \(U\) of problem \eqref{limit0}.
The groundstate levels are related as follows:
\begin{equation}
\label{eqKappa}
  \kappa_{p, \mu} = \Psi_{p, \mu} (V) =  \frac{\frac{1}{2} - \frac{1}{2p}}
        {\mu^{\frac{1}{p-1}}}
        \biggl(\frac{\Phi_p (U)}{\frac{1}{2} - \frac{1}{p}}\biggr)^{\frac{p-2}{p-1}}
        = \frac{\frac{1}{2} - \frac{1}{2p}}
        {\mu^{\frac{1}{p-1}}}
        \biggl(\frac{\gamma_p}{\frac{1}{2} - \frac{1}{p}}\biggr)^{\frac{p-2}{p-1}}.
\end{equation}
The groundstate level $\kappa_{p,\mu}$ can be characterized variationally as
\begin{equation}
\label{eqCharactKappa}
 \kappa_{p, \mu} = \inf\,\Bigl\{\max_{t \ge 0} \Psi_{p, \mu} (t u) \st u \in H^1(\Rset^N)\setminus\{0\}\Bigr\} = \inf \,\bigl\{ \Psi_{p,\mu}(u): \ u \in \mathcal{N}_{p,\mu}\bigr\},
\end{equation}
where the Nehari manifold associated to \eqref{limit1} is defined by
\begin{equation}%
\label{othernehari}%
  \mathcal{N}_{p,\mu} = \bigl\{ u \in H^1(\Rset^N)\setminus\{0\},\ \Psi_{p,\mu}'(u)(u)=0\bigr\}.
\end{equation}

The following lemma will be needed later in the proofs, and basically states that minimizing sequences in $\mathcal{N}_{p, \mu}$ are convergent to the groundstate, up to translations. Its proof is standard and will be omitted.

\begin{lemma} \label{salut}
Let \((u_k)_{k \in \Nset}\) be a sequence in \(\mathcal{N}_{p,\mu}\).  If $\Psi_{p,\mu}(u_k) \to \kappa_{p, \mu}$ as \(k \to \infty\), then there exists sequences \((\xi_k)_{k \in \Nset}\) in \(\R^N\) and \((\gamma_k)_{k \in \Nset}\) in \(\{-1, 1\} \) such that
\[
  u_k  - \gamma_k V(\cdot - \xi_k) \to 0 \text{ in \(H^1(\Rset^N)\)},
\]
where $V$ is the groundstate of problem \eqref{limit1}.
\end{lemma}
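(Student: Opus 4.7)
The plan is to reduce to the analogous minimization problem for the local functional $\Phi_p$ via the scaling relation described just before the statement, and then invoke the classical description of minimizing sequences for the groundstate of \eqref{limit0}.

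First I would record two basic facts about a minimizing sequence $(u_k)_{k\in\Nset}$ in $\mathcal{N}_{p,\mu}$. The Nehari identity $\Psi_{p,\mu}'(u_k)(u_k) = 0$ gives
\[
  \int_{\RN} \bigl(\abs{\nabla u_k}^2 + \abs{u_k}^2\bigr)
  = \mu \Bigl(\int_{\RN} \abs{u_k}^p\Bigr)^2,
\]
so that $\Psi_{p,\mu}(u_k) = (\tfrac{1}{2} - \tfrac{1}{2p}) \int_{\RN} (\abs{\nabla u_k}^2 + \abs{u_k}^2)$. Since $\Psi_{p,\mu}(u_k) \to \kappa_{p,\mu} > 0$, the sequence $(u_k)$ is bounded in $H^1(\RN)$ and $(\norm{u_k}_{L^p(\RN)})_{k \in \Nset}$ is both bounded and bounded away from $0$.

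Next I would set $\lambda_k = (\mu \int_{\RN} \abs{u_k}^p)^{1/(p-2)}$ and $w_k = \lambda_k u_k$. A direct computation using the Nehari identity above shows that $w_k$ lies on the Nehari manifold of $\Phi_p$, and the conversion formula \eqref{energies} read in the reverse direction yields $\Phi_p(w_k) \to \gamma_p$. Thus $(w_k)$ is a minimizing sequence for $\Phi_p$ on its Nehari manifold. I would then appeal to the standard description of such minimizing sequences: using Ekeland's variational principle, replace $(w_k)$ by a Palais--Smale sequence for $\Phi_p$ at the level $\gamma_p$ (the Nehari manifold being a natural constraint), decompose it via Lemma \ref{PS}, and rule out multi-bump configurations by means of the strict inequality \eqref{strictOtherSolutions0}; with limiting energy equal to $\gamma_p$, only one nontrivial bump can survive, and it must be a groundstate. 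This produces $\gamma_k \in \{-1, 1\}$ and $\xi_k \in \RN$ such that, along a subsequence, $w_k - \gamma_k U(\cdot - \xi_k) \to 0$ in $H^1(\RN)$.

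Finally, the $L^p$ norm of $w_k$ converges to $\norm{U}_{L^p(\RN)}$, which together with the definition of $\lambda_k$ forces $\lambda_k$ to converge to the positive constant $\lambda_\star = (\mu \norm{U}_{L^p(\RN)}^p)^{1/(2p-2)}$; one checks directly that $V = \lambda_\star^{-1} U$ is precisely the groundstate of \eqref{limit1}, and dividing through by $\lambda_k$ gives $u_k - \gamma_k V(\cdot - \xi_k) \to 0$ in $H^1(\RN)$. The main obstacle is the third step, the identification of the weak limit and the upgrade from weak to strong convergence; this is also where the uniqueness up to translations and sign of the groundstate of \eqref{limit0} enters in an essential way.
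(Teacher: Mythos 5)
Your proof is correct, and it is the natural argument that the paper has in mind when it says ``Its proof is standard and will be omitted'': the scaling $w_k = \lambda_k u_k$ with $\lambda_k = (\mu \int \abs{u_k}^p)^{1/(p-2)}$ is exactly the correspondence between \eqref{limit0} and \eqref{limit1} set up just before the lemma, Lemma~\ref{PS} is the paper's own tool, and the energy constraint together with \eqref{strictOtherSolutions0} and the uniqueness of $U$ rules out $m \neq 1$. The only points worth making explicit when writing this up are (i) that $m = 0$ is excluded because $\gamma_p > 0$, and (ii) the small algebraic justification that $\lambda_k$ is bounded away from zero (so that dividing the $H^1$-convergence of $w_k$ by $\lambda_k$ is legitimate), both of which you clearly have in mind.
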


Finally, if \(u\) is a sign-changing solution of problem \eqref{limit1}, then, in view of \eqref{strictOtherSolutions0}
\begin{equation}
  \label{strictOtherSolutionsN}%
  \Psi_{p, \mu} (u) > 2^{\frac{p-2}{p-1}} \kappa_{p, \mu}
  = 2 \kappa_{p, 2 \mu}.
\end{equation}

\bigskip

%
%
%
%

\section{Asymptotic behavior of the Riesz potential energy}
\label{sectionAsymptoticRiesz}

\subsection{Concentrating Riesz potentials}
\label{sectionConcentratingRiesz}

\medskip

In order to understand the asymptotic behavior of the Riesz potential energy as \(\alpha \to 0\), we rely on the following
\(L^2\) estimate.

\begin{lemma}
\label{lemmaConvolutions}
Let \(s \in (0, N)\) and \(\beta \in (0, \infty)\).
For every \(f, g \in L^2 (\Rset^N)\) and every \(\alpha \in (0, \beta]\) such that
\(I_{\beta} \ast f \in L^2 (\Rset^N)\) and \((-\Delta)^{s/2} f \in L^2 (\Rset^N)\), one has
\[
 \Bigabs{\int_{\Rset^N} (I_\alpha \ast f) g  - \int_{\Rset^N} fg}\le
 \bigl(\tfrac{\alpha}{\beta} \norm{I_{\beta} \ast f}_{L^2 (\Rset^N)} + \tfrac{\alpha}{s} \norm{(-\Delta)^\frac{s}{2} f}_{L^2 (\Rset^N)}\bigr)\norm{g}_{L^2 (\Rset^N)}.
\]
\end{lemma}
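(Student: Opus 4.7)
The plan is to move to the frequency side via Plancherel, so that the convolution operator $I_\alpha \ast$ and the fractional derivative $(-\Delta)^{s/2}$ become the multipliers $|\xi|^{-\alpha}$ and $|\xi|^s$ (up to an inessential $2\pi$ that is absorbed into $A_\alpha$). With this, one has the identity
\[
  \int_{\Rset^N} (I_\alpha \ast f)\, g - \int_{\Rset^N} f g = \int_{\Rset^N} \bigl(\abs{\xi}^{-\alpha} - 1\bigr)\, \hat{f}(\xi)\, \overline{\hat{g}(\xi)} \dif \xi,
\]
so the whole lemma is reduced to a pointwise estimate of the symbol $|\xi|^{-\alpha} - 1$, followed by Cauchy--Schwarz.

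The key step, and the main obstacle, is a two-sided estimate of $|\xi|^{-\alpha} - 1$ that splits between the low and high frequency regimes. For $\abs{\xi} \le 1$, setting $t = -\log \abs{\xi} \ge 0$, I would exploit the convexity of $\alpha \mapsto e^{\alpha t}$ on $[0, \beta]$: writing $\alpha = (1 - \alpha/\beta) \cdot 0 + (\alpha/\beta) \cdot \beta$ yields
\[
  \abs{\xi}^{-\alpha} - 1 = e^{\alpha t} - 1 \le \tfrac{\alpha}{\beta}\bigl(e^{\beta t} - 1\bigr) \le \tfrac{\alpha}{\beta}\abs{\xi}^{-\beta}.
\]
For $\abs{\xi} \ge 1$, the symbol is in $[0,1]$, and I would combine the two elementary inequalities $1 - e^{-x} \le x$ (for $x \ge 0$) and $\log y \le y^s/s$ (for $y \ge 1$, following from $s\log y = \log y^s \le y^s - 1$) to obtain
\[
  \bigl|\abs{\xi}^{-\alpha} - 1\bigr| = 1 - e^{-\alpha \log \abs{\xi}} \le \alpha \log \abs{\xi} \le \tfrac{\alpha}{s} \abs{\xi}^{s}.
\]

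Once these two pointwise bounds are in hand, I would bound the integrand separately on $\{\abs{\xi} \le 1\}$ and $\{\abs{\xi} > 1\}$ by Cauchy--Schwarz, using that $\bigl\|\abs{\xi}^{-\beta} \hat{f}\bigr\|_{L^2} = \norm{I_\beta \ast f}_{L^2}$, $\bigl\|\abs{\xi}^s \hat{f}\bigr\|_{L^2} = \norm{(-\Delta)^{s/2} f}_{L^2}$ and $\|\hat g\|_{L^2} = \|g\|_{L^2}$. Concretely, I would write
\[
  \Bigabs{\int_{\Rset^N} \bigl(\abs{\xi}^{-\alpha} - 1\bigr) \hat{f}\, \overline{\hat g}} \le \Bigl(\int_{\Rset^N} \bigl(\abs{\xi}^{-\alpha} - 1\bigr)^2 \abs{\hat f}^2\Bigr)^{1/2} \norm{\hat g}_{L^2},
\]
bound the first factor by $\tfrac{\alpha^2}{\beta^2}\norm{I_\beta \ast f}_{L^2}^2 + \tfrac{\alpha^2}{s^2}\norm{(-\Delta)^{s/2}f}_{L^2}^2$ using the split, and finally dominate $\sqrt{a^2 + b^2}$ by $a + b$ to reach the stated inequality. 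The only delicate ingredient is the convex interpolation $e^{\alpha t} - 1 \le (\alpha/\beta)(e^{\beta t} - 1)$, which is what produces the desired linear dependence in $\alpha/\beta$ on low frequencies.
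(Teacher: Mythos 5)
Your proposal is correct and follows essentially the same route as the paper: Plancherel, then a low/high frequency split of the symbol $|\xi|^{-\alpha}-1$ with a convexity (equivalently, scalar Young) bound giving $\tfrac{\alpha}{\beta}|\xi|^{-\beta}$ on the low side and an elementary logarithm bound giving $\tfrac{\alpha}{s}|\xi|^{s}$ on the high side, followed by Cauchy--Schwarz. The only cosmetic differences are that the paper applies Cauchy--Schwarz separately on each region rather than once followed by $\sqrt{a^2+b^2}\le a+b$, and that the multiplier is really $(2\pi|\xi|)^{-\alpha}$ (so the split occurs at $|\xi|=1/(2\pi)$, not $|\xi|=1$), which does not change the argument since the bounds $\tfrac{\alpha}{\beta}$ and $\tfrac{\alpha}{s}$ are scale-invariant.
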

\begin{proof}
If \(\widehat{f}\) and \(\widehat{g}\) denote the Fourier transforms of the functions \(f\) and \(g\), we have by the Plancherel theorem and by the formula for the Fourier transform of a Riesz potential
\[
  \int_{\Rset^N} (I_\alpha \ast f) g  - \int_{\Rset^N} fg
  = \int_{\Rset^N} \bigl((2 \pi \abs{\xi})^{-\alpha} - 1)\, \widehat{f}(\xi)\, \overline{\widehat{g}(\xi)}\dif \xi\,.
\]
We first observe by the Young inequality that if \(2 \pi \abs{\xi} \le 1\), then
\[
 1 \le (2 \pi \abs{\xi})^{-\alpha}
 \le 1 - \tfrac{\alpha}{\beta} + \tfrac{\alpha}{\beta}(2 \pi \abs{\xi})^{-\beta}
 \le 1 + \tfrac{\alpha}{\beta}(2 \pi \abs{\xi})^{-\beta}
\]
and therefore
\[
 \bigabs{(2 \pi \abs{\xi})^{-\alpha} - 1} = (2 \pi \abs{\xi})^{-\alpha} - 1  \le \tfrac{\alpha}{\beta} (2 \pi \abs{\xi})^{-\beta}.
\]
It follows thus by the Cauchy--Schwarz inequality that
\[
\begin{split}
 \Bigabs{\int_{B_{1/(2 \pi)}} \bigl((2 \pi \abs{\xi})^{-\alpha} - 1)\, \widehat{f}(\xi) \overline{\widehat{g}(\xi)} \dif \xi}
 &\le \tfrac{\alpha}{\beta} \Bigabs{\int_{B_{1/(2 \pi)}} (2 \pi \abs{\xi})^{-\beta}\, \widehat{f}(\xi) \overline{\widehat{g}(\xi)} \dif \xi}\\
 &\le \tfrac{\alpha}{\beta} \Bigl(\int_{\Rset^N} (2 \pi \abs{\xi})^{-2\beta} \bigabs{\widehat{f}(\xi)}^2\dif \xi \Bigr)^\frac{1}{2}
 \Bigl(\int_{\Rset^N} \bigabs{\widehat{g}(\xi)}^2\dif \xi \Bigr)^\frac{1}{2}\\
 &= \tfrac{\alpha}{\beta} \norm{I_{\beta} \ast f}_{L^2 (\Rset^N)} \norm{g}_{L^2 (\Rset^N)}.
\end{split}
\]
On the other hand, if \(2 \pi \abs{\xi} \ge 1\), we have, by Young's inequality again,
\[
 (2 \pi \abs{\xi})^{-\alpha}
 \le 1 \le \tfrac{s}{\alpha + s}(2 \pi \abs{\xi})^{-\alpha}
 + \tfrac{\alpha}{\alpha + s}(2 \pi \abs{\xi})^{s}
 \le (2 \pi \abs{\xi})^{-\alpha}
 + \tfrac{\alpha}{s}(2 \pi \abs{\xi})^{s},
\]
so that
\[
 \bigabs{(2 \pi \abs{\xi})^{-\alpha} - 1} = 1 - (2 \pi \abs{\xi})^{-\alpha}\le \tfrac{\alpha}{s} (2 \pi \abs{\xi})^{s}.
\]
Therefore,
\[
\begin{split}
 \Bigabs{\int_{\Rset^N \setminus B_{1/(2 \pi)}} \bigl((2 \pi \abs{\xi})^{-\alpha} - 1)\, \widehat{f}(\xi) \overline{\widehat{g}(\xi)} \dif \xi}
 &\le \tfrac{\alpha}{s} \Bigabs{\int_{\Rset^N \setminus B_{1/(2 \pi)}} (2 \pi \abs{\xi})^{-s}\, \widehat{f}(\xi) \overline{\widehat{g}(\xi)} \dif \xi}\\
 &\le \tfrac{\alpha}{s} \Bigl(\int_{\Rset^N} (2 \pi \abs{\xi})^{2s} \bigabs{\widehat{f}(\xi)}^2\dif \xi \Bigr)^\frac{1}{2}
 \Bigl(\int_{\Rset^N} \bigabs{\widehat{g}(\xi)}^2\dif \xi \Bigr)^\frac{1}{2}\\
 &= \tfrac{\alpha}{s} \norm{(-\Delta)^\frac{s}{2} f}_{L^2 (\Rset^N)} \norm{g}_{L^2 (\Rset^N)}\,.
\end{split}
\]
This concludes the proof.
\end{proof}

A variant of Lemma~\ref{lemmaConvolutions} can then be deduced, where the error is estimated in classical \(L^q (\Rset^N)\) and Sobolev norms.

\begin{lemma} \label{convolution0}If \(q > 2\), \(\max \{\frac{2N}{N + 2}, 1\} < r < 2\),
and if
\[
 0< \alpha \le \tfrac{N}{q} - \tfrac{N}{2},
\]
then
\[
 \Bigabs{\int_{\Rset^N} (I_\alpha \ast f) g  - \int_{\Rset^N} fg}\le
 C \alpha \bigl(\norm{f}_{L^q (\Rset^N)} + \norm{\nabla f}_{L^r (\Rset^N)}  \bigr)\norm{g}_{L^2 (\Rset^N)}.
\]
\end{lemma}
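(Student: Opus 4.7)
The plan is to deduce the claim from Lemma~\ref{lemmaConvolutions} by making a judicious choice of the parameters $\beta$ and $s$ in that lemma so that the two intermediate quantities $\|I_\beta \ast f\|_{L^2}$ and $\|(-\Delta)^{s/2} f\|_{L^2}$ appearing on its right-hand side can be estimated by $\|f\|_{L^q}$ and $\|\nabla f\|_{L^r}$, respectively, via the Hardy--Littlewood--Sobolev inequality and a fractional Sobolev embedding.

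For the fractional-derivative term I would take $s = 1 - N(\tfrac{1}{r} - \tfrac{1}{2})$, which lies in $(0, 1)$ because $r \in (\tfrac{2N}{N+2}, 2)$. Factorising $(-\Delta)^{s/2} = I_{1-s} \ast (-\Delta)^{1/2}$ and using that the Riesz transforms are bounded on $L^r$ for $1 < r < \infty$ (so that $\|(-\Delta)^{1/2} f\|_{L^r} \le C \|\nabla f\|_{L^r}$), the Hardy--Littlewood--Sobolev inequality with the exponent identity $\tfrac{1}{r} - \tfrac{1-s}{N} = \tfrac{1}{2}$ yields $\|(-\Delta)^{s/2} f\|_{L^2} \le C \|\nabla f\|_{L^r}$. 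For the Riesz-potential term I would choose $\beta$ to match the prescribed upper bound on $\alpha$, so that the Hardy--Littlewood--Sobolev inequality applied with the exponent identity $\tfrac{1}{q} - \tfrac{\beta}{N} = \tfrac{1}{2}$ delivers $\|I_\beta \ast f\|_{L^2} \le C \|f\|_{L^q}$. The hypothesis on $\alpha$ guarantees $\alpha \le \beta$, which is exactly the compatibility required to invoke Lemma~\ref{lemmaConvolutions}, and combining the three estimates gives
\[
  \Bigabs{\int_{\Rset^N} (I_\alpha \ast f) g - \int_{\Rset^N} fg}
  \le \alpha \Bigl( \tfrac{1}{\beta} \|I_\beta \ast f\|_{L^2} + \tfrac{1}{s} \|(-\Delta)^{s/2} f\|_{L^2} \Bigr) \|g\|_{L^2}
  \le C \alpha \bigl( \|f\|_{L^q} + \|\nabla f\|_{L^r} \bigr) \|g\|_{L^2}.
\]

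The main (and essentially only) obstacle is bookkeeping: one must verify that the chosen $\beta$ and $s$ lie in the admissible range $(0, N)$, that the inequality $\alpha \le \beta$ really holds, and that the two Hardy--Littlewood--Sobolev exponent identities fit within the permissible ranges of $q$, $r$ prescribed by the hypotheses; each of these checks is an algebraic consequence of the standing assumptions $q > 2$, $\max\{\tfrac{2N}{N+2}, 1\} < r < 2$, and the upper bound on $\alpha$. Beyond this algebra there is no substantive analytic difficulty.
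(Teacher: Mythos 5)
Your proposal is correct and follows essentially the same route as the paper: you reduce to Lemma~\ref{lemmaConvolutions} with the same choices $\beta = N(\tfrac1q - \tfrac12)$ (i.e.\ the upper bound for $\alpha$) and $s = 1 - N(\tfrac1r - \tfrac12)$, and control the resulting norms by the Hardy--Littlewood--Sobolev inequality. The one place where your write-up is arguably slightly cleaner than the paper's is the treatment of $\|(-\Delta)^{s/2}f\|_{L^2}$: you factor $(-\Delta)^{s/2} = I_{1-s}\ast(-\Delta)^{1/2}$ and invoke the $L^r$-boundedness of the Riesz transforms to pass from $(-\Delta)^{1/2}f$ to $\nabla f$, whereas the paper rewrites this on the Fourier side as $\|I_{1-s}\ast|\nabla f|\|_{L^2}$, which silently conflates $|\widehat{\nabla f}|$ with $\widehat{|\nabla f|}$; your version avoids this informal step while landing at the same estimate. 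The requisite checks that $s\in(0,1)$ and $\alpha\le\beta$ under the stated hypotheses on $q$ and $r$ are exactly the algebraic verifications the paper relies on as well.
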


\begin{proof}
We shall apply the estimate of Lemma~\ref{lemmaConvolutions}.
We take \(\beta = N (\frac{1}{q} - \frac{1}{2})\), so that, by
 the classical Hardy--Littlewood--Sobolev inequality (see for example \cite{LiebLoss2001}*{theorem 4.3}),
\[
 \norm{I_{\beta} \ast f}_{L^2 (\Rset^N)} \le C \norm{f}_{L^q (\Rset^N)}.
\]
We next take \(s = 1 - N (\frac{1}{r} - \frac{1}{2})\) and we estimate by the Hardy--Littlewood--Sobolev inequality:
\begin{equation*}
\begin{split}
 \norm{(-\Delta)^\frac{s}{2} f}_{L^2}
 &= \norm{(2 \pi |\xi|)^s \Hat{f}(\xi)}_{L^2 (\Rset^N, \dif \xi)}
 = (2 \pi)^s \norm{ \abs{\xi}^{s-1} \, \abs{\xi} \, \Hat{f}(\xi) }_{L^2(\Rset^N, \dif \xi)} \\
  &= \norm{ I_{1 - s} \ast \abs{\nabla f}}_{L^2 (\Rset^N)}
  \le C \norm{\nabla f}_{L^r}.\qedhere
\end{split}
\end{equation*}
\end{proof}

\begin{remark}
The control in terms of $\norm{\nabla f}_{L^r (\Rset^N)}$ might seem unnatural in Lemma~\ref{convolution0}, but it is actually necessary for Lemma~\ref{convolution0} to hold.
Indeed, if we choose a nonzero function $\psi \in C^{\infty}_0(\Rset^N)$ and define $f_n(x)=g_n(x)= e^{2\pi i\, n \eta\cdot x} \, \psi(x)$ for some fixed $\eta \in \R^N \setminus \{0\}$, clearly, $\abs{f_n} = \abs{f_0}$ in \(\Rset^N\) and then the sequence \((f_n)_{n \in \Nset}\) is bounded in \(L^q (\Rset^N)\) for every \(q > 2\). By the translation properties of the Fourier transform, $\widehat{f_n}(\xi)= \widehat{\psi}(\xi- n \eta)$. Now, if \(n^{\alpha_n} \to 0\) as \(n \to \infty\), we have
\begin{multline*}
  \int_{\Rset^N}  \bigl((2 \pi\abs{\xi})^{-\alpha_n} - 1\bigr)  \bigabs{\widehat{f_n}(\xi)}^2 \dif \xi =
  \int_{\Rset^N} \bigl((2\pi \abs{\zeta + n \eta})^{-\alpha_n} - 1\bigr) \bigabs{\widehat{\psi}(\zeta)}^2 \dif \zeta \\
  \to -
  \int_{\Rset^N} \abs{\widehat{\psi}(\zeta)}^2 \dif \zeta<0.
\end{multline*}
Observe that in this case, the sequence $(\abs{\nabla f_n})_{n \in \Nset}$ is not bounded in any $L^r (\Rset^N)$ space.
\end{remark}

\begin{remark}
\label{convolution1}
Given \(u \in H^1 (\Rset^N)\), we set \(f = g = \abs{u}^p\).
By the Sobolev embedding theorem and by the H\"older inequality, we have \(\abs{u}^p \in L^q (\Rset^N)\)
for every \(q >1\) such that \(\frac{1}{q} \ge p (\frac{1}{2} - \frac{1}{N})\) and
\[
\nabla (\abs{u}^p) = p \,\abs{u}^{p-2} u \,\nabla u \ \in L^{r} (\Rset^N), \text{ for each \(r \in [1, \infty)\) such that } \frac{1}{r} \ge \frac{p}{2} - \frac{p - 1}{N},
\]
by H\"older's inequality.
Since \(\frac{1}{p} > 1 - \frac{2}{N}\) we have
\[
  \frac{p}{2} - \frac{p - 1}{N} < \frac{1}{N} + \frac{1}{2} = \frac{N + 2}{2N},
\]
and we are thus in the applicability range of Lemma~\ref{convolution0}, and we have
\[
 \Bigabs{\int_{\Rset^N} \abs{u}^{2 p} - \int_{\Rset^N} \bigl(I_\alpha \ast \abs{u}^p) \abs{u}^p}
 \le C \alpha \Bigl(\int_{\Rset^N} \abs{\nabla u}^2 + \abs{u}^2 \Bigr)^p.
\]
\end{remark}

\subsection{Delocalizing Riesz potentials}
\label{sectionDelocalizingRiesz}

In the r\'egime \(\alpha \to N\), we consider the potential \(\Tilde{I}_\alpha\) defined for
\(x \in \Rset^N \setminus \{0\}\) by
\begin{equation}
\label{eqDefUnnormalizedRieszPotential}
  \Tilde{I}_\alpha (x) = \frac{1}{\abs{x}^{N - \alpha}};
\end{equation}
this potential is related to the Riesz potential as follows
\[
 \Tilde{I}_{\alpha} = \frac{\Gamma(\tfrac{\alpha}{2})\pi^{N/2}2^{\alpha} }{\Gamma(\tfrac{N-\alpha}{2})} I_\alpha,
\]
and, as \(\alpha \to N\),
\[
 \frac{\Gamma(\tfrac{\alpha}{2})\pi^{N/2}2^{\alpha} }{\Gamma(\tfrac{N-\alpha}{2})}
 = \Gamma (\tfrac{N}{2}) \pi^{N/2} 2^{N - 1} (N - \alpha) \bigl(1 + o (1)\bigr).
\]

In the next lemma we give un upper bound for the Riesz potential energy:

\begin{lemma}
\label{lemmaConvEstimate}
Let \(r \in (1, \infty)\).
For every \(\alpha \in (N/r, N)\), if \(f \in L^1 (\Rset^N) \cap L^r (\Rset^N)\) is nonnegative and
\(x \in \Rset^N\),
\[
  (\Tilde{I}_\alpha \ast f) (x) \le \int_{\Rset^N} f +
  C \frac{N - \alpha}{(r \alpha - N)^{1 - 1/r}}
  \Bigl(\int_{\Rset^N} f^r \Bigr)^\frac{1}{r}.
\]
In particular, if the function \(g \in L^1 (\Rset^N)\) is nonnegative, then
\[
  \int_{\Rset^N} (\Tilde{I}_\alpha \ast f) \, g
  \le \biggl(\int_{\Rset^N} f\biggr) \biggl(\int_{\Rset^N} g\biggr)
  + C \frac{N - \alpha}{(r \alpha - N)^{1 - 1/r}}
  \Bigl(\int_{\Rset^N} f^r \Bigr)^\frac{1}{r} \int_{\Rset^N} g.
\]
\end{lemma}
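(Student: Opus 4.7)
The strategy is to reduce the pointwise bound to a single scalar estimate on the kernel and then prove that estimate by a direct computation. First I would write the difference
\[
  (\Tilde{I}_\alpha \ast f)(x) - \int_{\Rset^N} f
  = \int_{\Rset^N} \bigl(\abs{x-y}^{-(N-\alpha)} - 1\bigr)\, f(y) \dif y,
\]
and note that the integrand is nonpositive on \(\{\abs{x-y} \ge 1\}\) and nonnegative on \(\{\abs{x-y} \le 1\}\). Since \(f \ge 0\), dropping the nonpositive part preserves the upper bound, and H\"older's inequality with the dual pair \((r, r')\), where \(r' = r/(r-1)\) and \(1 - 1/r = 1/r'\), reduces the task to establishing the kernel estimate
\[
  \int_{B_1} \bigl(\abs{z}^{-(N-\alpha)} - 1\bigr)^{r'} \dif z \le C\, \frac{(N-\alpha)^{r'}}{r\alpha - N}.
\]

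For this estimate I would pass to spherical coordinates and substitute \(t = (N-\alpha) \log(1/\abs{z})\), which turns the integral into \(\tfrac{\omega_{N-1}}{N-\alpha}\int_0^\infty (e^t - 1)^{r'}\,e^{-Nt/(N-\alpha)} \dif t\), and then split at \(t = 1\). On \([0,1]\) the bound \(e^t - 1 \le (e-1)\,t\) (by convexity), combined with extending the integral to \([0, \infty)\), yields a contribution of order \((N-\alpha)^{r'+1}\). On \([1,\infty)\) the estimate \((e^t - 1)^{r'} \le e^{tr'}\) reduces the integral to \(\int_1^\infty e^{t(r' - N/(N-\alpha))} \dif t\), which converges precisely thanks to the hypothesis \(\alpha > N/r\) (equivalently, \(N/(N-\alpha) > r'\)) and contributes a term proportional to \((N-\alpha)\,e^{-N/(N-\alpha)}/(r\alpha - N)\). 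The decisive step is then to absorb this exponential: applying the super-polynomial estimate \(e^{-x} \le k!/x^k\) with \(k = \lceil r' \rceil\) gives \(e^{-N/(N-\alpha)} \le C\,(N-\alpha)^{r'}\), and the two pieces combine into the desired bound for the \(t\)-integral. The full pointwise estimate then follows by H\"older's inequality and the ``in particular'' assertion is obtained by integrating the pointwise inequality against the nonnegative function \(g\).

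The main difficulty is capturing both the decay factor \(N - \alpha\) and the correct singular behaviour \((r\alpha - N)^{-1/r'}\) simultaneously. The crude bound \(\abs{z}^{-(N-\alpha)} - 1 \le \abs{z}^{-(N-\alpha)}\) yields the right singular denominator but completely misses the \((N - \alpha)\) factor, whereas a mean-value-type bound \(\abs{z}^{-(N-\alpha)} - 1 \le (N-\alpha)\,\abs{z}^{-(N-\alpha)}\log(1/\abs{z})\) delivers the \((N - \alpha)\) factor but with a spurious extra power of \(r\alpha - N\) in the denominator. The super-polynomial decay of \(e^{-N/(N-\alpha)}\) as \(\alpha \to N\) is precisely the ingredient that reconciles both asymptotics in a single uniform estimate.
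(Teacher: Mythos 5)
Your proof is correct and shares the paper's opening moves: drop the contribution from $\Rset^N\setminus B_1(x)$ using the sign of the kernel and the nonnegativity of $f$, then apply H\"older with exponents $(r,r')$ to reduce the matter to the radial integral $\int_{B_1}\bigl(\abs{z}^{-(N-\alpha)}-1\bigr)^{r'}\dif z$, where $r'=r/(r-1)$. Where you diverge is in estimating this integral. Your substitution $t=(N-\alpha)\log(1/\abs{z})$, the split at $t=1$, and the super-polynomial bound $e^{-N/(N-\alpha)}\le k!\,\bigl(\tfrac{N-\alpha}{N}\bigr)^{k}$ produce a \emph{single} estimate valid uniformly over $(N/r,N)$. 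The paper instead records exactly the two ``one-sided'' bounds you dismiss in your closing paragraph: dropping $1-s^{N-\alpha}\le 1$ gives the value $(r-1)/(r\alpha-N)$, while the convexity bound $1-s^{N-\alpha}\le(N-\alpha)\ln(1/s)$ gives $(N-\alpha)^{r'}\int_0^1(\ln\tfrac{1}{s})^{r'}s^{(r\alpha-N)/(r-1)-1}\dif s = (N-\alpha)^{r'}\,\Gamma(r'+1)\bigl(\tfrac{r-1}{r\alpha-N}\bigr)^{r'+1}$. Contrary to your remark that neither is adequate, the two do suffice \emph{together}: the first controls the regime where $r\alpha-N$ is small (then $N-\alpha$ is bounded below, so the missing $(N-\alpha)^{r'}$ only costs a constant), and the second controls the regime where $r\alpha-N$ is bounded away from $0$ (then the extra power of $(r\alpha-N)^{-1}$ is harmless). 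So the paper's argument is an implicit two-case combination of precisely the two crude bounds you discarded, while yours trades the case split for the (slightly less elementary) super-polynomial trick. Both are perfectly valid; your closing diagnosis that a single unified estimate is \emph{needed} is the only place where the note slightly overstates the difficulty.
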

\begin{proof}
Since the function \(f\) is summable and nonnegative, for each \(x \in \Rset^N\),
\[
\begin{split}
 (\Tilde{I}_\alpha \ast f) (x) - \int_{\Rset^N} f
 = \int_{\Rset^N} \biggl(\frac{1}{\abs{x-y}^{N - \alpha}} - 1\biggr) f (y) \dif y
 \le \int_{B_1 (x)} \biggl(\frac{1}{\abs{x-y}^{N - \alpha}} - 1\biggr) f (y) \dif y.
\end{split}
\]
Therefore, by the classical H\"older inequality, we have
\begin{equation}
\label{ineqTIalphaxisyr}
  (\Tilde{I}_\alpha \ast f) (x) - \int_{\Rset^N} f
  \le \Bigl(\int_{B_1 (0)} \Bigl(\frac{1}{\abs{z}^{N - \alpha}} - 1\Bigr)^\frac{r}{r - 1} \Bigr)^{1 - \frac{1}{r}}\Bigl(\int_{\Rset^N} f^r \Bigr)^\frac{1}{r}.
\end{equation}

In order to estimate the first integral, we first perform a radial integration:
\begin{equation}
\label{eqRadialIntegration}
  \int_{B_1 (0)} \Bigl(\frac{1}{\abs{z}^{N - \alpha}} - 1\Bigr)^\frac{r}{r - 1}\dif z
  = \abs{\partial B_1 (0)} \int_0^1 \Bigl(\frac{1 - s^{N - \alpha}}{s^{N - \alpha}}\Bigr)^\frac{r}{r - 1}s^{N - 1} \dif s.
\end{equation}
On the one hand, the latter integral can be bounded by
\begin{equation}
\label{ineqTIalphaxisyrLargealpha}
  \int_0^1 \Bigl(\frac{1 - s^{N - \alpha}}{s^{N - \alpha}}\Bigr)^\frac{r}{r - 1}s^{N - 1} \dif s
  \le \int_0^1 s^{N - (N - \alpha)\frac{r}{r - 1} - 1} \dif s
  = \frac{r - 1}{N - r \alpha}.
\end{equation}
On the other hand, by elementary convexity, we have for each \(s \in (0, 1]\),
\[
1 + (N - \alpha) \ln s
\le \exp \Bigl((N - \alpha) \ln s\Bigr) =
 s^{N - \alpha},
\]
and therefore
\begin{equation}
\label{ineqTIalphaxisyrSmallalpha}
\begin{split}
 \int_0^1 \Bigl(\frac{1 - s^{N - \alpha}}{s^{N - \alpha}}\Bigr)^\frac{r}{r - 1}s^{N - 1} \dif s
 &\le (N - \alpha)^\frac{r}{r - 1} \int_0^1  \Bigl(\ln \frac{1}{s} \Bigr)^\frac{r}{r - 1} s^{N - 1 - \frac{r}{r - 1}(N - \alpha)} \dif s.
\end{split}
\end{equation}
The first inequality follows from \eqref{ineqTIalphaxisyr}, \eqref{eqRadialIntegration}, \eqref{ineqTIalphaxisyrLargealpha} and \eqref{ineqTIalphaxisyrSmallalpha}.

The second inequality is obtained by multiplying the first one
by \(g (x)\) and integrating with respect to \(x \in \Rset^N\).
\end{proof}

Next lemma is concerned with the reversed inequality:

\begin{lemma}
\label{lemmaConv}
Let \(r \in (1, \infty)\), \((\alpha_n)_{n \in \Nset}\) be a sequence in \((N/r, N)\) converging to \(N\),
\((\xi_n)_{n \in \Nset}\) be a sequence in \(\Rset^n\) and let \((f_n)_{n \in \Nset}\) be a bounded sequence of functions in \(L^r (\Rset^N)\). If \((f_n(\cdot - \xi_n))_{n \in \Nset}\) converges strongly to \(f\) in \(L^1 (\Rset^N)\) and
if \((1/(1+|\xi_n|)^{N-\alpha_n})_{n \in \Nset}\) converges to \(\varrho \in [0,1]\), then
\begin{equation} \label{uno}
  \lim_{n \to \infty} (\Tilde{I}_{\alpha_n} \ast f_n) (x) = \varrho \int_{\Rset^N} f \ \mbox{ for any } x \in \Rset^N.
\end{equation}
If moreover \((g_n)_{n \in \Nset}\) is a sequence of functions
that converges to \(g\) strongly in \(L^1 (\Rset^N)\), then
\begin{equation} \label{dos}
\lim_{n \to \infty} \int_{\Rset^N} (\Tilde{I}_{\alpha_n} \ast f_n)\, g_n = \varrho   \Bigl(\int_{\Rset^N} f\Bigr) \Bigl(\int_{\Rset^N} g \Bigr).
\end{equation}
\end{lemma}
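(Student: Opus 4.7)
The plan is to change variables $z = y + \xi_n$ to rewrite
\[
(\Tilde{I}_{\alpha_n} \ast f_n)(x) = \int_{\Rset^N} \frac{\tilde{f}_n(z)}{\abs{y_n - z}^{N - \alpha_n}} \dif z
\]
with $y_n := x + \xi_n$ and $\tilde{f}_n := f_n(\cdot - \xi_n) \to f$ in $L^1(\Rset^N)$; since $N - \alpha_n \to 0$ and $(1 + \abs{y_n})/(1 + \abs{\xi_n})$ is bounded for fixed $x$, one still has $(1 + \abs{y_n})^{-(N - \alpha_n)} \to \varrho$. The guiding intuition is that $\abs{y_n - z}^{-(N - \alpha_n)} \to \varrho$ on every bounded set away from a possible singularity at $y_n$: if $\abs{\xi_n} \to \infty$ one reads this off from the two-sided bound $(\abs{y_n} + R)^{-(N - \alpha_n)} \le \abs{y_n - z}^{-(N - \alpha_n)} \le (\abs{y_n} - R)^{-(N - \alpha_n)}$ on $\{\abs{z} \le R\}$, whereas if $\xi_n$ stays bounded then $\varrho = 1$ and the exponent simply tends to $0$.

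To prove the pointwise identity \eqref{uno}, I would split the integration domain in $z$ into $\{\abs{z} \le R\}$ and its complement and pass to the limit first in $n$, then in $R$. On the central ball the only delicate point is the near-singularity $\{\abs{y_n - z} \le \delta\}$: by H\"older's inequality, its contribution is bounded by $\norm{\tilde{f}_n}_{L^r}\, \bigl(\int_{\abs{w} \le \delta} \abs{w}^{-(N - \alpha_n) r'}\bigr)^{1/r'} \le C \delta^{N/r' - o(1)}$ (the hypothesis $\alpha_n > N/r$ is precisely what keeps this exponent positive, exactly as in Lemma~\ref{lemmaConvEstimate}), while on its complement the kernel is bounded and converges pointwise to $\varrho$, so Lebesgue dominated convergence combined with $\tilde{f}_n \to f$ in $L^1$ gives $\varrho \int_{\abs{z} \le R,\, \abs{y_n - z} > \delta} f$, and then $\delta \to 0$. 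The tail $\{\abs{z} > R\}$ splits once more into $\{\abs{y_n - z} \ge 1\}$, where the kernel is at most $1$ and the contribution is dominated by $\int_{\abs{z} > R} \abs{\tilde{f}_n} \to \int_{\abs{z} > R} \abs{f}$, which vanishes as $R \to \infty$, and $\{\abs{y_n - z} < 1\}$, which when $\abs{\xi_n} \to \infty$ lies inside $\{\abs{z} \ge \abs{y_n}/2\}$; the same H\"older estimate together with $\norm{\tilde{f}_n - f}_{L^s} \to 0$ for some fixed $s \in (1, r)$ (obtained by interpolation of the $L^1$ convergence with the $L^r$ bound) and $\norm{f}_{L^s(B_1(y_n))} \to 0$ (by absolute continuity) makes this contribution vanish as $n \to \infty$.

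The integral identity \eqref{dos} then follows quickly. By Lemma~\ref{lemmaConvEstimate} applied to $\abs{f_n}$, the sequence $\Tilde{I}_{\alpha_n} \ast \abs{f_n}$ is uniformly bounded in $L^\infty(\Rset^N)$, since the error factor $(N - \alpha_n)/(r \alpha_n - N)^{1 - 1/r}$ remains bounded as $\alpha_n \to N$. Splitting
\[
\int_{\Rset^N} (\Tilde{I}_{\alpha_n} \ast f_n)\, g_n = \int_{\Rset^N} (\Tilde{I}_{\alpha_n} \ast f_n)(g_n - g) + \int_{\Rset^N} (\Tilde{I}_{\alpha_n} \ast f_n)\, g,
\]
the first summand is at most $\norm{\Tilde{I}_{\alpha_n} \ast \abs{f_n}}_{L^\infty} \norm{g_n - g}_{L^1} \to 0$, while the second converges to $\varrho\, \bigl(\int f\bigr)\bigl(\int g\bigr)$ by dominated convergence applied to $x \mapsto (\Tilde{I}_{\alpha_n} \ast f_n)(x)\, g(x)$, with pointwise limit furnished by \eqref{uno} and $\sup_n \norm{\Tilde{I}_{\alpha_n} \ast \abs{f_n}}_{L^\infty} \abs{g} \in L^1$ as dominator.

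The chief technical obstacle is the vanishing of the tail in \eqref{uno} when $\abs{\xi_n} \to \infty$: the near-singularity of the kernel is then centred at the escaping point $y_n$, exactly in a region where $\tilde{f}_n$ converges only in $L^1$ and not in $L^r$. Overcoming this requires interpolating the two available regularities, which is compatible with absorbing the singular kernel on $B_1(y_n)$ only under the threshold assumption $\alpha_n > N/r$ that keeps $\abs{w}^{-(N - \alpha_n) s'}$ locally integrable for some $s \in (1, r)$ close to $1$.
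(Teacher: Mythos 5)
Your proof is correct, but it organizes the argument for \eqref{uno} differently from the paper, and the paper's split is noticeably cleaner. The paper fixes a small $\delta>0$ and splits the integral (before any change of variables) into $B_\delta$ around the kernel singularity at the origin and its complement; on $B_\delta$ the contribution is killed in one shot by Lemma~\ref{lemmaConvEstimate} (the same H\"older bound you use, packaged), while on the complement it changes variables $z=y+\xi_n$, observes that the kernel $\abs{z-\xi_n}^{-(N-\alpha_n)}-\varrho$ is uniformly bounded and tends to $0$ pointwise on $\Rset^N\setminus B_\delta(\xi_n)$, and closes with dominated convergence against the $L^1$-convergent family $f_n(\cdot-\xi_n)$. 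Because the singularity is isolated once and for all at the centre, there is no "escaping singularity'' to chase into the tail. You, by contrast, translate first and then cut at radius $R$ in the new variable; this creates two separate singular regions (one inside the ball when $\xi_n$ stays bounded, one travelling into the tail when $\abs{\xi_n}\to\infty$), forcing the interpolation argument $\norm{\tilde f_n-f}_{L^s}\to 0$ to control the latter. That argument is sound---for $s\in(1,r)$ close to $1$ you eventually have $\alpha_n>N/s$, and $f\in L^r$ by weak lower semicontinuity so the interpolation applies---but it is extra machinery that the paper's choice of decomposition makes unnecessary. Your treatment of \eqref{dos} is essentially the same as the paper's (both hinge on the uniform $L^\infty$ bound from Lemma~\ref{lemmaConvEstimate} plus dominated convergence); the only cosmetic difference is that you split $g_n=(g_n-g)+g$ explicitly whereas the paper passes to a subsequence dominated in $L^1$ and applies dominated convergence directly.
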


Lemma~\ref{lemmaConv} gives a good idea of the validity of the reversed bound of Lemma~\ref{lemmaConvEstimate}. Indeed, $\Tilde{I}_{\alpha_n} \ast f_n(x) \to \int_{\R^N} f$ for \emph{converging sequences} $f_n$, but it can fail for sequences of functions given by translations in the $x$-variable. We shall prove later that the least energy nodal solutions behave as two signed bumps whose distance diverges. As we shall see, this makes our proofs more involved in the case $\alpha$ close to $N$.

\begin{proof}[Proof of Lemma~\ref{lemmaConv}]
We can assume that $x=0$, by making a suitable translation.
We rewrite for each \(n \in \Nset\) the quantities appearing in \eqref{uno} in integral form
\[
(\Tilde{I}_{\alpha_n} \ast f_n) (0) - \varrho \int_{\Rset^N} f_n
= \int_{\Rset^N}  f_n (y) \biggl(\frac{1}{\abs{y}^{N - \alpha_n}} - \rho\biggr)\dif y.
\]
Given \(\delta \in (0, 1)\), by the H\"older inequality  and by Lemma~\ref{lemmaConvEstimate}, we first have
\[
\begin{split}
  \Bigabs{\int_{B_\delta}f_n ( y) \biggl(\frac{1}{\abs{y}^{N - \alpha_n}} - \rho\biggr)\dif y}
  &\le \int_{B_\delta} \abs{f_n ( y)}\, \biggl(\frac{1}{\abs{y}^{N - \alpha_n}} + \rho\biggr)\dif y\\
  &\le (1+ \varrho) \abs{B_\delta}^{1 - \frac{1}{p}} \| f_n\|_{L^p(\Rset^N)} + C(N - \alpha_n) \| f_n\|_{L^r(\RN)},
\end{split}
\]
Next, we write
\[
 \int_{\Rset^N \setminus B_\delta}f_n (y) \biggl(\frac{1}{\abs{y}^{N - \alpha_n}} - \rho\biggr)\dif y
 = \int_{\Rset^N \setminus B_\delta (\xi_n) }f_n (z - \xi_n) \biggl(\frac{1}{\abs{z  - \xi_n}^{N - \alpha_n}} - \rho\biggr) \dif y.
\]
We observe that for every \(z \in \Rset^N \),
\begin{equation}
\label{eqLimitRieszTranslated}
  \lim_{n \to \infty} \Bigl(\frac{1}{\abs{z - \xi_n}^{N - \alpha_n}} - \rho\Bigr) \chi_{B_\delta (\xi_n)} = 0.
\end{equation}
Indeed, we have by the triangle inequality, on the one hand,
\[
  \abs{z - \xi_n} \le \abs{z} + \abs{\xi_n} \le (1 + \abs{z})(1 + \abs{\xi_n})
\]
and on the other hand, if \(z \in \Rset^N \setminus B_\delta (\xi_n)\), we have
\[
 \abs{z - \xi_n}
 \ge (1 - \lambda) \delta + \lambda (\abs{\xi_n} - \abs{z})
 = \lambda (\abs{\xi_n} + 1).
\]
with \(\lambda = \delta/(1 + \abs{z} + \delta)\).
Therefore it follows that, if \(z \not \in B_{\delta} (\xi_n)\)
\[
 \Bigl(\frac{1}{\abs{z} + 1}\Bigr)^{N - \alpha_n} \frac{1}{(1 + \abs{\xi_n})^{N - \alpha_n}} \le \frac{1}{\abs{z - \xi_n}^{N - \alpha_n}} \le \Bigl(\frac{1 + \abs{z} + \delta}{\delta}\Bigr)^{N - \alpha_n} \frac{1}{(1 + \abs{\xi_n})^{N - \alpha_n}},
\]
and the limit \eqref{eqLimitRieszTranslated} follows.
Therefore, by Lebesgue's dominated convergence theorem,
\[
 \lim_{n \to \infty} \int_{\Rset^N \setminus B_\delta (\xi_n) }f_n (z - \xi_n) \biggl(\frac{1}{\abs{z - \xi_n}^{N - \alpha_n}} - \rho\biggr) \dif y = 0.
\]
Since $\delta>0$ is arbitrary, the first conclusion \eqref{uno} follows.

\medskip In order to prove \eqref{dos}, we pass to a subsequence so that $g_n \to g$ almost everywhere in \(\Rset^N\) as \(n \to \infty\) and for each \(n \in \Nset\), $|g_n| \leq h$ in \(\Rset^N\) for some $h \in L^1(\Rset^N)$.
By \eqref{uno}, the sequence $(\Tilde{I}_{\alpha_n} \ast f_n)_{n \in \Nset}$ converges to the constant $\varrho \int_{\Rset^N} f$ everywhere in \(\Rset^N\).
By Lemma~\ref{lemmaConvEstimate}, the sequence $(\Tilde{I}_{\alpha_n} \ast f_n)_{n \in \Nset}$ is uniformly bounded over \(\Rset^N\), so that Lebesgue's dominated convergence theorem applies and brings the conclusion.
\end{proof}

\section{Proof of Theorem~\ref{theoremMain0}}
\label{sectionProofTheoremMain0}

This section is devoted to the proof of Theorem~\ref{theoremMain0}. As a first step, in the next proposition we show that least energy nodal solutions are asymptotically odd with respect to a hyperplane.

\begin{proposition} \label{puf}
Let \(u_\alpha\) be a family of solutions to \eqref{eqChoquard} that changes sign and satisfying \(I_\alpha (u_\alpha) \le 2 c_\alpha \), then
\[
 \lim_{\alpha \to 0}   \ \ \inf_{\xi^+, \xi^- \in \Rset^N} \norm{u_\alpha - (U (\cdot - \xi^+) - U (\cdot - \xi^-))}_{H^1 (\Rset^N)} = 0.
\]
Moreover, for \(\alpha \in (0, N)\) small enough there exist \(\xi_\alpha^+, \xi_\alpha^-\)
such that
\[
 \norm{u_\alpha - (U (\cdot - \xi_\alpha^+) - U (\cdot - \xi_\alpha^-))}_{H^1 (\Rset^N)}
 =\inf_{\xi^+, \xi^- \in \Rset^N} \norm{u_\alpha - (U (\cdot - \xi^+) - U (\cdot - \xi^-))}_{H^1 (\Rset^N)}
\]
and
\begin{equation}%
\label{distance}
 \lim_{\alpha \to 0} \, \abs{\xi_\alpha^+ - \xi_\alpha^-} = + \infty.
\end{equation}

\end{proposition}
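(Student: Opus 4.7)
The plan is to show that, as $\alpha \to 0$, the family $u_\alpha$ is a Palais--Smale sequence for the limit functional $\Phi_{2p}$, and then to identify the asymptotic profile via Lemma~\ref{PS}. Testing the Choquard equation against $u_\alpha$ gives $J_\alpha(u_\alpha) = (\tfrac12-\tfrac{1}{2p})\int_{\Rset^N}(\abs{\nabla u_\alpha}^2 + u_\alpha^2)$, so the hypothesis $J_\alpha(u_\alpha) \le 2c_\alpha^\gst$ together with the boundedness of $c_\alpha^\gst$ as $\alpha \to 0$ (which follows by comparing the minimax characterizations \eqref{eqChoquardGroundstateMinimax} and \eqref{eqSchrodingerGroundstateMinimax} through Remark~\ref{convolution1}) yields a uniform $H^1$-bound on $u_\alpha$. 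Remark~\ref{convolution1} then provides $\abs{\Phi_{2p}(u_\alpha) - J_\alpha(u_\alpha)} = O(\alpha)$, and an analogous application of Lemma~\ref{convolution0}, combined with $J_\alpha'(u_\alpha) = 0$, gives $\norm{\Phi_{2p}'(u_\alpha)}_{H^{-1}(\Rset^N)} = O(\alpha)$. Hence $(u_\alpha)$ is a Palais--Smale sequence for $\Phi_{2p}$ at a level $L \le 2\gamma_{2p}$.

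\smallskip
Applying Lemma~\ref{PS} I obtain a decomposition $u_\alpha - \sum_{i=1}^m U_i(\cdot - a_\alpha^i) \to 0$ in $H^1(\Rset^N)$, with $\Phi_{2p}(u_\alpha) \to \sum_{i=1}^m \Phi_{2p}(U_i)$ and $\abs{a_\alpha^i - a_\alpha^j} \to \infty$ when $i \ne j$. Every nonzero solution $U_i$ of \eqref{limit0} satisfies $\Phi_{2p}(U_i) \ge \gamma_{2p}$, with equality only for the signed groundstates $\pm U$; the strict inequality \eqref{strictOtherSolutions0} for nodal solutions combined with $\sum_i \Phi_{2p}(U_i) \le 2\gamma_{2p}$ forbids any sign-changing $U_i$ and forces $m \le 2$. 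A direct Hardy--Littlewood--Sobolev and Sobolev embedding argument on the Choquard equation gives $\norm{u_\alpha}_{H^1} \ge c > 0$, so $m \ge 1$. To rule out $m = 1$ and the configurations in which both bumps carry the same sign, I would upgrade the $H^1$-convergence to local $C^0$-convergence after recentering, using the standard elliptic regularity theory for the Choquard equation together with the exponential decay of $U$; in either forbidden case the approximation would force $u_\alpha$ to have constant sign for $\alpha$ small, contradicting the hypothesis. Thus $m = 2$ with opposite signs, which proves the first claim by setting $\xi_\alpha^\pm = a_\alpha^{\pm}$.

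\smallskip
For the second part, the existence of the optimal translations $\xi_\alpha^\pm$ hinges on the nondegeneracy of $U$: for $\alpha$ small, $u_\alpha$ lies in a neighborhood of the smooth $2N$-dimensional manifold
\[
  \mathcal{M} = \bigl\{U(\cdot - \xi^+) - U(\cdot - \xi^-) \st \xi^+, \xi^- \in \Rset^N,\ \xi^+ \neq \xi^- \bigr\},
\]
and the orthogonality conditions defining the projection onto $\mathcal{M}$ can be solved by the implicit function theorem, the linearization being invertible thanks to the nondegeneracy of $U$ and to the large separation between $\xi^+$ and $\xi^-$. Once the $\xi_\alpha^\pm$ are produced, property~iii) in Lemma~\ref{PS} immediately forces $\abs{\xi_\alpha^+ - \xi_\alpha^-} \to \infty$, giving \eqref{distance}. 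The most delicate step I expect is ruling out the same-sign configurations (especially at $p=2$, where the Nehari identity gives no direct positive lower bound on $\norm{u_\alpha^\pm}_{H^1}$): this forces one to invoke elliptic regularity for the Choquard equation in order to pass from $H^1$-convergence to $C^0$-convergence.
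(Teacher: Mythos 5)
Your overall strategy matches the paper's: show $(u_\alpha)$ is a Palais--Smale sequence for $\Phi_{2p}$ at level $\le 2\gamma_{2p}$, invoke Lemma~\ref{PS}, use \eqref{strictOtherSolutions0} to force $m\le 2$ with $U_i=\pm U$, then obtain the minimizing translations. The boundedness and PS estimates are correct and are exactly what the paper does through its Claims~1 and 2 and Remark~\ref{convolution1}.

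The genuine gap is in ruling out $m=1$ and the same-sign configuration. You propose to pass from $H^1$-convergence to local $C^0$-convergence via elliptic regularity and then conclude that the forbidden decompositions ``force $u_\alpha$ to have constant sign,'' contradicting the sign-changing hypothesis. This does not work. First, uniform or local $C^0$-closeness of $u_\alpha$ to $U(\cdot-a_\alpha)$ does not prevent $u_\alpha$ from being slightly negative far from $a_\alpha$, where $U$ is exponentially small and below the $C^0$-error; so you cannot deduce that $u_\alpha$ has literally constant sign. Second, and more to the point, the hypothesis ``$u_\alpha$ changes sign'' only says $u_\alpha^-\ne 0$, which is \emph{not} contradicted by $\norm{u_\alpha^-}_{H^1}\to 0$. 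What is actually needed is the quantitative statement that both $\norm{u_\alpha^+}_{H^1}$ and $\norm{u_\alpha^-}_{H^1}$ stay bounded away from $0$ as $\alpha\to 0$; combined with the continuity of $u\mapsto u^\pm$ in $H^1$ and the $H^1$-decomposition from Lemma~\ref{PS}, this rules out $m=1$ and the same-sign case with no elliptic regularity at all. The paper proves this as Claim~3 (\texttt{claimTheorem0SignChanges}): for $p>2$ one tests the equation against $u_\alpha^\pm$ and applies the Hardy--Littlewood--Sobolev and Sobolev inequalities (with the observation that $C_{N,\alpha}\to 1$ as $\alpha\to 0$) to obtain a lower bound on $\norm{u_\alpha^\pm}_{H^1}$; for $p=2$, where you correctly flagged that this fails, the paper instead normalizes $v_\alpha=u_\alpha^-/\norm{u_\alpha^-}_{H^1}$, uses the Nehari identity and Lemma~\ref{convolution0} to compare $\int(I_\alpha\ast\abs{u_\alpha^+}^2)\abs{v_\alpha}^2$ to $\int\abs{u_\alpha^+}^2\abs{v_\alpha}^2=0$, and derives a contradiction. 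This variational argument is the missing ingredient in your sketch.

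Your treatment of the second part is also more complicated than needed. Instead of setting up an implicit-function-theorem projection onto the manifold $\mathcal M$, the paper simply observes, via Fatou's lemma, that $\liminf_{\abs{\xi^+}+\abs{\xi^-}\to\infty}\norm{u_\alpha-(U(\cdot-\xi^+)-U(\cdot-\xi^-))}_{H^1}\ge\min\{\norm{u_\alpha}_{H^1},\norm{U}_{H^1}\}$, which stays away from zero; since the infimum is close to zero for $\alpha$ small, a minimizing pair $(\xi_\alpha^+,\xi_\alpha^-)$ exists by continuity. Then property~(iii) of Lemma~\ref{PS} gives \eqref{distance} exactly as you say. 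Your implicit-function-theorem approach is a viable alternative, but is not what the paper does and requires more care (verifying that the projection produced is the genuine global minimizer).
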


By Proposition~\ref{propositionControlGroundNodal}, if \(u_\alpha\) is a least energy nodal solution we have \(I_{\alpha}(u_\alpha) = c_\alpha^\nod < 2 c_{\alpha}\) and
thus \(u_\alpha\) satisfies the assumption of Proposition~\ref{puf}.

\begin{proof}[Proof of Proposition~\ref{puf}]
The proof relies on some preliminary results which are stated in form of claims.
\begin{claim}
\label{claim0Gst}
One has
\[
 \lim_{\alpha \to 0} c_\alpha^\gst = \gamma_{2 p}.
\]
\end{claim}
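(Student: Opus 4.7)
The strategy is to establish matching $\limsup$ and $\liminf$ inequalities between $c_\alpha^\gst$ and $\gamma_{2p}$, using the minimax characterizations \eqref{eqChoquardGroundstateMinimax} and \eqref{eqSchrodingerGroundstateMinimax} (with $q = 2p$), combined with the uniform asymptotic estimate from Remark~\ref{convolution1}, which says that for any $u \in H^1(\R^N)$,
\[
\bigabs{J_\alpha(u) - \Phi_{2p}(u)}
= \frac{1}{2p}\Bigabs{\int_{\Rset^N} \abs{u}^{2p} - \int_{\Rset^N} \bigl(I_\alpha \ast \abs{u}^p\bigr)\abs{u}^p}
\le C \alpha\, \norm{u}_{H^1(\Rset^N)}^{2p}.
\]

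For the upper bound, I would use the positive radial groundstate $U$ of the Schr\"odinger problem \eqref{limit0} (with $q = 2p$) as a trial function in \eqref{eqChoquardGroundstateMinimax}. Since $U$ is fixed, the estimate above implies that $J_\alpha(tU) \to \Phi_{2p}(tU)$ uniformly for $t$ in any bounded interval, and outside such an interval both expressions are very negative. The map $t \mapsto \Phi_{2p}(tU)$ attains its maximum $\gamma_{2p}$ at $t = 1$; by a direct calculation of its concave envelope along the ray, $\max_{t \ge 0} J_\alpha(tU)$ is attained at a point $t_\alpha$ with $t_\alpha^{2p-2} = \norm{U}_{H^1}^2/\int(I_\alpha \ast \abs{U}^p)\abs{U}^p \to 1$, so $\max_{t \ge 0} J_\alpha(tU) \to \gamma_{2p}$. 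Hence $\limsup_{\alpha \to 0} c_\alpha^\gst \le \gamma_{2p}$.

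For the lower bound, let $U_\alpha$ be a positive radial groundstate, so that $c_\alpha^\gst = J_\alpha(U_\alpha) = (\tfrac12 - \tfrac{1}{2p})\norm{U_\alpha}_{H^1}^2$ by the Nehari identity. The upper bound already gives $\norm{U_\alpha}_{H^1} \le M$ for some $M$ independent of $\alpha$. Conversely, combining the Nehari identity, Remark~\ref{convolution1}, and the Sobolev embedding $H^1(\Rset^N) \hookrightarrow L^{2p}(\Rset^N)$ (which applies since $\frac{1}{p} > 1 - \frac{2}{N}$), we have
\[
\norm{U_\alpha}_{H^1}^2 = \int_{\Rset^N}\bigl(I_\alpha\ast\abs{U_\alpha}^p\bigr)\abs{U_\alpha}^p
\le \norm{U_\alpha}_{L^{2p}}^{2p} + C\alpha \norm{U_\alpha}_{H^1}^{2p}
\le (C' + C\alpha)\norm{U_\alpha}_{H^1}^{2p},
\]
so $\norm{U_\alpha}_{H^1} \ge c > 0$ for all sufficiently small $\alpha$. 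These two bounds and Remark~\ref{convolution1} yield $\Phi_{2p}(U_\alpha) = c_\alpha^\gst + O(\alpha)$ and also $\bigabs{\norm{U_\alpha}_{H^1}^2 - \norm{U_\alpha}_{L^{2p}}^{2p}} = O(\alpha)$ with $\norm{U_\alpha}_{L^{2p}}^{2p}$ bounded away from $0$. Let $t_\alpha^*>0$ be such that $t_\alpha^* U_\alpha$ lies on the Nehari manifold of $\Phi_{2p}$, i.e.\ $(t_\alpha^*)^{2p-2} = \norm{U_\alpha}_{H^1}^2/\norm{U_\alpha}_{L^{2p}}^{2p}$; the previous relation forces $t_\alpha^* \to 1$, hence $\Phi_{2p}(t_\alpha^* U_\alpha) - \Phi_{2p}(U_\alpha) \to 0$. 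By \eqref{eqSchrodingerGroundstateMinimax} applied to $U_\alpha$,
\[
\gamma_{2p} \le \max_{t \ge 0}\Phi_{2p}(t U_\alpha) = \Phi_{2p}(t_\alpha^* U_\alpha) = \Phi_{2p}(U_\alpha) + o(1) = c_\alpha^\gst + o(1),
\]
giving $\liminf_{\alpha \to 0} c_\alpha^\gst \ge \gamma_{2p}$.

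The main obstacle is the \emph{uniform two-sided bound} on $\norm{U_\alpha}_{H^1}$: without the lower bound, the Nehari projection $t_\alpha^*$ could blow up, and the comparison $\Phi_{2p}(t_\alpha^* U_\alpha) = \Phi_{2p}(U_\alpha) + o(1)$ would fail. The lower bound is delicate because it needs the Sobolev embedding at the exponent $2p$, which is precisely why the hypothesis $\frac{1}{p} > 1 - \frac{2}{N}$ is used here; everything else reduces to applications of Remark~\ref{convolution1}.
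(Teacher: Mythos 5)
Your proof is correct and follows essentially the same route as the paper: both use the minimax characterizations of $c_\alpha^\gst$ and $\gamma_{2p}$ together with the uniform error estimate of Remark~\ref{convolution1}, and both rely on the $H^1$-boundedness of $U_\alpha$ coming from the Nehari identity. Your explicit derivation of a uniform positive lower bound on $\norm{U_\alpha}_{H^1(\Rset^N)}$ to control the Nehari projection $t_\alpha^*$ spells out a step the paper leaves implicit, but it does not change the strategy.
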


\begin{proofclaim}

Given $u \in H^1(\Rset^N) \setminus \{0\}$, by Lemma~\ref{convolution0} we have, as \(\alpha \to 0\),
\begin{align*}
  c_{\alpha}^{\gst} \leq \max_{t>0} J_{\alpha}( t u) = \frac{t^2}{2} \Big (\int_{\Rset^N} |\nabla u|^2 + \abs{u}^2 \Big) - \frac{t^{2p}}{2p} \int_{\Rset^N} \bigl(I_\alpha \ast \abs{u}^p\bigr)\abs{u}^p  \\ \to \max_{t>0} \frac{t^2}{2} \Big (\int_{\Rset^N} |\nabla u|^2 + \abs{u}^2 \Big )- \frac{t^{2p}}{2p} \int_{\Rset^N} \abs{u}^{2p} = \max_{t>0} \Phi_{2p}(tu).
\end{align*}
Taking  the infimum with respect to \(u \in H^1(\Rset^N) \setminus \{0\}\), we deduce that
\begin{equation}
\label{ineqClaim1Limsup}
\limsup_{\alpha \to 0}c_{\alpha}^{\gst} \leq \gamma_{2p}.
\end{equation}
Since
\[
  c_\alpha^{\gst} = \Big (\frac 1 2 - \frac{1}{2p} \Big) \Big (\int_{\Rset^N} |\nabla U_\alpha|^2 + U_\alpha^2 \Big),
\]
the groundstate solution $U_\alpha$ remains bounded in $H^1(\Rset^N)$ as $\alpha \to 0$. By Lemma~\ref{convolution0} again, we also have
\[
c_{\alpha}^{\gst} =J_{\alpha}(U_\alpha) = \max_{t>0} J_{\alpha}(t U_\alpha) = \max_{t>0} \Phi_{2p}(tU_\alpha) + o (\alpha),
\]
as \(\alpha \to 0\).
This implies
\begin{equation}
\label{ineqClaim1Liminf}
  \liminf_{\alpha \to 0}
    c_{\alpha}^{\gst}
  \ge \gamma_{2 p}.
\end{equation}
The claims follows from the combination of the inequalities \eqref{ineqClaim1Limsup} and \eqref{ineqClaim1Liminf}.
\end{proofclaim}

\begin{claim}
\label{claim0Bounded}
The family \((u_\alpha)_{\alpha > 0}\) is bounded in \(H^1 (\Rset^N)\) as \(\alpha \to 0\).


\end{claim}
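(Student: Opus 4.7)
The plan is to exploit the fact that each $u_\alpha$ is a critical point of $J_\alpha$, so in particular it lies on the Nehari manifold $\mathcal{N}_\alpha$. Testing the equation against $u_\alpha$ itself yields
\[
 \int_{\Rset^N} \bigl(\abs{\nabla u_\alpha}^2 + \abs{u_\alpha}^2\bigr) = \int_{\Rset^N} \bigl(I_\alpha \ast \abs{u_\alpha}^p\bigr) \abs{u_\alpha}^p,
\]
so that on $\mathcal{N}_\alpha$ the functional reduces to a multiple of the $H^1$-norm squared:
\[
 J_\alpha (u_\alpha) = \Bigl(\tfrac{1}{2} - \tfrac{1}{2p}\Bigr) \int_{\Rset^N} \bigl(\abs{\nabla u_\alpha}^2 + \abs{u_\alpha}^2\bigr).
\]
Since \(p > 1\), the prefactor is a strictly positive constant independent of $\alpha$.

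Next I would use the hypothesis $J_\alpha (u_\alpha) \le 2 c_\alpha^{\gst}$ together with Claim~\ref{claim0Gst}, which tells us that $c_\alpha^{\gst} \to \gamma_{2p}$ as $\alpha \to 0$. In particular, $c_\alpha^{\gst}$ remains bounded as $\alpha \to 0$, and therefore so does $J_\alpha (u_\alpha)$. Combining this with the identity above, we obtain
\[
 \int_{\Rset^N} \bigl(\abs{\nabla u_\alpha}^2 + \abs{u_\alpha}^2\bigr) \le \frac{4p}{p-1}\, c_\alpha^{\gst},
\]
and the right-hand side is uniformly bounded as $\alpha \to 0$, proving the claim.

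There is essentially no obstacle here: this is the standard Nehari-type computation, the only non-trivial input being the already established fact that the groundstate levels $c_\alpha^{\gst}$ stay bounded as $\alpha \to 0$, which was the content of Claim~\ref{claim0Gst}. The claim will be used in the sequel to extract weakly convergent subsequences and to apply the asymptotics of the Riesz potential energy from Lemma~\ref{convolution0} and Remark~\ref{convolution1}, where boundedness in $H^1 (\Rset^N)$ is precisely the hypothesis required.
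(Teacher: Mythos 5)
Your proof is correct and follows essentially the same line as the paper: test the equation against $u_\alpha$ to get the Nehari identity, conclude $J_\alpha(u_\alpha) = \bigl(\tfrac{1}{2}-\tfrac{1}{2p}\bigr)\norm{u_\alpha}_{H^1}^2$, and combine the hypothesis $J_\alpha(u_\alpha)\le 2c_\alpha^{\gst}$ with the convergence $c_\alpha^{\gst}\to\gamma_{2p}$ from Claim~\ref{claim0Gst}. Nothing is missing.
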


\begin{proofclaim}
By assumption, we have for each \(\alpha \in (0, N)\) \(I_{\alpha}(u_\alpha) \le 2 c_{\alpha} \).
Since for each \(\alpha \in (0, N)\), we also have \(\dualprod{I_\alpha'(u_\alpha)}{u_\alpha}=0\), we deduce
\[
 \int_{\Rset^N} \abs{\nabla u_\alpha}^2 + \abs{u_\alpha}^2
 = \frac{2 p}{p - 1} I_\alpha (u_\alpha) \le \frac{2 p}{p - 1} 2 c_\alpha.
\]
The claim follows then from Claim~\ref{claim0Gst}.
\end{proofclaim}

\begin{claim}%
\label{claimTheorem0SignChanges}%
One has
\begin{equation*}
 \liminf_{\alpha \to 0} \int_{\Rset^N} \abs{\nabla u_\alpha^\pm}^2 + \abs{u_\alpha^\pm}^2 =
 \liminf_{\alpha \to 0} \int_{\Rset^N} \bigl(I_\alpha \ast \abs{u_\alpha}^p\bigr) \abs{u_\alpha^\pm}^p > 0.
\end{equation*}
\end{claim}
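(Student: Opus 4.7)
The first step is to test the equation $J'_\alpha(u_\alpha)=0$ against $u_\alpha^+$ and $u_\alpha^-$ separately. The standard Nehari-type computation for sign-changing test functions yields the identity
\[
 \int_{\Rset^N}\abs{\nabla u_\alpha^{\pm}}^2+\abs{u_\alpha^{\pm}}^2=\int_{\Rset^N}\bigl(I_\alpha\ast\abs{u_\alpha}^p\bigr)\abs{u_\alpha^{\pm}}^p
\]
for every $\alpha$, which already accounts for the equality of the two $\liminf$s in the claim; only the strict positivity requires a genuine argument.

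To prove the positivity I would exploit the disjoint-support identity $\abs{u_\alpha}^p\abs{u_\alpha^{\pm}}^p=\abs{u_\alpha^{\pm}}^{2p}$, a direct consequence of $u_\alpha^+u_\alpha^-=0$. Lemma~\ref{convolution0} applied with $f=\abs{u_\alpha}^p$ and $g=\abs{u_\alpha^{\pm}}^p$ (the required $L^q$ and $W^{1,r}$ bounds on $f$ and $L^2$ bound on $g$ being produced by Sobolev embeddings as in Remark~\ref{convolution1}, together with the $H^1$-boundedness of $u_\alpha$ from Claim~\ref{claim0Bounded}) then gives
\[
 \int_{\Rset^N}\bigl(I_\alpha\ast\abs{u_\alpha}^p\bigr)\abs{u_\alpha^{\pm}}^p=\int_{\Rset^N}\abs{u_\alpha^{\pm}}^{2p}+O\bigl(\alpha\,\norm{u_\alpha^{\pm}}_{H^1}^p\bigr).
\]
Since $\frac{1}{p}>1-\frac{2}{N}$, the Sobolev embedding $H^1(\Rset^N)\hookrightarrow L^{2p}(\Rset^N)$ is continuous, hence $\int\abs{u_\alpha^{\pm}}^{2p}\le C\norm{u_\alpha^{\pm}}_{H^1}^{2p}$. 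Combining with the Nehari identity produces
\[
 \norm{u_\alpha^{\pm}}_{H^1}^2 \le C\,\norm{u_\alpha^{\pm}}_{H^1}^{2p}+C\alpha\,\norm{u_\alpha^{\pm}}_{H^1}^p.
\]

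Since $u_\alpha^{\pm}\neq 0$, dividing by $\norm{u_\alpha^{\pm}}_{H^1}^2$ yields
\[
 1\le C\,\norm{u_\alpha^{\pm}}_{H^1}^{2p-2}+C\alpha\,\norm{u_\alpha^{\pm}}_{H^1}^{p-2}.
\]
The decisive point is that the standing hypothesis $p\ge 2$ (equivalent to $\frac{1}{p}\le\frac{1}{2}$) makes both exponents $2p-2$ and $p-2$ nonnegative, so if $\norm{u_\alpha^{\pm}}_{H^1}\to 0$ along a subsequence, the right-hand side would also tend to $0$ as $\alpha\to 0$, contradicting the lower bound $1$. Hence $\liminf_{\alpha\to 0}\norm{u_\alpha^{\pm}}_{H^1}>0$, as required.

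I expect the only genuinely delicate point to be the bookkeeping in Lemma~\ref{convolution0} when $f\neq g$; if one prefers to stay closer to Remark~\ref{convolution1}, one can instead split $\abs{u_\alpha}^p=(u_\alpha^+)^p+(u_\alpha^-)^p$, apply the remark to each diagonal term, and control the mixed Riesz integral uniformly in $\alpha$ using the non-negativity of $I_\alpha$ and the Hardy--Littlewood--Sobolev inequality. Apart from this technicality the argument is quite rigid: it is essentially forced by the Nehari identity, and the bootstrap closes precisely thanks to $p\ge 2$, so the borderline case $p=2$ (for which $\norm{u_\alpha^\pm}_{H^1}^{p-2}=1$) is exactly where the absorption $C\alpha\to 0$ becomes necessary.
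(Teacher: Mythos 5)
Your argument is correct, and it is genuinely different from (and in one respect cleaner than) the paper's. The paper splits the proof into two cases. For $p>2$ it avoids the asymptotic expansion of the Riesz energy entirely: applying the optimal Hardy--Littlewood--Sobolev inequality and the Sobolev inequality directly to the Nehari identity gives
\[
 1 \le C\,C_{N,\alpha}\,\norm{u_\alpha}_{H^1(\Rset^N)}^{p}\,\norm{u_\alpha^{\pm}}_{H^1(\Rset^N)}^{p-2},
\]
and since $C_{N,\alpha}\to 1$ and $\norm{u_\alpha}_{H^1(\Rset^N)}$ is bounded, $\norm{u_\alpha^{\pm}}_{H^1(\Rset^N)}$ is bounded away from zero. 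The exponent $p-2$ degenerates at $p=2$, so the paper then runs a separate contradiction argument for $p=2$, normalizing $v_\alpha=u_\alpha^-/\norm{u_\alpha^-}_{H^1(\Rset^N)}$ and using Lemma~\ref{convolution0} together with the disjoint-support fact $\abs{u_\alpha^+}^2\abs{v_\alpha}^2=0$. You instead use the disjoint-support identity $\abs{u_\alpha}^p\abs{u_\alpha^{\pm}}^p=\abs{u_\alpha^{\pm}}^{2p}$ and Lemma~\ref{convolution0} uniformly in $p\ge 2$, which upgrades the inequality to
\[
 1 \le C\,\norm{u_\alpha^{\pm}}_{H^1(\Rset^N)}^{2p-2} + C\alpha\,\norm{u_\alpha^{\pm}}_{H^1(\Rset^N)}^{p-2}.
\]
Here the dominant exponent $2p-2\ge 2$ is always strictly positive, and the boundary case $p=2$ (where $p-2=0$) is rescued by the vanishing factor $\alpha$, exactly as you say. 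This buys a single unified argument in place of two, at the small cost of invoking the asymptotic Riesz expansion even for $p>2$, where the paper's more elementary HLS estimate already suffices; the mechanism you isolate (disjoint support plus Lemma~\ref{convolution0}) is precisely the one the paper deploys, but only in the borderline case $p=2$.
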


\begin{proofclaim}
We recall that the optimal Hardy--Littlewood--Sobolev inequality
\citelist{%
  \cite{LiebLoss2001}*{Theorem 4.3}%
  \cite{Lieb1983}*{Theorem 3.1}%
} %
states that if \(f, g \in L^\frac{2N}{N + \alpha} (\Rset^N)\), then
\begin{equation}
\label{ineqOptimalHLSRiesz}
  \int_{\Rset^N} (I_\alpha \ast f) g
  \le C_{N, \alpha} \Bigl(\int_{\Rset^N} \abs{f}^\frac{2N}{N + \alpha} \Bigr)^\frac{N + \alpha}{2 N}
   \Bigl(\int_{\Rset^N} \abs{g}^\frac{2N}{N + \alpha} \Bigr)^\frac{N + \alpha}{2 N},
\end{equation}
and the optimal constant \(C_{N, \alpha}\) is given in terms of the gamma function \(\Gamma\) by
\begin{equation}
\label{ineqOptimalHLSRieszConstant}
  C_{N, \alpha} = \frac{\Gamma (\frac{N - \alpha}{2})}{2^\alpha \pi^{\alpha/2} \Gamma (\frac{N + \alpha}{2}) }
  \biggl(\frac{\Gamma(\frac{N}{2})}{\Gamma (N)}\biggr)^\frac{\alpha}{N}.
\end{equation}
If \(p > 2\), we observe that, by the Hardy--Littlewood--Sobolev inequality \eqref{ineqOptimalHLSRiesz} and by the classical Sobolev inequality
\[
\begin{split}
 \int_{\Rset^N} \abs{\nabla u_\alpha^\pm}^2 + \abs{u_\alpha^\pm}^2
 &= \int_{\Rset^N} \bigl(I_\alpha \ast \abs{u_\alpha}^p\bigr) \abs{u_\alpha^\pm}^p\\
 &\le C C_{N, \alpha} \Bigl(\int_{\Rset^N} \abs{\nabla u_\alpha}^2 + \abs{u_\alpha}^2 \Bigr)^\frac{p}{2}
 \Bigl(\int_{\Rset^N} \abs{\nabla u_\alpha^\pm}^2 + \abs{u_\alpha^\pm}^2 \Bigr)^\frac{p}{2}.
\end{split}
\]
We deduce therefrom that
\[
 1 \le C C_{N, \alpha} \Bigl(\int_{\Rset^N} \abs{\nabla u_\alpha}^2 + \abs{u_\alpha}^2 \Bigr)^\frac{p}{2}
 \Bigl(\int_{\Rset^N} \abs{\nabla u_\alpha^\pm}^2 + \abs{u_\alpha^\pm}^2 \Bigr)^\frac{p - 2}{2}.
\]
In view of \eqref{ineqOptimalHLSRieszConstant},
we have
\[
 \lim_{\alpha \to 0} C_{N, \alpha} = 1,
\]
and the constant \(C_{N, \alpha}\) remains thus bounded as \(\alpha \to 0\).
Since the family \((u_\alpha)_{\alpha \in (0, N)}\) is bounded in \(H^1 (\Rset^N)\) in view of Claim~\ref{claim0Bounded},
we have, if \(p > 2\),
\[
 \liminf_{\alpha \to 0} \int_{\Rset^N}\abs{\nabla u_\alpha^\pm}^2 + \abs{u_\alpha^\pm}^2
 > 0.
\]
The claim is thus proved in the case \(p > 2\).

If \(p = 2\) we adapt the strategy of \cite{GhimentiMorozVanSchaftingen}. Since for each \(\alpha \in (0, N)\)
\[
 \int_{\Rset^N} \abs{\nabla u_\alpha}^2 + \abs{u_\alpha}^2
 = \int_{\Rset^N} \bigl(I_\alpha \ast \abs{u_\alpha}^2\bigr) \abs{u_\alpha^\pm}^2
 \le C C_{N, \alpha} \Bigl(\int_{\Rset^N} \abs{\nabla u_\alpha}^2 + \abs{u_\alpha}^2 \Bigr)^2,
\]
the functions \(u_\alpha\) stay away from \(0\) as \(\alpha \to 0\):
\[
  \liminf_{\alpha \to 0} \int_{\Rset^N}\abs{\nabla u_\alpha}^2 + \abs{u_\alpha}^2
 > 0.
\]
Without loss of generality, we can assume that
\[
 \liminf_{\alpha \to 0} \int_{\Rset^N} \abs{\nabla u_\alpha^+}^2 + \abs{u_\alpha^+}^2 > 0.
\]
We are going to prove that
\[
 \liminf_{\alpha \to 0} \int_{\Rset^N} \abs{\nabla u_\alpha^-}^2 + \abs{u_\alpha^-}^2 > 0.
\]
Otherwise, there would exist a sequence \((\alpha_n)_{n \in \Nset}\) in \((0, N)\) converging to \(0\)
such that
\[
 \lim_{n \to \infty} \int_{\Rset^N}\abs{\nabla u_{\alpha_n}^-}^2 + \abs{u_{\alpha_n}^-}^2 = 0.
\]
We could then define for each \(\alpha \in (0, N)\) the normalized negative part of \(u_\alpha\) by
\[
 v_\alpha = \frac{u_\alpha^-}{\norm{u_\alpha^-}_{H^1 (\Rset^N)}}.
\]
By the Hardy--Littlewood--Sobolev and by the Sobolev inequalities, for every \(\alpha \in (0, N)\),
\[
  \int_{\Rset^N} \bigl(I_{\alpha} \ast \abs{u_{\alpha}^-}^2\bigr)\abs{u_{\alpha}^-}^2
  \le C \Bigl(\int_{\Rset^N} \abs{\nabla u_{\alpha}^-}^2 + \abs{u_{\alpha}^-}^2 \Bigr)^2,
\]
and therefore, we would write
\[
\begin{split}
 1 &= \frac{\displaystyle \int_{\Rset^N} \bigl(I_{\alpha_n} \ast \abs{u_{\alpha_n}}^2\bigr)\abs{u_{\alpha_n}^-}^2}
 {\displaystyle \int_{\Rset^N} \abs{\nabla u_{\alpha_n}^-}^2 + \abs{u_{\alpha_n}^-}^2}
 =\lim_{n \to \infty} \frac{\displaystyle \int_{\Rset^N} \bigl(I_{\alpha_n} \ast \abs{u_{\alpha_n}^+}^2\bigr)\abs{u_{\alpha_n}^-}^2}
 {\displaystyle \int_{\Rset^N} \abs{\nabla u_{\alpha_n}^-}^2 + \abs{u_{\alpha_n}^-}^2}\\
 &=\lim_{n \to \infty} \int_{\Rset^N} \bigl(I_{\alpha_n} \ast \abs{u_{\alpha_n}^+}^2\bigr)\abs{v_{\alpha_n}^-}.
\end{split}
\]
Now, by taking into account Remark~\ref{convolution1}, we would apply Lemma~\ref{convolution0} with $g=\abs{v_{\alpha_n}}^2$ to obtain
\[
 \lim_{n \to \infty} \Bigl(\int_{\Rset^N} \bigl(I_{\alpha_n} \ast \abs{v_{\alpha_n}}^2 \bigr)\abs{u_{\alpha_n}^+}^2
 -\int_{\Rset^N} \abs{u_{\alpha_n}^+}^2 \abs{v_{\alpha_n}}^2 \Bigr)= 0.
\]
But, by construction, \(\abs{u_\alpha^+}^2 \abs{v_\alpha}^2 = 0\) almost everywhere
in \(\Rset^N\), and we would thus reach a contradiction.
\end{proofclaim}

With those results in hand, we are now in condition to prove Proposition~\ref{puf}. By Lemma~\ref{convolution0} and Remark~\ref{convolution1}, if \(\alpha \to 0\), then the family \((u_{\alpha})_{\alpha \in (0, N)}\) forms a Palais--Smale sequence for the limit problem \eqref{limit0}. Moreover, by Claim~\ref{claim0Gst} and by our assumption
\[
  \limsup_{\alpha \to 0} I_{\alpha}(u_{\alpha})
  \le \limsup_{\alpha \to 0} 2 c^{gst}_\alpha
  \le 2 \gamma_{2p}.
\]
In view of  \eqref{strictOtherSolutions0}, Lemma~\ref{PS} implies that $m \leq 2$ and $u_i = \pm U$.
By Claim~\ref{claimTheorem0SignChanges}, we can assume without loss of generality that
\begin{equation} \label{justone} u_\alpha = U(\cdot-\tilde{\xi}^+_\alpha) - U(\cdot-\tilde{\xi}^-_\alpha) + o(1), \ \ |\tilde{\xi}^+_\alpha - \tilde{\xi}^-_\alpha| \to +\infty. \end{equation}
We observe that, by Fatou's lemma, for each \(\alpha \in (0, N)\),
\[
  \liminf_{\abs{\xi^+} + \abs{\xi^-} \to +\infty}
  \norm{u_\alpha - (U (\cdot - \xi^+) - U (\cdot - \xi^-))}_{H^1 (\Rset^N)}
  \ge \min \{\norm{u_\alpha}_{H^1 (\Rset^N)}, \norm{U}_{H^1 (\Rset^N)} \},
\]
By Claim~\ref{claimTheorem0SignChanges}, the right-hand side stays away from \(0\) as \(\alpha \to 0\).
When \(\alpha \in (0, N)\) is small enough, by the first part of the claim, the function
\[
  (\xi^+, \xi^-) \in \Rset^N \times \Rset^N
  \longmapsto \norm{u_\alpha - (U (\cdot - \xi^+) - U (\cdot - \xi^-))}_{H^1 (\Rset^N)}
\]
achieves thus its minimum at some pair of vectors \((\xi_{\alpha}^+, \xi_{\alpha}^-) \in \Rset^N \times \Rset^N\). By \eqref{justone}, that minimum goes to $0$, that is,
$$u_\alpha = U(\cdot-{\xi}^+_\alpha) - U(\cdot-{\xi}^-_\alpha) + o(1).$$
We note that, again by \eqref{justone}, $|\tilde{\xi}^\pm_\alpha - {\xi}^\pm_\alpha| \to 0$. Finally,  Lemma \ref{PS} implies that \(\abs{\xi_\alpha^+ - \xi_\alpha^-} \to +\infty\) as \(\alpha \to 0\).
\end{proof}

\medbreak

\begin{proof}%
[Proof of Theorem~\ref{theoremMain0}]
Since the Choquard equation \eqref{eqChoquard} is invariant under translations and rotations, we can assume that for each \(\alpha \in (0, N)\) sufficiently close to \(0\), \(\xi_{\alpha, +} = 0\)
and $\xi_{\alpha,-} = \xi_{\alpha} = (m_\alpha, 0, \dotsc, 0)$, for some $m_\alpha \to
+ \infty$.
We define then \(R_\alpha\) to be the orthogonal reflection that sends \(0\) to \(\xi_\alpha\),
that is, for each \( x = (x_1, \dotsc, x_N) \in \Rset^N\),
\[
 R_{\alpha} (x) = (m_\alpha -x_1, x_2, \dotsc, x_N).
\]
We set \(v_\alpha = u_\alpha + \Breve{u}_\alpha\), where \(\Breve{u}_\alpha = u_\alpha \circ R_\alpha\).
We define also for every such \(\alpha \in (0, N)\) the half-space
\[
 \Omega_\alpha = \Bigl\{ x=(x_1, \dotsc, x_N)  \in \Rset^N \st x_1 < \frac{m_\alpha}{2}\Bigr\}.
\]
By construction, \(v_\alpha \circ R_\alpha = v_\alpha\), and
thus the function \(v_\alpha\) is even with respect to \(\partial \Omega_\alpha\).

Our purpose is to show that for $\alpha>0$ small enough, $v_\alpha=0$, from which  Theorem~\ref{theoremMain0} will follow immediately. The proof will rely on some preliminary results:

\begin{step}
\label{claimTheorem0Orthogonality}
For any direction \(\zeta \in \Rset^N\),
\begin{align*}
  \scalprod{v_\alpha}{\partial_\zeta U}_{H^1(\Rset^N)} &=0,&
  &\scalprod{v_\alpha}{\partial_\zeta U}_{H^1 (\Omega_\alpha)}
   = o \bigl(\norm{v_\alpha}_{H^1 (\Omega_\alpha)}\big),
\end{align*}
as \(\alpha \to 0\).
\end{step}

The function
$$ (\xi^+, \xi^-) \in \Rset^N \times \Rset^N \mapsto \norm{u_\alpha - (U (\cdot - \xi^+) - U (\cdot - \xi^-))}_{H^1 (\Rset^N)}^2$$
attains a minimum at $(0, \xi_{\alpha})\in \Rset^N \times \Rset^N$. Differentiating with respect to the variable $\xi^+$ in the direction $\zeta \in \Rset^N$, we obtain
\begin{equation}%
\label{eqTheorem0torecall}
 \scalprod{u_\alpha}{\partial_\zeta U}_{H^1 (\Rset^N)} = 0.
\end{equation}
Reasoning in an analogous way on the variable $\xi^-$, we get
\[
 \scalprod{u_\alpha}{\partial_{\zeta} U (\cdot - \xi_\alpha)}_{H^1 (\Rset^N)} = 0.
\]
We now observe that
\begin{enumerate}
\item if $\zeta=\xi_{\alpha}$, then $\partial_\zeta U \circ R_{\alpha}(x)= -\partial_{\zeta} U (\cdot - \xi_\alpha)$,
\item if $\zeta \cdot \xi_{\alpha} = 0$, then $\partial_\zeta U \circ R_{\alpha}(x)= \partial_{\zeta} U (\cdot - \xi_\alpha)$,
\end{enumerate}
so that, in any case,
\[
 \scalprod{u_\alpha}{\partial_{\zeta} U (\cdot - \xi_\alpha)}_{H^1 (\Rset^N)} = \pm \scalprod{\Breve{u}_\alpha}{\partial_{\zeta} U}_{H^1 (\Rset^N)}=0.
\]
This, together with \eqref{eqTheorem0torecall}, concludes the proof of the first assertion. The second follows since $\norm{\partial_\zeta U}_{H^1(\Rset^N \setminus \Omega_\alpha)}=o(1)$.
\medbreak

\begin{step}
The function \(v_\alpha\) satisfies the linear equation
\begin{equation} \label{extra0}
 \mathcal{L}_\alpha v_\alpha = 0,
\end{equation}
where the linear operator \(\mathcal{L}_\alpha\) is defined by
\[
 \mathcal{L}_\alpha v = -\Delta v + v - (I_\alpha \ast G_\alpha v) H_\alpha
 - (I_\alpha \ast K_\alpha) L_\alpha v,
\]
with the functions \(G_\alpha\), \(H_\alpha\), \(K_\alpha\) and \(L_\alpha\) being given by
\begin{align*}
 G_\alpha
 &=  \begin{cases}
   \frac{\abs{u_\alpha}^p - \abs{\Breve{u}_\alpha}^p}{u_\alpha + \Breve{u}_\alpha}& \text{where \(u_\alpha \ne - \Breve{u}_\alpha\)},\\
   p \,\abs{u_\alpha}^{p - 2} u_\alpha & \text{elsewhere},
  \end{cases}\\
 H_\alpha
 &=\frac{1}{2} \bigl(\abs{u_\alpha}^{p - 2} u_\alpha
 - \abs{\Breve{u}_\alpha}^{p - 2} \Breve{u}_\alpha\bigr),\\
  K_\alpha &=\frac{1}{2} \bigl(\abs{u_\alpha}^p + \abs{\Breve{u}_\alpha}^p\bigr),\\
  L_\alpha &= \begin{cases}
   \frac{\abs{u_\alpha}^{p - 2} u_\alpha
 + \abs{\Breve{u}_\alpha}^{p - 2} \Breve{u}_\alpha}{u_\alpha + \Breve{u}_\alpha} & \text{where \(u_\alpha \ne -\Breve{u}_\alpha\)},\\
   (p - 1) \abs{u_\alpha}^{p - 2} & \text{elsewhere}.
  \end{cases}
\end{align*}
\end{step}

By definition of \(v_\alpha\) in terms of \(u_\alpha\) and by the equation \eqref{eqChoquard} satisfied by \(u_\alpha\), the function \(v_\alpha\) obeys the equation
\begin{equation} \label{eqTheorem0v}
 -\Delta v_\alpha + v_\alpha = (I_\alpha \ast \abs{u_\alpha}^p) \abs{u_\alpha}^{p - 2} u_\alpha \
+ (I_\alpha \ast \abs{\Breve{u}_\alpha}^p) \abs{\Breve{u}_\alpha}^{p - 2} \Breve{u}_\alpha .
\end{equation}
We observe that
\begin{multline*}
 (I_\alpha \ast \abs{u_\alpha}^p) \abs{u_\alpha}^{p - 2} u_\alpha
+  (I_\alpha \ast \abs{\Breve{u}_\alpha}^p) \abs{\Breve{u}_\alpha}^{p - 2} \Breve{u}_\alpha\\
 =\frac{1}{2} \bigl(I_\alpha \ast (\abs{u_\alpha}^p + \abs{\Breve{u}_\alpha}^p)\bigr) (\abs{u_\alpha}^{p - 2} u_\alpha
+  \abs{\Breve{u}_\alpha}^{p - 2} \Breve{u}_\alpha)\\
 + \frac{1}{2}\bigl(I_\alpha \ast (\abs{u_\alpha}^p - \abs{\Breve{u}_\alpha}^p) \bigr) (\abs{u_\alpha}^{p - 2} u_\alpha
-  \abs{\Breve{u}_\alpha}^{p - 2} \Breve{u}_\alpha).
\end{multline*}

\medskip

\begin{step}
Conclusion of the proof of Theorem~\ref{theoremMain0}.
\end{step}

As commented above, Theorem~\ref{theoremMain0} follows if we show that $v_{\alpha}=0$ for $\alpha > 0$ small enough. We assume by contradiction that there is a sequence \((\alpha_n)_{n \in \Nset}\) in \((0, N)\) that converges to \(0\) such that for every \(n \in \Nset\), \(v_{\alpha_n} \ne 0\).
For each \(n \in \Nset\), we define the normalized sequence
\[
  w_n = \frac{v_{\alpha_n}}{\norm{v_{\alpha_n}}_{H^1 (\Rset^N)}}.
\]
Without loss of generality, the sequence \((w_n)_{n \in \Nset}\) converges weakly in \(H^1(\Rset^N)\) to some function \(w \in H^1 (\Rset^N)\).

By Proposition~\ref{puf}, we have the following convergences:
\begin{equation}
\label{convNonlinPotentials}
\begin{aligned}
 G_\alpha- p\, (U^{p-1} - U^{p-1}(\cdot - \xi_\alpha)) & \to 0
    & &\text{in } L^q(\Rset^N), &&\text{if } \tfrac{1}{2} - \tfrac{1}{N} \le \tfrac{p - 1}{q} \le \tfrac{1}{2},\\
 H_\alpha - (U^{p-1} - U^{p-1}(\cdot - \xi_\alpha))  & \to 0
    & &\text{in } L^q(\Rset^N), &&\text{if } \tfrac{1}{2} - \tfrac{1}{N} \le \tfrac{p - 1}{q} \le \tfrac{1}{2},\\
 K_\alpha -( U^p + U^p(\cdot- \xi_\alpha)) & \to 0
     & &\text{in } L^q(\Rset^N), &&\text{if } \tfrac{1}{2} - \tfrac{1}{N} \le \tfrac{p}{q} \le \tfrac{1}{2},\\
 L_\alpha - (p-1) ( U^{p-2} + U^{p-2}(\cdot- \xi_\alpha))& \to 0
    & & \text{in } L^q(\Rset^N),  &&\text{if } \tfrac{1}{2} - \tfrac{1}{N} \le \tfrac{p - 2}{q} \le \tfrac{1}{2}.
\end{aligned}
\end{equation}

If we test the equation \(\mathcal{L}_{\alpha} w_\alpha\), against the function \(\varphi \in C_0^{\infty}(\Rset^N)\), we have
\begin{equation*}
\int_{\Rset^N} (\nabla w_\alpha \cdot \nabla \varphi + w_\alpha \varphi)
 = \int_{\Rset^N} (I_\alpha \ast G_\alpha w_\alpha) H_\alpha \varphi
  + (I_\alpha \ast K_\alpha ) L_\alpha w_\alpha\varphi
\end{equation*}
We now apply Lemma~\ref{convolution0} first to \(f= G_\alpha w_\alpha\) and \(g = H_\alpha \varphi\)
and next to $f= K_\alpha$ and \(g = L_\alpha w_\alpha \varphi\).
As in Remark~\ref{convolution1}, the boundedness of $\nabla f$ in $L^r(\R^N)$ for some $r>1$ follows from the Sobolev and H\"older inequalities. In combination with the asymptotic behavior of $G_\alpha$, $H_\alpha$, $K_\alpha$, and $L_\alpha$ in \eqref{convNonlinPotentials}, we deduce that $w$ is a weak solution of
the equation
\[
 -\Delta w + w = (2 p - 1) U^{2 p - 2} w.
\]
By Step~\ref{claimTheorem0Orthogonality} and the nondegeneracy of the limiting problem \eqref{limit0} we have \(w = 0\).

For each \(n \in \Nset\), we now test the equation \(\mathcal{L}_{\alpha_n} v_{\alpha_n} = 0\) against $v_{{\alpha_n}}$ and divide by $\norm{v_{{\alpha_n}}}^2_{H^1 (\Rset^N)}$, to obtain
\begin{equation}
\label{limit1a}
\begin{split}
 1 &=  \int_{\Rset^N} \abs{\nabla w_n}^2 + \abs{w_n}^2\\
 &=  \int_{\Rset^N} \bigl(I_{\alpha_n} \ast (G_{\alpha_n} w_n)\bigr) H_{\alpha_n} w_n
 + \bigl(I_{\alpha_n} \ast K_{\alpha_n}\bigr) L_{\alpha_n} \abs{w_n}^2.
\end{split}
\end{equation}
By Lemma~\ref{convolution0}, on the other hand, we have as \(n \to \infty\)
\begin{equation}
\label{limit10}
\begin{split}
 \int_{\Rset^N} \bigl(I_{\alpha_n} \ast (G_{\alpha_n} w_n)\bigr) &H_{\alpha_n} w_n
 + \bigl(I_{\alpha_n} \ast K_{\alpha_n}\bigr) L_{\alpha_n} \abs{w_n}^2\\
 &= \int_{\Rset^N} \bigl(G_{\alpha_n} H_{\alpha_n}
 + K_{\alpha_n} L_{\alpha_n}\bigr) \abs{w_n}^2 + o(1)\\
 & =2 \int_{\Omega_{\alpha_n}} \bigl(G_{\alpha_n} H_{\alpha_n}
 + K_{\alpha_n} L_{\alpha_n}\bigr) \abs{w_n}^2 + o(1) \\
 &= 2 (2 p - 1) \int_{\Rset^N} \abs{U}^{2p - 2} \abs{w_n}^2 + o(1),
\end{split}
\end{equation}
in view of \eqref{convNonlinPotentials}.
Since the sequence \((w_n)_{n \in \Nset}\) converges weakly to $0$ in \(H^1 (\Rset^N)\), we have in view of Rellich's compactness theorem and the decay of \(U\) at infinity,
\begin{equation}
\label{limit1b}
 \lim_{n \to \infty} \int_{\Rset^N} \abs{U}^{2p - 2} \abs{w_n}^2 = 0,
\end{equation}
 By taking into account \eqref{limit1a}, \eqref{limit10} and \eqref{limit1b} are in contradiction. Hence \(v_\alpha = 0\) for \(\alpha\) close enough to \(0\). The proof of Theorem~\ref{theoremMain0} is thus complete.
\end{proof}

\section{Proof of Theorem~\ref{theoremMainN}}
\label{sectionProofTheoremMainN}

We now turn our attention to the proof of Theorem~\ref{theoremMainN}.
The main difficulty with respect to Theorem \ref{theoremMain0} comes from the fact that the asymptotics of Riesz potential energy
are not as accurate when \(\alpha \to N\) as in the case \(\alpha \to 0\).
This requires additional steps in the proof.

To alleviate the notations, we define \(\underline{\alpha} =  \max \{ 0, (N - 2) p - N \}\).
For \(\alpha \in (\underline{\alpha}, N)\), we first set
\[
 \Tilde{u}_\alpha = (A_\alpha)^\frac{1}{2p - 2} u_\alpha,
\]
where \(A_\alpha\) is the normalizing constant in the Riesz potential \(I_\alpha\)
coming from \eqref{eqRiesz}.
The function \(\Tilde{u}_{\alpha}\) satisfies then the equation
\begin{equation} \label{cov}
 -\Delta \Tilde{u}_\alpha + \Tilde{u}_\alpha
 = \bigl(\Tilde{I}_\alpha \ast \abs{\Tilde{u}_\alpha}^p\bigr) \abs{\Tilde{u}_\alpha}^{p - 2} \Tilde{u}_\alpha,
\end{equation}
with the unnormalized Riesz potential \(\Tilde{I}_\alpha\) that was defined in \eqref{eqDefUnnormalizedRieszPotential}.
We let \(\Tilde{J}_\alpha\), \(\Tilde{c}_\alpha^\gst\) and \(\Tilde{c}_\alpha^\nod\)
denote the corresponding functional, groundstate and least energy  nodal solution levels.

In the next proposition we prove an analogue to Proposition~\ref{puf} of the previous section.

\begin{proposition}
\label{pufii}
If \(\Tilde{u}_\alpha\) are least energy nodal solutions of \eqref{cov}, then
\[
 \lim_{\alpha \to N} \ \ \inf_{\xi^+, \xi^- \in \Rset^N} \norm{\Tilde{u}_\alpha - (V (\cdot - \xi^+) - V (\cdot - \xi^-))}_{H^1 (\Rset^N)} = 0,
\]
where \(V=V_2\) denotes a groundstate of \eqref{limit1} for $\mu=2$.
Moreover, for \(\alpha \in (\underline{\alpha}, N)\) close enough to \(N\) there exists vectors \(\xi_\alpha^+, \xi_\alpha^-\) such that
\begin{equation*}
 \norm{\Tilde{u}_\alpha - (V (\cdot - \xi_\alpha^+) - V(\cdot - \xi_\alpha^-))}_{H^1 (\Rset^N)}
 =\inf_{\xi^+, \xi^- \in \Rset^N} \norm{\Tilde{u}_\alpha - (V (\cdot - \xi^+) - V (\cdot - \xi^-))}_{H^1 (\Rset^N)};
\end{equation*}
moreover they satisfy the asymptotics
\begin{equation*}
 \lim_{\alpha \to N} \abs{\xi_\alpha^+ - \xi_\alpha^-} = \infty,
 \qquad \text{and}\qquad
 \lim_{\alpha \to N} \abs{\xi_\alpha^+ - \xi_\alpha^-}^{N - \alpha} = 1.
\end{equation*}
\end{proposition}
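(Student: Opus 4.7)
The strategy parallels the proof of Proposition~\ref{puf} but with two substantial additional difficulties: the asymptotics of \(\Tilde{I}_\alpha\) from \S\ref{sectionDelocalizingRiesz} are only unilateral, and the interaction between the two bumps has to be tracked precisely in order to produce the sharp rate \(\abs{\xi_\alpha^+ - \xi_\alpha^-}^{N - \alpha} \to 1\). I would first establish that \(\Tilde{c}_\alpha^{\gst} \to \kappa_{p, 1}\) as \(\alpha \to N\); the upper bound comes from the minimax characterization \eqref{eqChoquardGroundstateMinimax} and Lemma~\ref{lemmaConv}, while the lower bound uses Lemma~\ref{lemmaConvEstimate} together with boundedness of \((\Tilde{U}_\alpha)\) in \(H^1 (\Rset^N)\) given by the Nehari identity \(\Tilde{J}_\alpha(u) = \tfrac{p - 1}{2p} \norm{u}_{H^1}^2\). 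Inserting the trial pair \(V_1(\cdot) - V_1(\cdot - d_\alpha e_1)\) (with \(V_1\) the groundstate of \eqref{limit1} for \(\mu = 1\) and \(d_\alpha \to \infty\) chosen so that \(d_\alpha^{N - \alpha} \to 1\)) into the minimax characterization of \(\Tilde{c}_\alpha^{\nod}\) and using Lemma~\ref{lemmaConv} on the cross term of the Riesz energy, I would obtain
\[
  \limsup_{\alpha \to N} \Tilde{c}_\alpha^{\nod} \le 2 \kappa_{p, 2},
\]
using the identity \(\kappa_{p, 2} = 2^{-1/(p - 1)} \kappa_{p, 1}\) from \eqref{eqKappa}. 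The Nehari identity applied to \(\Tilde{u}_\alpha\) then yields boundedness of \((\Tilde{u}_\alpha)\) in \(H^1 (\Rset^N)\), and the Hardy--Littlewood--Sobolev argument of Claim~\ref{claimTheorem0SignChanges} adapts to ensure that \(\norm{\Tilde{u}_\alpha^\pm}_{H^1}\) and \(\norm{\Tilde{u}_\alpha^\pm}_{L^p}\) stay bounded away from \(0\).

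A Lions-type concentration-compactness argument applied to \(\Tilde{u}_\alpha^\pm\) then produces sequences \(\xi_\alpha^\pm\) in \(\Rset^N\) such that, along a subsequence, \(\Tilde{u}_\alpha^\pm(\cdot + \xi_\alpha^\pm) \rightharpoonup w^\pm \ne 0\) weakly in \(H^1 (\Rset^N)\). The disjoint supports of \(\Tilde{u}_\alpha^\pm\) force \(d_\alpha := \abs{\xi_\alpha^+ - \xi_\alpha^-} \to \infty\); passing to a further subsequence we may assume \(d_\alpha^{-(N - \alpha)} \to \varrho \in [0, 1]\). Applying Lemma~\ref{lemmaConv} to \(\Tilde{I}_{\alpha_n} \ast |\Tilde{u}_{\alpha_n}^\pm|^p\) evaluated near the opposite bump (with translation factor whose size is exactly \(d_{\alpha_n}\), producing the limiting constant \(\varrho\)), and passing to the limit in \eqref{cov} after recentering at \(\xi_{\alpha}^\pm\) and testing against a smooth compactly supported function, I identify each \(w^\pm\) as a solution of an NLS-type equation \(-\Delta w + w = \mu_\pm \abs{w}^{p - 2} w\) with \(\mu_\pm = \int \abs{w^\pm}^p + \varrho \int \abs{w^\mp}^p\).

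The decisive step is the identification \(\varrho = 1\). Expanding \(\Tilde{J}_\alpha(\Tilde{u}_\alpha)\) in positive and negative parts, combining the weak lower semicontinuity of the \(H^1\) norm on the kinetic side with the exact Lemma~\ref{lemmaConv}-asymptotics on the self-Riesz terms \((\int\abs{w^\pm}^p)^2\) and on the cross-term \(2 \varrho (\int\abs{w^+}^p)(\int\abs{w^-}^p)\), and using the minimax characterization together with the \(u \mapsto -u\) symmetry to force \(\int\abs{w^+}^p = \int\abs{w^-}^p\), I would obtain
\[
  \Tilde{c}_\alpha^{\nod}
  \ge \Psi_{p, 1 + \varrho}(w^+) + \Psi_{p, 1 + \varrho}(w^-) + o(1)
  \ge 2 \kappa_{p, 1 + \varrho},
\]
where the second inequality uses that under this symmetry each \(w^\pm\) lies on the Nehari manifold of \(\Psi_{p, 1 + \varrho}\). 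Combined with the upper bound \(\limsup \Tilde{c}_\alpha^{\nod} \le 2 \kappa_{p, 2}\) from Step~1 and the strictly decreasing character of \(\mu \mapsto \kappa_{p, \mu}\) in \eqref{eqKappa}, this forces \(\varrho = 1\). Lemma~\ref{salut} then identifies \(w^\pm = \pm V\) for \(V = V_2\) (up to translations absorbed into \(\xi_\alpha^\pm\)), and the convergence of the \(H^1\) norms upgrades the weak convergences to strong ones.

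The attainment of the infimum for \(\alpha\) close enough to \(N\) follows as in Proposition~\ref{puf} from continuity together with the coercivity bound
\[
  \liminf_{\abs{\xi^+} + \abs{\xi^-} \to \infty} \norm{\Tilde{u}_\alpha - V(\cdot - \xi^+) + V(\cdot - \xi^-)}_{H^1}
  \ge \min \bigl\{ \norm{\Tilde{u}_\alpha}_{H^1},\, \norm{V}_{H^1} \bigr\},
\]
while \(\abs{\xi_\alpha^+ - \xi_\alpha^-}^{N - \alpha} \to 1\) is exactly the assertion \(\varrho = 1\) proved above. The main obstacle will be this identification of \(\varrho = 1\) (together with its companion mass symmetry \(\int \abs{w^+}^p = \int \abs{w^-}^p\)): it relies essentially on the minimizing (and not merely low-energy) character of the nodal solutions and on the delicate interplay between weak lower semicontinuity and the one-sided estimate of Lemma~\ref{lemmaConvEstimate}, which is why the simpler strategy behind Theorem~\ref{theoremMain0} does not carry over and the minimality hypothesis in Theorem~\ref{theoremMainN} is essential.
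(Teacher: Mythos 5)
Your three preliminary claims (convergence of $\Tilde{c}_\alpha^{\gst}$ to $\kappa_{p,1}$, boundedness of $\Tilde{u}_\alpha$, nondegeneracy of $\Tilde{u}_\alpha^\pm$) match the paper, the coercivity/Fatou argument for existence of the minimizing pair $(\xi_\alpha^+,\xi_\alpha^-)$ is the same, and the energy comparison $\Tilde{c}_\alpha^\nod \ge 2\kappa_{p,1+\varrho}$ against $\limsup \Tilde{c}_\alpha^\nod \le 2\kappa_{p,2}$ together with the monotonicity of $\mu \mapsto \kappa_{p,\mu}$ is an attractive alternative route to $\varrho=1$. But two intermediate steps do not hold as written. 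First, to identify the weak limits $w^\pm$ as solutions of $-\Delta w + w = \mu_\pm\abs{w}^{p-2}w$, you apply Lemma~\ref{lemmaConv} to $\Tilde{I}_{\alpha_n}\ast\abs{\Tilde{u}_{\alpha_n}^\pm}^p$; that lemma requires \emph{strong} $L^1$ convergence of $\abs{\Tilde{u}_{\alpha_n}^\pm(\cdot+\xi_\alpha^\pm)}^p$, and concentration-compactness gives only weak $H^1$ convergence, so mass may escape to infinity. If it does, the recentered Riesz potential no longer converges locally to the constant $\int\abs{w^\pm}^p + \varrho\int\abs{w^\mp}^p$ (the escaping mass contributes a term depending on its rate of travel), so the limit equation you claim is not justified. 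Second, the assertion that the $u\mapsto -u$ symmetry of the functional forces $\int\abs{w^+}^p = \int\abs{w^-}^p$ does not follow: symmetry of a functional says nothing about the balance of an individual minimizer, and you would need a separate argument.

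Both gaps concern exactly the compactness one does not yet have, and the paper circumvents them by a different mechanism. After the $\limsup$ bound, it introduces the projections $t_\alpha\Tilde{u}_\alpha^+$, $s_\alpha\Tilde{u}_\alpha^- \in \mathcal{N}_{p,2}$, and by combining the Cauchy--Schwarz bound on the cross Riesz term (positive definiteness of the Riesz energy) with the one-sided Lemma~\ref{lemmaConvEstimate} it proves $\Tilde{c}_\alpha^\nod \ge \Psi_{p,2}(t_\alpha\Tilde{u}_\alpha^+) + \Psi_{p,2}(s_\alpha\Tilde{u}_\alpha^-) + O(N-\alpha) \ge 2\kappa_{p,2}$. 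Since the two bounds match, the Cauchy--Schwarz step must become an equality asymptotically (this is \eqref{eqRieszYoungbis}), and a Young's-inequality argument then forces $t_\alpha, s_\alpha \to 1$. Because $\Psi_{p,2}(t_\alpha\Tilde{u}_\alpha^\pm) \to \kappa_{p,2}$, Lemma~\ref{salut} (which encapsulates the compactness of minimizing sequences on the Nehari manifold) gives strong $H^1$ convergence to translates of $V=V_2$; this bypasses both your weak-convergence problem and the need for a separate mass-symmetry argument. Finally, $\varrho=1$ is read off from the limiting form of the Cauchy--Schwarz equality applied to the cross term via Lemma~\ref{lemmaConv}, rather than from a level comparison. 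If you adopt the Nehari-projection route to obtain strong convergence first, your two problematic steps become unnecessary, and your level comparison could then serve as an independent check on $\varrho=1$, but it is not the shortest path.
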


\begin{proof}%
\resetclaim%
The proof of Proposition~\ref{pufii} requires some preliminaries, stated in the form of claims.

\begin{claim}
\label{claimTheoremNCvgGst}
One has
\[
 \lim_{\alpha \to N} \Tilde{c}_\alpha^\gst = \kappa_{p, 1}.
\]
\end{claim}
We recall that the quantity \(\kappa_{p,1}\) is the groundstate level of the limiting problem \eqref{limit1} defined in \eqref{eqKappa}.

\begin{proofclaim}
Given $u \in H^1(\Rset^N)\setminus \{0\}$, by Lemma~\ref{lemmaConv} with $\xi_n=0$ we have, as \(\alpha \to N\),
\begin{equation*}
\begin{split}
  \Tilde{c}_{\alpha}^{\gst}
  &\leq \max_{t>0} J_{\alpha}( t u)
  = \frac{t^2}{2} \Big (\int_{\Rset^N} |\nabla u|^2 + \abs{u}^2 \Big) - \frac{t^{2p}}{2p} \int_{\Rset^N} (\Tilde{I}_\alpha \ast \abs{u}^p)\abs{u}^p  \\
  &\to \max_{t>0} \frac{t^2}{2} \Big (\int_{\Rset^N} |\nabla u|^2 + \abs{u}^2 \Big )- \frac{t^{2p}}{2p} \Big ( \int_{\Rset^N} \abs{u}^{p} \Big )^2 = \max_{t>0} \Psi_{p,1}(tu).
\end{split}
\end{equation*}
Taking the infimum with respect to \(u \in  H^1(\Rset^N)\setminus \{0\}\), we get that $\limsup_{\alpha \to N} \Tilde{c}_{\alpha}^{\gst} \leq \kappa_{p, 1}$. In particular, the groundstate solution $\Tilde{U}_\alpha$ of \eqref{cov} remains bounded in $H^1(\Rset^N)$ as $\alpha \to N$, since
\[
 \Tilde{c}_\alpha^{\gst} = \Big (\frac 1 2 - \frac{1}{2p} \Big) \Big (\int_{\Rset^N} |\nabla \Tilde{U}_\alpha|^2 + \abs{\Tilde{U}_\alpha}^2 \Big).
\]
Lemma~\ref{lemmaConvEstimate} yields
$$
  \liminf_{\alpha \to N} \Tilde{c}_{\alpha}^{\gst} =  \liminf_{\alpha \to N} \max_{t>0} J_{\alpha}(t U_\alpha) \geq   \liminf_{\alpha \to N} \max_{t>0} \Psi_{p,1}(tU_\alpha) \geq \kappa_{p,1},
$$
from which the reversed inequality follows.
\end{proofclaim}

\begin{claim}
\label{claimTheoremNBounded}
The family \((\Tilde{u}_\alpha)_{\alpha \in (\underline{\alpha}, N)}\) is bounded in \(H^1 (\Rset^N)\) as \(\alpha \to N\).


\end{claim}
\begin{proofclaim}
We observe that, by Proposition~\ref{propositionControlGroundNodal},
\[
 \Bigl(\frac{1}{2} - \frac{1}{2 p}\Bigr)
 \int_{\Rset^N} \abs{\nabla \Tilde{u}_\alpha}^2 + \abs{\Tilde{u}_\alpha}^2
 = \Tilde{J}_\alpha (\Tilde{u}_\alpha)
 \le 2 \Tilde{c}_\alpha^\gst,
\]
and the conclusion follows then from Claim~\ref{claimTheoremNCvgGst}.
\end{proofclaim}

\begin{claim} \label{claimTheoremNSignChanges}
One has
\begin{equation*}
 \liminf_{\alpha \to N} \int_{\Rset^N} \abs{\nabla \Tilde{u}_\alpha^\pm}^2 + \abs{\Tilde{u}_\alpha^\pm}^2>0 \qquad \text{and} \qquad
 \liminf_{\alpha \to N} \int_{\Rset^N} \bigl(\Tilde{I}_\alpha \ast \abs{\Tilde{u}_\alpha^{\pm}}^p\bigr) \abs{\Tilde{u}_\alpha^\pm}^p > 0.
\end{equation*}
\end{claim}

\begin{proofclaim}
We recall that by the optimal Hardy--Littlewood--Sobolev inequality
\citelist{%
  \cite{LiebLoss2001}*{Theorem 4.3}%
  \cite{Lieb1983}*{Theorem 3.1}%
} %
for all functions \(f, g \in L^\frac{2N}{N + \alpha} (\Rset^N)\), we have
\begin{equation}
\label{ineqOptimalHLSPower}
  \int_{\Rset^N} (\Tilde{I}_\alpha \ast f) g
  \le \Tilde{C}_{N, \alpha} \Bigl(\int_{\Rset^N} \abs{f}^\frac{2N}{N + \alpha} \Bigr)^\frac{N + \alpha}{2 N}
   \Bigl(\int_{\Rset^N} \abs{g}^\frac{2N}{N + \alpha} \Bigr)^\frac{N + \alpha}{2 N},
\end{equation}
with an optimal constant \(\Tilde{C}_{N, \alpha}\) that can be expressed as
\begin{equation}
\label{ineqOptimalHLSPowerConstant}
  \Tilde{C}_{N, \alpha}
  = \frac{\pi^\frac{N - \alpha}{2} \Gamma (\frac{\alpha}{2})}
          {\Gamma (\frac{N + \alpha}{2}) }
  \biggl(\frac{\Gamma (N)}{\Gamma(\frac{N}{2})}\biggr)^\frac{\alpha}{N}.
\end{equation}
By the Hardy--Littlewood--Sobolev inequality \eqref{ineqOptimalHLSPower} and
by the Sobolev inequality, we observe that
\[
\begin{split}
 \int_{\Rset^N} \abs{\nabla \Tilde{u}_\alpha^\pm}^2 + \abs{\Tilde{u}_\alpha^\pm}^2
 &= \int_{\Rset^N} \bigl(\Tilde{I}_\alpha \ast \abs{\Tilde{u}_\alpha}^p\bigr) \abs{\Tilde{u}_\alpha^\pm}^p\\
 &\le C \Tilde{C}_{N, \alpha} \Bigl(\int_{\Rset^N} \abs{\nabla \Tilde{u}_\alpha}^2 + \abs{\Tilde{u}_\alpha}^2 \Bigr)^\frac{p}{2}
 \Bigl(\int_{\Rset^N} \abs{\nabla \Tilde{u}_\alpha^\pm}^2 + \abs{\Tilde{u}_\alpha^\pm}^2 \Bigr)^\frac{p}{2}.
\end{split}
\]
so that, since \(p > 2\), we have
\[
 1 \le C \Tilde{C}_{N, \alpha}\Bigl(\int_{\Rset^N} \abs{\nabla \Tilde{u}_\alpha}^2 + \abs{\Tilde{u}_\alpha}^2 \Bigr)^\frac{p}{2}
 \Bigl(\int_{\Rset^N} \abs{\nabla \Tilde{u}_\alpha^\pm}^2 + \abs{\Tilde{u}_\alpha^\pm}^2 \Bigr)^\frac{p - 2}{2}.
\]
In view of \eqref{ineqOptimalHLSPowerConstant}, we have
\[
 \lim_{\alpha \to N} \Tilde{C}_{N, \alpha}
  = 1,
\]
so that, by Claim~\ref{claimTheoremNBounded},
\begin{equation}
 \liminf_{\alpha \to N}
 \int_{\Rset^N}\abs{\nabla \Tilde{u}_\alpha^\pm}^2 + \abs{\Tilde{u}_\alpha^\pm}^2 = \liminf_{\alpha \to N} \int_{\Rset^N} \bigl(\Tilde{I}_\alpha \ast \abs{\Tilde{u}_\alpha}^p\bigr) \abs{\Tilde{u}_\alpha^\pm}^p > 0
\end{equation}

For the second estimate, we write, by the positive definiteness of the Riesz potential energy and the Cauchy--Schwarz inequality (see \cite{LiebLoss2001}*{Theorem 9.8}),
\[
\begin{split}
 \int_{\Rset^N} &\bigl(\Tilde{I}_\alpha \ast \abs{\Tilde{u}_\alpha^+}^p\bigr) \abs{\Tilde{u}_\alpha^+}^p\\
 &= \int_{\Rset^N} \bigl(\Tilde{I}_\alpha \ast \abs{\Tilde{u}_\alpha}^p\bigr) \abs{u_\alpha^+}^p
  - \int_{\Rset^N} \bigl(\Tilde{I}_\alpha \ast \abs{\Tilde{u}_\alpha^-}^p\bigr) \abs{\Tilde{u}_\alpha^+}^p\\
 &\ge  \int_{\Rset^N} \bigl(\Tilde{I}_\alpha \ast \abs{\Tilde{u}_\alpha}^p\bigr) \abs{u_\alpha^+}^p
 - \Big ( \int_{\Rset^N} \bigl(\Tilde{I}_\alpha \ast \abs{\Tilde{u}_\alpha^+}^p\bigr) \abs{\Tilde{u}_\alpha^+}^p \int_{\Rset^N} \bigl(\Tilde{I}_\alpha \ast \abs{\Tilde{u}_\alpha^-}^p\bigr) \abs{\Tilde{u}_\alpha^-}^p \Big )^\frac{1}{2}.
\end{split}
\]
The conclusion follows then from the fact that \(u_\alpha^+ \ne 0\),from the boundedness of the family \(u_\alpha^-\) in \(H^1 (\Rset^N)\) and from the Hardy--Littlewood--Sobolev inequality \eqref{ineqOptimalHLSPower}.
\end{proofclaim}

\begin{claim}
\label{cnodlimit}
We have
\[
 \lim_{\alpha \to N} \Tilde{c}_{\alpha}^{\nod}
 = 2 \kappa_{p, 2}
 = 2^\frac{p - 2}{p - 1} \kappa_{p, 1}.
\]
Moreover, define \(t_\alpha, s_\alpha \in (0, \infty)\) such that
\begin{equation}
 t_\alpha \Tilde{u}_\alpha^+ \in \mathcal{N}_{p,2}, \qquad \text{and} \qquad s_\alpha \Tilde{u}_\alpha^- \in \mathcal{N}_{p,2}
\end{equation}
where \(\mathcal{N}_{p,2}\) is the Nehari manifold associated to the functional \(\Psi_{p, 2}\) (see \eqref{othernehari}). Then, $t_\alpha$, $s_\alpha$ are bounded and the satisfy the following asymptotics as \(\alpha \to 0\):
\begin{gather}
\label{eqRieszYoungbis}
 t_\alpha^p s_\alpha^p\int_{\Rset^N} \bigl(I_\alpha \ast \abs{\Tilde{u}_\alpha^+}^p\bigr) \abs{\Tilde{u}_{\alpha}^-}^p = \frac{t_\alpha^{2 p}}{2}\int_{\Rset^N} \bigl(I_\alpha \ast \abs{\Tilde{u}_\alpha^+}^p\bigr) \abs{\Tilde{u}_{\alpha}^+}^p +
  \frac{s_\alpha^{2 p}}{2} \int_{\Rset^N} \bigl(I_\alpha \ast \abs{\Tilde{u}_\alpha^-}^p\bigr) \abs{\Tilde{u}_{\alpha}^-}^p + o(1),\\
\label{ciao2bis}
\Psi_{p, 2} (t_\alpha \Tilde{u}_\alpha^+) \to \kappa_{p, 2}, \ \ \Psi_{p, 2} (s_\alpha \Tilde{u}_\alpha^-) \to \kappa_{p, 2}.
\end{gather}
\end{claim}

\begin{proofclaim}
We take a function \(v \in C^\infty_c (\Rset^N)\) and we choose a vector \(\xi \in \Rset^N\) such that \(\abs{\xi} > \operatorname{diam}(\operatorname{supp} v )\).
We now define the function \(u : \Rset^N \to \Rset\) for each $x \in \R^N$ by \(u (x) = v (x) - v (x - \xi)\).
In view of Lemma~\ref{lemmaConv}, we have
\[
 \int_{\Rset^N} \bigl(\Tilde{I}_\alpha \ast \abs{t u^+ + s u^-}^p\bigr)\,\abs{t u^+ + s u^-}^p
 = \Bigl(\int_{\Rset^N} \abs{t u^+ + s u^-}^p\Bigr)^2 + o (1)\, (\abs{t}^{2 p} + \abs{s}^{2 p}).
\]
It follows therefore that
\begin{equation}
\label{eqlimTildeJPsi}
 \lim_{\alpha \to N} \max\, \bigl\{ \Tilde{J}_\alpha (t u^+ + s u^-) \st t, s \in [0, \infty)\}
 \le \max\, \{ \Psi_{p, 1} (t u^+ + s u^-) \st t, s \in [0, \infty) \bigr\}.
\end{equation}
Moreover, we have  for every \(s, t \in [0, \infty)\),
\begin{equation}
\begin{split}
\label{eqPsi12}
 \Psi_{p, 1} (t u^+ + s u^-)
 &= \frac{t^2 + s^2}{2} \int_{\Rset^N} \abs{\nabla v}^2 + \abs{v}^2 - \frac{(t^{p} + s^{p})^2}{2 p}
 \Bigl(\int_{\Rset^N} \abs{v}^p \Bigr)^2\\
 &\le r^2 \int_{\Rset^N} \abs{\nabla v}^2 + \abs{v}^2 - \frac{2 r^{2p}}{p}
 \Bigl(\int_{\Rset^N} \abs{v}^p \Bigr)^2 = 2 \Psi_{p, 2} (r v),
\end{split}
\end{equation}
where \(r = \sqrt{\frac{t^2 + s^2}{2}}\). By combining \eqref{eqlimTildeJPsi} and \eqref{eqPsi12}, we get, in view of the definition of the level \(\Tilde{c}_{\alpha}^\nod\)
\[
 \limsup_{\alpha \to N} \Tilde{c}_\alpha^\nod
 \le 2 \Psi_{p, 2} (r v).
\]
Since the latter inequality holds for every \(v \in C^\infty_c (\Rset^N)\) and since the set \(C^\infty_c (\Rset^N)\) is dense in the Sobolev space \(H^1 (\Rset^N)\), we have in view of the characterization \eqref{eqCharactKappa} and of the identity \eqref{eqKappa},
\[
 \limsup_{\alpha \to N} \Tilde{c}_\alpha^\nod \le 2 \kappa_{p, 2} = 2^\frac{p - 2}{p - 1} \kappa_{p, 1}.
\]

For the reversed inequality, first observe that
\[
t_\alpha^{2(p-1)}= \tfrac{\displaystyle\int_{\RN} |\nabla \Tilde{u}_\alpha^+|^2 + |\Tilde{u}_\alpha^+|^2 }{\displaystyle 2 \Big (\int_{\RN}  |\Tilde{u}_\alpha^+|^p \Big)^2 }
\qquad \text{and}\qquad
s_\alpha^{2(p-1)}= \tfrac{\displaystyle\int_{\RN} |\nabla \Tilde{u}_\alpha^-|^2 + |\Tilde{u}_\alpha^-|^2 }{\displaystyle2 \Big (\int_{\RN}  |\Tilde{u}_\alpha^-|^p \Big)^2 }.
\]
Combining Lemma~\ref{lemmaConvEstimate} and Claim~\ref{claimTheoremNSignChanges}, we conclude that $t_\alpha$ and $s_\alpha$ remain bounded and bounded away from $0$ as \(\alpha \to N\). Then,
\begin{equation}
\label{eqFirsurysr}
 \Tilde{c}_\alpha^\nod
 = \Tilde{J}_\alpha (\Tilde{u}_\alpha)
 \ge \Tilde{J}_\alpha (t_\alpha \Tilde{u}_\alpha^+ + s_\alpha  \Tilde{u}_\alpha^-).
\end{equation}
By the positive definiteness of the Riesz potential energy and by the Cauchy--Schwarz inequality (see \cite{LiebLoss2001}*{Theorem 9.8}), we have
\begin{equation}
\label{eqRieszYoung}
\begin{split}
 t_\alpha ^p s_\alpha ^p\int_{\Rset^N} \bigl(I_\alpha \ast \abs{\Tilde{u}_\alpha^+}^p\bigr) \abs{\Tilde{u}_{\alpha}^-}^p \le  \frac{t_\alpha^{2 p}}{2}\int_{\Rset^N} \bigl(I_\alpha \ast \abs{\Tilde{u}_\alpha^+}^p\bigr) \abs{\Tilde{u}_{\alpha}^+}^p +
  \frac{s_\alpha^{2 p}}{2} \int_{\Rset^N} \bigl(I_\alpha \ast \abs{\Tilde{u}_\alpha^-}^p\bigr) \abs{\Tilde{u}_{\alpha}^-}^p.
\end{split}
\end{equation}
Therefore, in view of \eqref{eqFirsurysr}, we deduce that
\begin{equation}
\label{eqTildecalphaDecoupled}
\begin{split}
 \Tilde{c}_\alpha^\nod
 \ge {} \frac{t_\alpha^2}{2} \int_{\Rset^N} \abs{\nabla \Tilde{u}_\alpha^+}^2 &+ \abs{\Tilde{u}_\alpha}^2
 -\frac{t_\alpha^{2 p}}{p} \int_{\Rset^N} (\Tilde{I}_\alpha \ast \abs{\Tilde{u}_\alpha^+}^p) \abs{\Tilde{u}_\alpha^+}^p\\
 &+ \frac{s_\alpha^2}{2} \int_{\Rset^N} \abs{\nabla \Tilde{u}_\alpha^-}^2 + \abs{\Tilde{u}_\alpha^-}^2
 -\frac{s_\alpha^{2p}}{p} \int_{\Rset^N} (\Tilde{I}_\alpha \ast \abs{\Tilde{u}_\alpha^-}^p) \abs{\Tilde{u}_\alpha^-}^p.
\end{split}
\end{equation}

By Lemma~\ref{lemmaConvEstimate}, we have, as \(\alpha \to N\),
\begin{equation} \label{ciao}
 \int_{\Rset^N} \bigl(\Tilde{I}_\alpha \ast \abs{\Tilde{u}_\alpha^\pm}^p\bigr) \abs{\Tilde{u}_\alpha^\pm}^p
 \le \Bigl(\int_{\Rset^N} \abs{\Tilde{u}_\alpha^\pm}^p\Bigr)^2 + O (N - \alpha).
\end{equation}
In view of \eqref{eqTildecalphaDecoupled} this leads us to
\begin{equation} \label{ciao2}
\Tilde{c}_\alpha^\nod
  \ge \Psi_{p, 2} (t_\alpha \Tilde{u}_\alpha^+) + \Psi_{p, 2} (s_\alpha \Tilde{u}_\alpha^-)
  + O (N - \alpha).
\end{equation}
By the characterization \eqref{eqCharactKappa} and by the identity \eqref{eqKappa}, it follows that
\[
 \liminf_{\alpha \to N} \Tilde{c}_\alpha^\nod
 \ge 2 \kappa_{p, 2} = 2^\frac{p - 2}{p - 1} \kappa_{p, 1},
\]
which proves the first part of the claim.

As a byproduct, the inequalities \eqref{eqRieszYoung} and \eqref{ciao2} become equalities in the limit $\alpha \to N$; this gives \eqref{eqRieszYoungbis} and \eqref{ciao2bis}.
\end{proofclaim}

\bigskip We are now in conditions to prove Proposition~\ref{pufii}. First, let us show that $t_{\alpha} \to 1$ and $s_{\alpha} \to 1$ as \(\alpha \to N\).
In view of  Claim~\ref{claimTheoremNBounded} and Lemma~\ref{lemmaConvEstimate}, and by using the positive definiteness of the Riesz potential energy and the Cauchy--Schwarz inequality (see \cite{LiebLoss2001}*{Theorem 9.8}) we have: \begin{equation*}
\begin{split}
\Tilde{c}_\alpha^\nod = \Tilde{J}_\alpha (\Tilde{u}_\alpha^+ + \Tilde{u}_\alpha^-)
 &\ge \frac{1}{2} \int_{\Rset^N} \abs{\nabla u_\alpha^+}^2 + \abs{u_\alpha^+}^2
  - \frac{1}{p}\int_{\Rset^N} \bigl( I_\alpha \ast \abs{u_\alpha^+}^p\bigr)\abs{u_\alpha^+}^p\\
  &\qquad
   +\frac{1}{2} \int_{\Rset^N} \abs{\nabla u_\alpha^-}^2 + \abs{u_\alpha^-}^2
  - \frac{1}{p}\int_{\Rset^N} \bigl( I_\alpha \ast \abs{u_\alpha^-}^p\bigr)\abs{u_\alpha^-}^p\\
 &\ge \Psi_{p, 2} (\Tilde{u}_\alpha^+) + \Psi_{p, 2} (\Tilde{u}_\alpha^-) + o (1).
\end{split}
\end{equation*}
By Claim~\ref{cnodlimit}, we have\begin{equation*}
\begin{split}
 2 \kappa_{p, 2} &\ge \Psi_{p, 2} (t_\alpha \Tilde{u}_\alpha^+) + \Psi_{p, 2} (s_\alpha \Tilde{u}_\alpha^-)\\
 &\qquad+ \Psi_{p, 2} (\Tilde{u}_\alpha^+) - \Psi_{p, 2} (t_\alpha \Tilde{u}_\alpha^+) + \Psi_{p, 2} (\Tilde{u}_\alpha^-) - \Psi_{p, 2} (s_\alpha \Tilde{u}_\alpha^-) + o (1)\\
 &\ge  2 \kappa_{p, 2} +
 \frac{1}{2}\biggl(1 - \Bigl(1 - \frac{1}{p}\Bigr)t_\alpha^2 - \frac{1}{p t_\alpha^{2 p - 2}} \biggr)
 \int_{\Rset^N} \abs{\nabla \Tilde{u}_\alpha^+}^2 + \abs{\Tilde{u}_\alpha^+}^2 \\
 &\qquad+
 \frac{1}{2}\biggl(1 - \Bigl(1 - \frac{1}{p}\Bigr)s_\alpha^2 - \frac{1}{p s_\alpha^{2 p - 2}} \biggr)
 \int_{\Rset^N} \abs{\nabla \Tilde{u}_\alpha^-}^2 + \abs{\Tilde{u}_\alpha^-}^2 + o (1)
\end{split}
\end{equation*}
Since the integrals on the right-hand side remain bounded away from \(0\) (Claim~\ref{claimTheoremNSignChanges}), we have
\begin{align*}
 \lim_{n \to \infty} 1 - \Bigl(1 - \frac{1}{p}\Bigr)t_\alpha^2 - \frac{1}{p t_\alpha^{2 p - 2}} &= 0,&
 &\text{and}&
 \lim_{n \to \infty} 1 - \Bigl(1 - \frac{1}{p}\Bigr)s_\alpha^2 - \frac{1}{p s_\alpha^{2 p - 2}} & = 0.
\end{align*}
By Young's inequality, we have for each \(\tau \in (0, \infty)\),
\[
  1 \le \frac{1}{p} \frac{1}{\tau^{2 p - 2}} + \Bigl(1 - \frac{1}{p}\Bigr) \tau^2,
\]
Therefore the function \( \theta : (0, \infty) \to \Rset\) defined for every \(\tau \in (0, \infty)\)
\[
 \theta (\tau) = 1 - \Bigl(1 - \frac{1}{p}\Bigr)\tau^2 - \frac{1}{p \tau^{2 p - 2}},
\]
is nonnegative and \(\theta (\tau) = 0\) if and only if \(\tau = 1\).
Since we have \(\lim_{\tau \to 0} \theta (\tau) = \infty\) and \(\lim_{\tau \to \infty} \theta (\tau) = \infty\), we conclude that $t_\alpha \to 1$ and  $s_\alpha \to 1$ as \(\alpha \to N\).

By \eqref{ciao2bis}, the families $t_\alpha u_\alpha^+$, $s_\alpha u_\alpha^-$ minimize the functional $\Psi_{p,2}$ restricted to its Nehari manifold $\mathcal{N}_{p,2}$ (as \(\alpha \to N\) ). Lemma~\ref{salut} implies the existence of vectors $\Tilde{\xi}_\alpha^+$, $\Tilde{\xi}_\alpha^- \in \R^N$ such that
\begin{equation} \label{solounavez}
  u_{\alpha}^+ - V(\cdot - \Tilde{\xi}_\alpha^+) \to 0,\quad
  u_{\alpha}^- - V(\cdot - \Tilde{\xi}_\alpha^-) \to 0 \quad \text{in \(H^1 (\Rset^N)\)},
\end{equation}
as \(\alpha \to N\)
where $V=V_2$ is the groundstate of problem \eqref{limit1} for $\mu=2$. In particular,

\[
  u_{\alpha} - (V(\cdot - \Tilde{\xi}_\alpha^+) - V(\cdot - \Tilde{\xi}_\alpha^-) ) \to 0 \quad \text{in \(H^1 (\Rset^N)\)}.
\]

If the sequence $\Tilde{\xi}_\alpha^+ - \Tilde{\xi}_\alpha^-$ were bounded, taking positive and negative part of the above expression yields a contradiction with \eqref{solounavez}. Hence $|\Tilde{\xi}_\alpha^+ - \Tilde{\xi}_\alpha^-| \to +\infty$.
Then,
\[
 \lim_{\alpha \to N} \ \ \inf_{\xi^+, \xi^- \in \Rset^N} \norm{u_{\alpha} -  (V (\cdot - \xi^+) - V (\cdot - \xi^-))}_{H^1 (\Rset^N)} = 0.
\]
Since by Fatou's lemma,
\[ \liminf_{\abs{\xi^+} + \abs{\xi^-} \to +\infty} \norm{\Tilde{u}_\alpha - (V (\cdot - \xi^+) - V (\cdot - \xi^-))}_{H^1 (\Rset^N)} \ge \min\,\bigl\{\norm{\Tilde{u}_\alpha}_{H^1 (\Rset^N)}, \norm{V}_{H^1(\Rset^N)}\bigr\},
\]
the function
\[
  (\xi^+, \xi^-) \in \Rset^N \times \Rset^N \longmapsto \norm{\Tilde{u}_\alpha - (V (\cdot - \xi^+) - V (\cdot - \xi^-))}_{H^1 (\Rset^N)}
\]
attains its infimum at some pair of vectors \((\xi_{\alpha}^+, \xi_{\alpha}^-) \in \Rset^N \times \Rset^N\) for $\alpha$ sufficiently close to \(N\). As in Section \ref{sectionProofTheoremMain0} we can conclude that $| \xi_\alpha^{\pm} - \tilde{\xi}_\alpha^\pm| \to 0$; in particular, $|{\xi}_\alpha^+ - {\xi}_\alpha^-| \to +\infty$.

\medskip Finally, we prove that \(\abs{\xi_\alpha^+ - \xi_\alpha^-}^{N-\alpha} \to 1\) as \(\alpha \to N\). By \eqref{eqRieszYoungbis},
$$
  \int_{\Rset^N}  \bigl(I_\alpha \ast \abs{\Tilde{u}_\alpha^+}^p\bigr) \abs{\Tilde{u}_{\alpha}^-}^p \to \frac{1}{2}\int_{\Rset^N} \bigl(I_\alpha \ast \abs{\Tilde{u}_\alpha^+}^p\bigr) \abs{\Tilde{u}_{\alpha}^+}^p +
  \frac{1}{2} \int_{\Rset^N} \bigl(I_\alpha \ast \abs{\Tilde{u}_\alpha^-}^p\bigr) \abs{\Tilde{u}_{\alpha}^-}^p.
$$
But,
\begin{align*} \int_{\Rset^N}  \bigl(I_\alpha \ast \abs{\Tilde{u}_\alpha^+}^p\bigr) \abs{\Tilde{u}_{\alpha}^-}^p = \int_{\Rset^N}  (I_\alpha \ast V(\cdot- \xi_\alpha^+)^p) V(\cdot- \xi_\alpha^-)^p + o(1) \\ =\int_{\Rset^N}  (I_\alpha \ast V(\cdot- (\xi_\alpha^+ - \xi_\alpha^-)^p) V^p + o(1),\end{align*}
where in the last equality we have just made a change of variables. Ana6logously,
\[
  \int_{\Rset^N} \bigl(I_\alpha \ast \abs{\Tilde{u}_\alpha^\pm}^p\bigr) \abs{\Tilde{u}_{\alpha}^\pm}^p
  = \int_{\Rset^N}  \bigl(I_\alpha \ast V(\cdot- \xi_\alpha^\pm)^p\bigr)\, V(\cdot- \xi_\alpha^\pm)^p + o(1)
  = \int_{\Rset^N}  \bigl(I_\alpha \ast V^p\bigr)\, V^p + o(1).
\]
Lemma~\ref{lemmaConv} implies that $\abs{\xi_\alpha^+ - \xi_\alpha^-}^{N - \alpha} \to 1$, concluding the proof.
\end{proof}

\resetstep

\begin{proof}[Proof of Theorem~\ref{theoremMainN}]
With Proposition~\ref{pufii} in hand, we follow the same ideas used to prove Theorem~\ref{theoremMain0}. Also here we can assume without loss of generality that \(\xi_{\alpha}^+ = 0\) and \(\xi_{\alpha}^- = \xi_{\alpha}= (m_\alpha, 0, \dotsc, 0)\).

By Proposition~\ref{pufii}, we have:
\begin{align}
\label{key_estimate2}
  \lim_{\alpha \to N} m_\alpha = \lim_{\alpha \to N} \abs{\xi_\alpha}  &= \infty,&
  &\text{ and }&
  \lim_{\alpha \to N} m_\alpha^{N-\alpha} = \lim_{\alpha \to N} \abs{\xi_{\alpha}}^{N-\alpha} &= 1.
\end{align}
Again we define then \(R_\alpha\) to be the orthogonal reflection of \(\Rset^N\) that sends \(0\) to \(\xi_\alpha\),
that is for each \(x  \in \Rset^N\)
\[
 R_{\alpha} (x) = (m_\alpha -x_1, x_2, \dotsc, x_N).
\]
We also define the functions \(\Breve{u}_\alpha = \Tilde{u}_\alpha \circ R_\alpha\), and \(v_\alpha = \Tilde{u}_\alpha + \Breve{u}_\alpha\), and the half-space
\[
 \Omega_\alpha = \bigl\{ x \in \Rset^N \st \xi_\alpha \cdot x  < \abs{\xi_\alpha}^2/2\bigr\}.
\]
By construction, \(v_\alpha \circ R_\alpha = v_\alpha\), and
thus the function \(v_\alpha\) is even with respect to \(\partial \Omega_\alpha\).

By Proposition~\ref{pufii}, we have \(v_\alpha \to 0\) in \(H^1(\Rset^N)\) as \(\alpha \to N\).
We will show that for \(\alpha\) sufficiently close to \(N\), we have \(v_\alpha=0\), that is, the solution \(\Tilde{u}_\alpha\) has an odd reflection symmetry with respect to the hyperplane \(\partial \Omega_\alpha\).

\begin{step}
\label{claimTheoremNOrthogonality}
For every direction \(\zeta \in \Rset^N\),
\begin{align*}
  \scalprod{v_\alpha}{\partial_\zeta V}_{H^1 (\Rset^N)} & = 0,&
  &\text{ and }&
  \scalprod{v_\alpha}{\partial_\zeta V}_{H^1 (\Omega_\alpha)} & = o \bigl(\norm{v_\alpha}_{H^1 (\Omega_\alpha)}\big).
\end{align*}
\end{step}

The proof is the same as Step 1 in Theorem~\ref{theoremMain0}.

\begin{step}
The function \(v_\alpha\) satisfies the linear equation
\begin{equation} \label{extra}
 \mathcal{L}_\alpha v_\alpha = 0 \qquad \text{in \(\Rset^N\)},
\end{equation}
where the linear differential operator \(\mathcal{L}_\alpha\) is defined by
\[
  \mathcal{L}_\alpha v = -\Delta v + v - (\Tilde{I}_\alpha \ast (G_\alpha v)) H_\alpha
 - (\Tilde{I}_\alpha \ast K_\alpha ) L_\alpha v,
\]
with
\begin{align*}
 G_\alpha
 &=  \begin{cases}
   \frac{\abs{\Tilde{u}_\alpha}^p - \abs{\Breve{u}_\alpha}^p}{\Tilde{u}_\alpha + \Breve{u}_\alpha}& \text{where \(\Tilde{u}_\alpha \ne - \Breve{u}_\alpha\)},\\
   p \abs{\Tilde{u}_\alpha}^{p - 2} \Tilde{u}_\alpha & \text{elsewhere},
  \end{cases}\\
 H_\alpha
 &=\frac{1}{2} \bigl(\abs{\Tilde{u}_\alpha}^{p - 2} \Tilde{u}_\alpha
 - \abs{\Breve{u}_\alpha}^{p - 2} \Breve{u}_\alpha\bigr),\\
  K_\alpha &=\frac{1}{2} (\abs{\Tilde{u}_\alpha}^p + \abs{\Breve{u}_\alpha}^p),\\
  L_\alpha &= \begin{cases}
   \frac{\abs{\Tilde{u}_\alpha}^{p - 2} \Tilde{u}_\alpha
 + \abs{\Breve{u}_\alpha}^{p - 2} \Breve{u}_\alpha}{\Tilde{u}_\alpha + \Breve{u}_\alpha} & \text{where \(\Tilde{u}_\alpha \ne -\Breve{u}_\alpha\)},\\
   (p - 1) \abs{\Tilde{u}_\alpha}^{p - 2} & \text{elsewhere}.
  \end{cases}
\end{align*}
\end{step}

Again, the proof is identical to that of Step 2 of Theorem~\ref{theoremMain0}.

\medskip

\begin{step} Conclusion of the proof of Theorem~\ref{theoremMainN}.
\end{step}

The idea here is also very closely related to that of Theorem~\ref{theoremMain0}; the main difference is in the way one passes to the limit. As commented above, Theorem~\ref{theoremMainN} follows if we show that $v_{\alpha}=0$ for $\alpha$ sufficiently close $N$. Let us assume by contradiction that there is a sequence \((\alpha_n)_{n \in \Nset}\)
in \((\underline{\alpha}, N)\), $\alpha_n \to N$ such that \(v_{\alpha_n} \ne 0\). We define for each \(n \in \Nset\) the normalized functions
\[
  w_n = \frac{v_{\alpha_n}}{\norm{v_{\alpha_n}}}.
\]
Without loss of generality, we can assume that the sequence \((w_n)_{n \in \Nset}\) converges weakly in \(H^1(\Rset^N)\) to some function \(w \in H^1 (\Rset^N)\). \medskip

By Proposition~\ref{pufii}, we have that
\begin{equation}
\label{eqTheoremNCvgcs}
\begin{aligned}
 G_\alpha- p\, \bigl(V^{p-1} - V^{p-1}(\cdot - \xi_\alpha)\bigr) & \to 0
    & &\text{in } L^q(\Rset^N),
    & & \text{if } \tfrac{1}{2}-\tfrac{1}{N} \le \tfrac{p - 1}{q} \le \tfrac{1}{2}\\
 H_\alpha - \bigl(V^{p-1} - V^{p-1}(\cdot - \xi_\alpha)\bigr)  & \to 0
    & &\text{in } L^q(\Rset^N),
    & & \text{if } \tfrac{1}{2}-\tfrac{1}{N} \le \tfrac{p - 1}{q} \le \tfrac{1}{2},\\
 K_\alpha - \bigl( V^p + V^p(\cdot- \xi_\alpha)\bigr) & \to 0
     & &\text{in } L^q(\Rset^N),
    & & \text{if } \tfrac{1}{2}-\tfrac{1}{N} \le \tfrac{p}{q} \le \tfrac{1}{2},\\
 L_\alpha - (p-1) \bigl( V^{p-2} + V^{p-2}(\cdot- \xi_\alpha)\bigr)& \to 0
    & & \text{in } L^q(\Rset^N),
    & & \text{if } \tfrac{1}{2}-\tfrac{1}{N} \le \tfrac{p}{q} \le \tfrac{1}{2}.
\end{aligned}
\end{equation}

We test the equation \eqref{extra} against \(\varphi \in C_0^{\infty}(\Rset^N)\) and we obtain, in view of \eqref{eqTheoremNCvgcs},
\begin{multline}
\label{eqylan}
\int_{\Rset^N} (\nabla w_n \cdot \nabla \varphi + w_n \varphi) \\
 = p \int_{\Rset^N} \int_{\Rset^N} \frac{ \bigl(V^{p-1}(x) - V^{p-1}(x - \xi_{\alpha_n})\bigr)\, w_n(x)\, \bigl(V^{p-1}(y) - V^{p-1}(y - \xi_{\alpha_n})\bigr)\, \varphi(y)}{\abs{x - y}^{N-{\alpha_n}}}    \dif x \dif y \\  + (p-1) \int_{\Rset^N} \int_{\Rset^N} \frac{  \bigl(V^{p}(x) + V^{p}(x - \xi_{\alpha_n})\bigr)\, \bigl(V^{p-2}(y) +V^{p-2}(y - \xi_{\alpha_n})\bigr)\,w_n(y)\, \varphi(y)}{\abs{x - y}^{N-{\alpha_n}}}   \dif x \dif y\\ + o(1).
\end{multline}

We claim that
\begin{equation}
\label{last}
\int_{\Rset^N} \int_{\Rset^N} \frac{ \bigl(V^{p-1}(x) - V^{p-1}(x - \xi_{\alpha_n})\bigr)\, w_n(x)\, \bigl(V^{p-1}(y) - V^{p-1}(y - \xi_{\alpha_n})\bigr)\, \varphi(y)}{\abs{x - y}^{N-{\alpha_n}}}    \dif x \dif y \to 0.
\end{equation}
Indeed, we observe that $V^{p-1} w_n \to V^{p-1} w$  and $(V^{p-1}(y) - V^{p-1}(y - \xi_{\alpha_n})) \varphi(y) \to V^{p-1}(y) \varphi(y)$ in $L^q(\Rset^N)$, for $q \in [1, \frac{2N}{N-2})$. By Lemma \ref{lemmaConv},
\begin{align*} \lim_{n \to +\infty} \int_{\Rset^N} \int_{\Rset^N} \frac{ V^{p-1}(x) w_n(x) (V^{p-1}(y) - V^{p-1}(y - \xi_{\alpha_n})) \varphi(y)}{\abs{x - y}^{N-{\alpha_n}}}    \dif x \dif y \\
= \Bigl(\int_{\Rset^N}  V^{p-1} w_n\Bigr)  \Bigl(\int_{\Rset^N}
V^{p-1} \varphi \Bigr). \end{align*}

Moreover, by the evenness of \(w_n\) with respect to \(\partial \Omega_\alpha\), and since \(R_\alpha (\xi_\alpha) = 0\), we have \(\abs{R_\alpha (z) - \xi_{\alpha}} = \abs{z}\). Recalling that \(V\) is radially symmetric, we have by changes of variable \(\check{x}=R_\alpha(x)\) and \(\check{y}= R_\alpha (y)\),
\begin{multline*}
 \int_{\Rset^N} \int_{\Rset^N} \frac{ V^{p-1}(x-\xi_{\alpha_n})\, w_n(x)\, \bigl(V^{p-1}(y) - V^{p-1}(y - \xi_{\alpha_n})\bigr)\, \varphi(y)}{\abs{x - y}^{N-{\alpha_n}}}    \dif x \dif y\\
 =\int_{\Rset^N} \int_{\Rset^N} \frac{ V^{p-1}(\check{x})\, w_n(\check{x})\,\bigl(V^{p-1}(\check{y} - \xi_{\alpha_n}) - V^{p-1}(\check{y})\bigr)\, \check{\varphi}(\check{y} - \xi_{\alpha_n})}{\abs{\check{x} - \check{y}}^{N-{\alpha_n}}}    \dif \check{x} \dif \check{y},
\end{multline*}
where \(\check{\varphi} (y_1, y_2, \dotsc, y_N) = \varphi (-y_1, y_2, \dotsc, y_N)\).
Again by Lemma \ref{lemmaConv} and by the radial symmetry of \(V\),
\begin{align*}
\lim_{n \to +\infty} \int_{\Rset^N} \int_{\Rset^N} \frac{ V^{p-1}(x-\xi_{\alpha_n})\, w_n(x) \,(V^{p-1}(y) - V^{p-1}(y - \xi_{\alpha_n})) \varphi(y)}{\abs{x - y}^{N-{\alpha_n}}}    \dif x \dif y \\ = \Bigl(\int_{\Rset^N}  V^{p-1} w_n\Bigr) \Bigl( \int_{\Rset^N}
V^{p-1} \check{\varphi} \Bigr)
=\Bigl(\int_{\Rset^N}  V^{p-1} w_n\Bigr) \Bigl( \int_{\Rset^N}
V^{p-1} \varphi \Bigr),
\end{align*}
Hence \eqref{last} follows.

\medskip Reasoning analogously and recalling that $\varphi$ has compact support, the second term in the right-hand side of \eqref{eqylan} converges to
\[
  2 (p-1) \Bigl(\int_{\Rset^N}  V^{p}\Bigr)  \Bigl(\int_{\Rset^N}V^{p-2} w  \varphi\Bigr).
\]
We conclude that $w$ is a (weak) solution of
\[
 -\Delta w + w = 2 (p - 1) \norm{V}_{L^p (\Rset^N)}^p V^{p - 2} w.
\]
By Step~\ref{claimTheoremNOrthogonality} and the nondegeneracy of \eqref{limit0} (recall that $V=V_2$ is a groundstate solution of \eqref{limit1} for $\mu=2$), we have \(w = 0\).

We now multiply the equation \eqref{extra} by the function $v_{{\alpha_n}}$, integrate and divide by $\norm{v_{{\alpha_n}}}_{H^1 (\Rset^N)}^2$, to obtain:
\begin{equation*}
\begin{split}
 1 =&  \int_{\Rset^N} \abs{\nabla w_n}^2 + \abs{w_n}^2 = \int_{\Rset^N} \bigl(I_{\alpha_n} \ast (G_{\alpha_n} w_n)\bigr) H_{\alpha_n} w_n
 + (I_{\alpha_n} \ast K_{\alpha_n}) L_{\alpha_n} w_n^2 + o(1) \\
 = &\,   p \int_{\Rset^N} \int_{\Rset^N} \frac{  \bigl(V^{p-1}(x) - V^{p-1}(x - \xi_{\alpha_n})\bigr)\, w_n(x)\, \bigl(V^{p-1}(y) - V^{p-1}(y - \xi_{\alpha_n})\bigr)\, w_n (y)}{\abs{x - y}^{N-{\alpha_n}}} \dif x \dif y \\
 & + (p-1) \int_{\Rset^N} \int_{\Rset^N} \frac{\bigl(V^{p}(x) + V^{p}(x - \xi_{\alpha_n})\bigr)\, \bigl(V^{p-2}(y) +V^{p-2}(y - \xi_{\alpha_n})\bigr)\,w^2_{n}(y) }{\abs{x - y}^{N-{\alpha_n}}}    \dif x \dif y\\ & \qquad+ o(1).
\end{split}
\end{equation*}
We argue again as in the proof of \eqref{last} to conclude that the first term in the right-hand side converges to $0$.  Again, Lemma~\ref{lemmaConv} and \eqref{key_estimate2} imply that the second term in the right-hand side converges to
\[
  4 (p-1)  \Bigl(\int_{\Rset^N}  V^{p} \Bigr)\Bigl(\int_{\Rset^N} V^{p-2} w^2 \Bigr) = 0,
\]
since $V^{p-2} w_n^2 \to V^{p-2} w^2=0$ strongly in $L^q(\Rset^N)$, for $q \in [1, \frac{N}{N-2})$. This yields the desired contradiction and concludes the proof of Theorem~\ref{theoremMainN}.
\end{proof}

\begin{bibdiv}

\begin{biblist}

\bib{bahri-lions}{article}{
    author = {Bahri, Abbas },
    author={Lions, Pierre-Louis },
     title = {On the existence of a positive solution of semilinear elliptic
              equations in unbounded domains},
   journal = {Ann. Inst. H. Poincar\'e Anal. Non Lin\'eaire},
    volume = {14},
      year = {1997},
    number = {3},
     pages = {365--413},
}

\bib{BenciCerami1987}{article}{
   author={Benci, Vieri},
   author={Cerami, Giovanna},
   title={Positive solutions of some nonlinear elliptic problems in exterior
   domains},
   journal={Arch. Rational Mech. Anal.},
   volume={99},
   date={1987},
   number={4},
   pages={283--300},
   issn={0003-9527},
}


\bib{CastroCossioNeuberger1997}{article}{
   author={Castro, Alfonso},
   author={Cossio, Jorge},
   author={Neuberger, John M.},
   title={A sign-changing solution for a superlinear Dirichlet problem},
   journal={Rocky Mountain J. Math.},
   volume={27},
   date={1997},
   number={4},
   pages={1041--1053},
   issn={0035-7596},
}

\bib{CastorCossioNeuberger1998}{article}{
   author={Castro, Alfonso},
   author={Cossio, Jorge},
   author={Neuberger, John M.},
   title={A minmax principle, index of the critical point, and existence of
   sign-changing solutions to elliptic boundary value problems},
   journal={Electron. J. Differential Equations},
   volume={1998},
   date={1998},
   number={2},
   pages={18},
   issn={1072-6691},
}

\bib{CeramiSoliminiStruwe1986}{article}{
   author={Cerami, G.},
   author={Solimini, S.},
   author={Struwe, M.},
   title={Some existence results for superlinear elliptic boundary value
   problems involving critical exponents},
   journal={J. Funct. Anal.},
   volume={69},
   date={1986},
   number={3},
   pages={289--306},
   issn={0022-1236},
}

    \bib{CingolaniClappSecchi2012}{article}{
      author={Cingolani, Silvia},
      author={Clapp, M{\'o}nica},
      author={Secchi, Simone},
      title={Multiple solutions to a magnetic nonlinear Choquard equation},
      journal={Z. Angew. Math. Phys.},
      volume={63},
      date={2012},
      number={2},
      pages={233--248},
      issn={0044-2275},
    }

    \bib{CingolaniSecchi}{article}{
   author={Cingolani, Silvia},
   author={Secchi, Simone},
   title={Multiple \(\mathbb{S}^{1}\)-orbits for the Schr\"odinger-Newton system},
   journal={Differential and Integral Equations},
   volume={26},
   number={9/10},
   pages={867--884},
   date={2013},
}

\bib{ClappSalazar}{article}{
   author={Clapp, M{\'o}nica},
   author={Salazar, Dora},
   title={Positive and sign changing solutions to a nonlinear Choquard
   equation},
   journal={J. Math. Anal. Appl.},
   volume={407},
   date={2013},
   number={1},
   pages={1--15},
   issn={0022-247X},
}

\bib{Diosi1984}{article}{
   title={Gravitation and quantum-mechanical localization of macro-objects},
   author={Di\'osi, L.},
   journal={Phys. Lett. A},
   volume={105},
   number={4--5},
   date={1984},
   pages={199--202},
}

\bib{GhimentiMorozVanSchaftingen}{article}{
  author = {Ghimenti, Marco},
  author = {Moroz, Vitaly},
  author= {Van Schaftingen, Jean},
  title = {Least action nodal solutions for the quadratic Choquard equation},
  journal={to appear in Proc. Amer. Math. Soc.}
  eprint = {arXiv:1511.04779},
}

\bib{GhimentiVanSchaftingen}{article}{
  author = {Ghimenti, Marco},
  author = {Van Schaftingen, Jean},
  title = {Nodal solutions for the Choquard equation},
  journal={J. Funct. Anal.},
  volume={271},
  date={2016},
  number={1},
  pages={107--135},
}

\bib{Jones1995}{article}{
  title={Newtonian quantum gravity},
  author={Jones, K. R. W.},
  journal={Austral. J. Phys.},
  volume={48},
  number={6},
  pages={1055--1081},
  year={1995},
}

\bib{Kwong1989}{article}{
   author={Kwong, Man Kam},
   title={Uniqueness of positive solutions of $\Delta u-u+u^p=0$ in ${\bf
   R}^n$},
   journal={Arch. Rational Mech. Anal.},
   volume={105},
   date={1989},
   number={3},
   pages={243--266},
   issn={0003-9527},
}

\bib{Lieb1977}{article}{
   author={Lieb, Elliott H.},
   title={Existence and uniqueness of the minimizing solution of Choquard's
   nonlinear equation},
   journal={Studies in Appl. Math.},
   volume={57},
   date={1976/77},
   number={2},
   pages={93--105},
}

\bib{Lieb1983}{article}{
   author={Lieb, Elliott H.},
   title={Sharp constants in the Hardy-Littlewood-Sobolev and related
   inequalities},
   journal={Ann. of Math. (2)},
   volume={118},
   date={1983},
   number={2},
   pages={349--374},
   issn={0003-486X},
}

\bib{LiebLoss2001}{book}{
   author={Lieb, Elliott H.},
   author={Loss, Michael},
   title={Analysis},
   series={Graduate Studies in Mathematics},
   volume={14},
   edition={2},
   publisher={American Mathematical Society},
   place={Providence, RI},
   date={2001},
   pages={xxii+346},
   isbn={0-8218-2783-9},
}

\bib{Lions1980}{article}{
   author={Lions, P.-L.},
   title={The Choquard equation and related questions},
   journal={Nonlinear Anal.},
   volume={4},
   date={1980},
   number={6},
   pages={1063--1072},
   issn={0362-546X},
}

\bib{Menzala1980}{article}{
   author={Menzala, Gustavo Perla},
   title={On regular solutions of a nonlinear equation of Choquard's type},
   journal={Proc. Roy. Soc. Edinburgh Sect. A},
   volume={86},
   date={1980},
   number={3-4},
   pages={291--301},
   issn={0308-2105},
}

\bib{Moroz-Penrose-Tod-1998}{article}{
   author={Moroz, Irene M.},
   author={Penrose, Roger},
   author={Tod, Paul},
   title={Spherically-symmetric solutions of the Schr\"odinger-Newton
   equations},
   journal={Classical Quantum Gravity},
   volume={15},
   date={1998},
   number={9},
   pages={2733--2742},
   issn={0264-9381},
}

\bib{MVSGNLSNE}{article}{
   author={Moroz, Vitaly},
   author={Van Schaftingen, Jean},
   title={Groundstates of nonlinear Choquard equations: Existence,
   qualitative properties and decay asymptotics},
   journal={J. Funct. Anal.},
   volume={265},
   date={2013},
   number={2},
   pages={153--184},
   issn={0022-1236},
}

\bib{MVSReview}{article}
{
   author={Moroz, Vitaly},
   author={Van Schaftingen, Jean},
   title={A guide to the Choquard equation},
   eprint={arXiv:1606.02158},
}

\bib{Oh1990}{article}{
   author={Oh, Yong-Geun},
   title={On positive multi-lump bound states of nonlinear Schr\"odinger
   equations under multiple well potential},
   journal={Comm. Math. Phys.},
   volume={131},
   date={1990},
   number={2},
   pages={223--253},
   issn={0010-3616},
}

\bib{Pekar1954}{book}{
  author = {Pekar, S.I.},
  title = {Untersuchungen \"uber die Elektronentheorie der Kristalle},
  publisher = {Akademie-Verlag},
  address={Berlin},
  year = {1954},
  pages = {29-34},
}

\bib{Penrose1996}{article}{
  author = {Penrose, Roger},
  title = {On gravity's role in quantum state reduction},
  journal = {Gen. Rel. Grav.},
  volume = {28},
  year = {1996},
  number = {5},
  pages = {581--600},
}

\bib{Tod-Moroz-1999}{article}{
   author={Tod, Paul},
   author={Moroz, Irene M.},
   title={An analytical approach to the Schr\"odinger-Newton equations},
   journal={Nonlinearity},
   volume={12},
   date={1999},
   number={2},
   pages={201--216},
   issn={0951-7715},
}

\bib{Weinstein1985}{article}{
   author={Weinstein, Michael I.},
   title={Modulational stability of groundstates of nonlinear Schr\"odinger
   equations},
   journal={SIAM J. Math. Anal.},
   volume={16},
   date={1985},
   number={3},
   pages={472--491},
   issn={0036-1410},
}

    \bib{Weth2001}{thesis}{
      author={Weth, Tobias},
      title={Spectral and variational characterizations
	of solutions to semilinear eigenvalue problems},
      date={2001},
      organization={Johannes Gutenberg-Universit\"at},
      address={Mainz},
    }


\bib{Willem1996}{book}{
   author={Willem, Michel},
   title={Minimax theorems},
   series={Progress in Nonlinear Differential Equations and their
   Applications, 24},
   publisher={Birkh\"auser},
   address={Boston, Mass.},
   date={1996},
   pages={x+162},
   isbn={0-8176-3913-6},
}


\end{biblist}

\end{bibdiv}

\end{document}